\documentclass[a4paper]{article}

\usepackage[top=3cm, bottom=3cm, left=3cm, right=3cm]{geometry}
\usepackage[pdftex]{graphicx}
\usepackage{tikz}
\usetikzlibrary{shapes, arrows, arrows.meta, decorations.markings}

\usepackage{amsmath}
\usepackage{amsthm}
\usepackage{amssymb}
\usepackage{mathtools}
\usepackage[integrals]{wasysym}
\usepackage{stmaryrd}

\usepackage{array}

\usepackage{algorithm}
\usepackage{algorithmic}

\usepackage{bbm}
\usepackage[mathcal]{euscript}

\usepackage[utf8]{inputenc}
\usepackage[T1]{fontenc}
\usepackage[english]{babel}
\usepackage{cite}
\usepackage{lmodern}

\usepackage{calc}
\usepackage[inline]{enumitem}
\usepackage[normalem]{ulem}
\usepackage{url}

\usepackage[hidelinks, bookmarks, bookmarksnumbered, pdfstartview={XYZ null null 1.00}]{hyperref}

\theoremstyle{plain}
\newtheorem{theorem}{Theorem}[section]
\newtheorem{lemma}[theorem]{Lemma}
\newtheorem{corollary}[theorem]{Corollary}
\newtheorem{proposition}[theorem]{Proposition}

\theoremstyle{definition}
\newtheorem{definition}[theorem]{Definition}
\newtheorem{remark}[theorem]{Remark}

\DeclareMathOperator{\diverg}{div}

\newcommand{\suchthat}{\ifnum\currentgrouptype=16 \mathrel{}\middle|\mathrel{}\else\mid\fi}

\newcommand{\diff}{\,\mathrm{d}} 

\numberwithin{figure}{section}

\makeatletter
\def\case#1{\def\tempa{#1}\futurelet\next\case@i} 
\def\case@i{\ifx\next\bgroup\expandafter\case@ii\else\expandafter\case@end\fi} 
\def\case@ii#1{ 
  \par
  \addvspace{\medskipamount}%
  \noindent\emph{Case~\tempa: #1.}\par\nobreak\smallskip
  \@afterheading%
} 
\def\case@end{ 
  \par
  \addvspace{\medskipamount}%
  \noindent\emph{Case~\tempa.}%
}
\makeatother

\begin{document}

\setlength{\parskip}{1pt plus 1pt minus 1pt} 

\setlist[enumerate, 1]{label={\textnormal{(\alph*)}}, ref={(\alph*)}, leftmargin=0pt, itemindent=*}
\setlist[enumerate, 2]{label={\textnormal{(\roman*)}}, ref={(\roman*)}}
\setlist[description, 1]{leftmargin=0pt, itemindent=*}
\setlist[itemize, 1]{label={\textbullet}, leftmargin=0pt, itemindent=*}

\newlist{hypothesisMFG}{enumerate}{1}
\setlist[hypothesisMFG]{label={\textup{(H\arabic*)}}, ref={(H\arabic*)}, leftmargin=*, widest*=9}
\newlist{hypothesisNAG}{enumerate}{1}
\setlist[hypothesisNAG]{label={\textup{(A\arabic*)}}, ref={(A\arabic*)}, leftmargin=*, widest*=9}


\title{Mean field game model for human capital accumulation under budget constraints}

\author{Saeed Sadeghi Arjmand\thanks{Université Paris--Panthéon--Assas, Laboratoire d'économie mathématique et de microéconomie appliquée (LEMMA), 75005, Paris, France.\\ \textit{URL}: https://saeed-sadeghi-arjmand.jimdosite.com \\ \textit{Email}: saeed.ecolepolytechnique@gmail.com}}

\maketitle

\begin{abstract}
    We study a deterministic first-order mean field game model for human capital accumulation in which a continuum of agents invest in education and training to reach a certain qualification level in minimum time. The productivity of learning depends on both the individual's current level of human capital and the distribution of the population, modeling knowledge spillovers and social interactions. Learning is financed by the income generated by existing human capital, leading to a state constraint requiring the financial resources to remain nonnegative throughout the evolution. For a fixed population flow, we formulate the individual problem as a state-constrained minimum-time optimal control problem with time-dependent dynamics. We establish the existence of optimal controls and trajectories, analyze the associated value function through the Dynamic Programming Principle and a Hamilton--Jacobi equation, and characterize the optimal learning effort under suitable regularity and convexity assumptions. The optimal feedback induces a continuity equation describing the evolution of the population distribution. The resulting mean field game is formulated as a fixed-point problem for the population flow. By constructing a compact invariant set and applying Schauder's fixed-point theorem, we prove the existence of at least one equilibrium on every finite population horizon, while the individual minimum-time problem remains formulated on the infinite time horizon.

\bigskip

\noindent\textbf{Keywords:} Mean field games; human capital accumulation; minimum-time optimal control; existence of equilibria; state constraints; education and training.

\noindent\textbf{Mathematics Subject Classification 2020.} 49N80, 49L20, 49L25, 49K15, 35Q89

\end{abstract}

\tableofcontents

\section{Introduction}

Human capital accumulation is one of the central mechanisms through which individual decisions shape long-run economic development. Since the seminal contributions of Becker \cite{Becker1964Human} and Ben-Porath \cite{BenPorath1967Production}, a large economic literature has studied the accumulation of education, skills, and productive abilities as the outcome of individual investments made over time. In standard models of human capital, individuals choose educational or training investments by balancing their current costs against their expected future benefits, typically through an intertemporal utility or earnings-maximization problem. The accumulation of human capital has also been shown to generate important aggregate effects through knowledge spillovers, increasing returns, and interactions between individual productivity and the surrounding economic environment (see, e.g.\@\cite{Lucas1988Mechanics, Romer1990Endogenous, BenhabibSpiegel1994Role, Moretti2004Workers}). These mechanisms make human capital accumulation intrinsically suitable for models in which individual decisions and the aggregate distribution of skills influence one another.

In this paper, we study a model of human capital accumulation in which individuals seek to attain a prescribed qualification level as fast as possible. The objective is therefore formulated as a goal-reaching or minimum-time problem rather than as the maximization of an intertemporal utility functional. This perspective is motivated by situations in which reaching a given level of education, training, or professional qualification is a prerequisite for accessing a subsequent opportunity, such as employment, professional certification, or advanced education. The time required to reach the qualification threshold is then a natural measure of the efficiency of the individual accumulation process. The model also incorporates a financial constraint, in which learning effort is costly, while the income generated by existing human capital can be used to finance further accumulation. Consequently, the individual faces a dynamic trade-off between accelerating human-capital accumulation and preserving the financial resources necessary to sustain the learning process.

The educational environment in which individuals learn may also influence the productivity of their efforts. A large economic and empirical literature has emphasized the importance of peer effects, social interactions, knowledge spillovers, and other forms of interaction between individuals in the production of human capital (see, e.g.\@\cite{WinstonZimmerman2003Peer, Moretti2004Workers, Lyle2009Effects, HidalgoHidalgo2011Optimal, FeldZolitz2017Understanding, WuZhangWang2023}). The presence of highly skilled individuals may facilitate the transmission of knowledge, imitation, mentoring, or social learning, while the composition of the surrounding population may affect the opportunities available to individual learners. Such mechanisms suggest that the productivity of individual educational effort should not necessarily be modeled as a function of the individual's own state alone, but may instead depend on the distribution of the population with which the individual interacts. 

Mean Field Game theory (MFG for short) provides a natural framework for describing this type of interaction in large populations. MFGs were introduced independently by Lasry and Lions \cite{LasryLionen, LasryLionsfr1, LasryLionsfr2} and by Caines, Huang, and Malhamé \cite{HuangMalhame, 2HuangMalhame, Huang2003Individual}, building on earlier ideas from games with a very large number of interacting agents and anonymous or mean-field interactions, such as the works \cite{Aumann1964Markets, Aumann1974Values, Jovanovic1988Anonymous}. The fundamental idea is to replace the direct interaction between a finite but large number of players by an interaction with the statistical distribution of the population. Each individual treats the aggregate population distribution as given when solving their optimization problem, while the distribution itself is generated by the collective behavior of the population. An \emph{equilibrium} is therefore characterized by a consistency condition between the population environment used by individuals and the population distribution generated by their optimal decisions. We refer to the works \cite{Achdou2020Mean, Carmona2018ProbabilisticI, Carmona2018ProbabilisticII, CardaliaguetNotes, Cardaliaguet2019Master, Gomes2016Regularity} for general presentations of the theory and further references.

MFGs have been applied to a broad range of economic questions, including growth, inequality, labor markets, finance, industrial organization, and macroeconomic dynamics (see, e.g.\@ \cite{LasryLionsGrowth, AchdouBueraLasryLionsMoll, CarmonaEconomicApplications}). In particular, the interaction between mean field models and human capital accumulation has already been explored in several directions. In \cite{LasryLionsGrowth}, the authors proposed a mean field model of economic growth in which individuals accumulate human capital and interact through the distribution of skills. Their model illustrates how competition and the accumulation of human capital can generate aggregate growth and persistent inequality. More recently, the work~\cite{GhilliRicciZancoHumanCapital} studied an MFG model involving human capital accumulation in the context of an epidemic model, in which individual optimization and population dynamics are coupled through a mean field interaction. These works demonstrate the relevance of the MFG framework for studying the feedback between individual investment decisions and the aggregate distribution of human capital.

The present paper considers a different class of human-capital MFG models. We focus on a deterministic, first-order framework inspired by the minimal-time and goal-reaching MFG models developed in \cite{Mazanti2019Minimal, Sadeghi_Arjmand_2022, Arjmand_2022}. In our setting, the state of an individual is described by two variables, which are the current level of human capital and the financial resources available to finance education and training. Individuals choose a learning effort over time, and the resulting dynamics capture two mechanisms, including educational effort which contributes to the accumulation of human capital, while the associated expenditure must be financed by the resources generated through existing human capital. The productivity of learning depends not only on the individual's own skill level but also on the distribution of the population, reflecting the influence of peer effects, knowledge spillovers, and the overall educational environment. Consequently, the model combines a goal-reaching individual optimization problem with a mean field interaction acting through the learning technology.

The individual objective is to reach a certain qualification level in minimal time. At the same time, the available financial resources must remain nonnegative throughout the learning process, reflecting the fact that agents cannot finance educational expenditures beyond their available budget. This state constraint introduces an important difference with respect to many standard MFG models. Indeed, the set of admissible learning policies depends on the initial financial resources, since a strategy that is feasible for one individual may become infeasible for another with a lower initial budget. Consequently, the individual must simultaneously solve a minimum-time reachability problem while ensuring that the entire trajectory remains inside the viable region determined by the budget constraint. The resulting optimization problem therefore combines a goal-reaching criterion, state-constrained optimal control, and a time-dependent environment generated by the evolving population distribution.

State constraints and viability constitute a substantial area of optimal control theory. The mathematical analysis of controlled trajectories constrained to remain in a certain closed set is closely related to viability theory and tangent-cone conditions (see, e.g.\@ \cite{Aubin1991Viability, AubinFrankowska1990SetValued}). State constraints also lead to additional difficulties in the analysis of Hamilton--Jacobi equations and optimal trajectories, particularly near the boundary of the admissible region (see, e.g.\@ \cite{BardiCapuzzo, Soner1986Optimal, CannarsaFrankowska, Sadeghi_Arjmand_2022}). In minimum-time problems, the geometry of the reachable and viable sets plays a particularly important role because the value function may be finite only on a proper subset of the state space. In the present model, the viability set describes the states from which an agent can maintain a nonnegative budget indefinitely, whereas the reachable set describes the states from which the qualification threshold can be reached in finite time. The distinction between these two sets is fundamental for the formulation of the value function and for the subsequent analysis of optimal policies.

Once the population distribution is given, the individual problem is a deterministic state-constrained optimal control problem with time-dependent coefficients (see, Section~\ref{sec: Human-capital model}, control system~\ref{eq:control-system}). We establish the existence of optimal trajectories and analyze the associated value function, which depends on both time and the individual state. The time dependence is essential because the future learning environment is determined by the evolution of the population distribution. The value function satisfies a Dynamic Programming Principle (DPP for short) and is characterized, in the appropriate sense, by a time-dependent Hamilton--Jacobi equation. The resulting Hamiltonian reflects the trade-off between the marginal benefit of learning effort, through the acceleration of human-capital accumulation, and its marginal financial cost.

Under suitable regularity and convexity assumptions, we show that the optimal learning effort can be characterized through the value function associated with the individual optimization problem (see, Subsection~\ref{subsec: DPP & HJ equation}, Theorem~\ref{thm:feedback}). This characterization yields a feedback policy expressing the optimal educational effort as a function of time, the individual's state, and the surrounding population distribution. From an economic perspective, the optimal policy balances the benefit of accelerating human-capital accumulation against the financial cost of sustaining a higher learning effort. Since the productivity of learning depends on the aggregate educational environment, the same individual may optimally choose different effort levels under different population distributions. In other words, the population distribution enters this balance through the learning technology, so that the same individual state may lead to different optimal decisions in different educational environments.

The optimal feedback obtained from the individual optimization problem induces the collective evolution of the population, leading to the continuity equation in the MFG system (see, Section~\ref{sec:MFG-system}, system~\eqref{eq:MFG-system}). Once every agent follows the corresponding optimal learning policy, the distribution of human capital and financial resources evolves according to the aggregate dynamics generated by these individual decisions. In our formulation, agents remain part of the population after reaching the qualification threshold, although they no longer contribute to the minimum-time optimization problem. A suitable post-qualification behavior is therefore prescribed in order to define the population dynamics for all subsequent times. This distinction between the stopping criterion governing the individual optimization problem and the continuous evolution of the aggregate population is one of the modeling features of the present framework. The resulting MFG is a nonlinear fixed-point problem. Given a candidate population flow, denoted by \(m\), one first solves the individual optimal control problem and obtains a value function depending on this flow. The value function determines an optimal feedback, denoted by \(u^{*,m}\), which generates a closed-loop velocity field. The continuity equation then produces a new population flow through the population operator \(\mathcal T\). An equilibrium is a fixed point of such a map, i.e.\@ \(\mathcal T(m) = m\), and thus the population distribution used to determine individual learning incentives must coincide with the population distribution generated by the resulting optimal behavior.

Our main contribution is to formulate this feedback mechanism rigorously for a goal-reaching human-capital model with financial state constraints and to establish the existence of equilibria on arbitrary finite time horizons (see, Theorems~\ref{thm:feedback} and \ref{thm:existence-mfg}). More precisely, for every fixed \(T>0\), we formulate the equilibrium problem on the space of continuous population flows, denoted by \(\mathcal C_T=C\bigl([0,T];\mathcal P_1(\mathbb R^2_+)\bigr)\), which is endowed with the uniform Wasserstein topology. Under suitable assumptions guaranteeing the stability of the individual optimization problem, the continuity of the induced feedback and closed-loop dynamics with respect to the population flow, and the well-posedness of the associated continuity equation, we construct a suitable nonempty, convex, compact subset \(\mathcal M_T\subset\mathcal C_T\) that is invariant under the population map. Schauder's fixed-point theorem (see, \cite[Theorem~11.2]{Zeidler1986}) then yields the existence of at least one finite-horizon MFG equilibrium. The compactness argument relies on uniform moment estimates, uniform integrability of the first moments, and equicontinuity of the population distributions in the Wasserstein distance.

The finite-horizon formulation is also natural from the perspective of the infinite-horizon problem. The original individual objective is formulated on an infinite time horizon, but the fixed-point construction is first performed on arbitrary finite intervals. A global-in-time equilibrium could potentially be obtained by considering a sequence of finite-horizon equilibria and extracting a limit through a diagonal compactness argument. Such an extension would require uniform estimates with respect to the time horizon, as well as suitable stability properties for the value functions, optimal feedbacks, and induced population flows. We do not establish such an infinite-horizon existence theorem here, therefore the rigorous existence result of the paper is the finite-horizon equilibrium theorem.

The paper is organized as follows. Section~\ref{sec: Human-capital model} introduces the human-capital accumulation model, the state variables, the learning technology, the budget dynamics, and the qualification target. We then formulate the individual state-constrained minimum-time optimal control problem, introduce the admissible controls, and study the viability and reachable sets. The existence of optimal trajectories is established in the same framework. More precisely, we study the time-dependent value function and its qualitative properties together with the DPP and the associated Hamilton--Jacobi equation. Furthermore, Section~\ref{sec: Human-capital model} characterizes optimal controls and introduces the feedback representation of individual behavior. Section~\ref{sec:MFG-system} couples the individual optimization problem with the population continuity equation and formulates the MFG system. The existence of a finite-horizon equilibrium is established through a Schauder fixed-point argument. Finally, Section~\ref{sec: disscussion} discusses the economic interpretation of the model, the role of the financial constraint and the population interaction, and possible directions for future research, including the analysis of infinite-horizon equilibria and the study of equilibrium comparative statics.

\section{Human-capital accumulation model}\label{sec: Human-capital model}

We consider a continuum of rational agents who invest in education and training in order to accumulate human capital. The objective of each individual is to attain a prescribed qualification level as rapidly as possible. Such a framework is motivated by many practical situations in which access to employment opportunities, professional certifications, or advanced educational programs requires the acquisition of a minimum level of skills. In contrast with the standard models of economic growth, where agents maximize an intertemporal utility functional over an infinite or finite horizon, the present work adopts a \emph{goal-reaching} perspective, in which individuals seek to minimize the time required to attain a certain educational objective. The accumulation of human capital does not depend only on the effort exerted by each individual but also on the educational environment in which learning takes place. Indeed, the efficiency of learning may be enhanced by the presence of highly skilled individuals through mechanisms such as knowledge spillovers, peer interactions, mentoring, imitation, or social learning. Since every agent interacts with an entire population of learners rather than with a finite number of individuals, the appropriate mathematical framework is provided by MFG theory, which describes the strategic interactions arising in large populations of rational agents.

\subsection{Individual optimal control problem}

As is standard in the MFG framework, we begin by considering the optimization problem of a representative agent under the assumption that the evolution of the population distribution is given. More precisely, let \(m\colon[0,+\infty)\longrightarrow\mathcal P(\mathbb R_+^2)\) be a given distribution measure of Borel probability measures. Throughout this section, the population distribution is regarded as an exogenous input of the individual optimization problem. The state of an individual at time \(t\ge0\) is represented by the pair \(x(t)=\bigl(z(t),b(t)\bigr)\in\mathbb R_+^2\),
where \(z(t)\) denotes the current level of human capital (or skill) and \(b(t)\) denotes the financial resources available to finance education and training. The control variable is the learning effort \(u(t)\in[0,1]\), where \(u(t)=0\) corresponds to the absence of educational effort and \(u(t)=1\) represents the maximal admissible learning intensity.

For an initial time \(t_0\ge0\), the evolution of the state variables is governed by the controlled system
\begin{equation}\label{eq:control-system}
\left\{
\begin{aligned}
\dot z(t) &= K(m(t),z(t))\,u(t),\\
\dot b(t) &= w(z(t))-c(u(t)),\\
(z(t&_0),b(t_0)) = (z_0,b_0),
\end{aligned}
\right.
\qquad t\ge t_0.
\end{equation}
The function \(K\colon\mathcal P(\mathbb R_+^2)\times\mathbb R_+\longrightarrow\mathbb R_+\) represents the learning technology. More precisely, \(K(m,z)\) measures the productivity of educational effort for an individual whose current level of human capital is \(z\), when the population distribution is \(m\). The dependence of the dynamics on the measurable population distribution makes the individual problem non-autonomous, since the educational environment may vary over time, and the dependence of \(K\) on the population distribution constitutes the mean field interaction. It models the idea that the educational environment influences the efficiency of learning, in which a population containing a large proportion of highly qualified individuals may generate stronger knowledge spillovers, facilitate the transmission of information, or improve learning through social interactions. Consequently, the dynamics of each individual depend on the collective behavior of the entire population. The function
\(w\colon\mathbb R_+\longrightarrow\mathbb R_+\)
describes the income generated by the current level of human capital. Thus, individuals with higher qualifications receive larger financial resources, which can subsequently be reinvested in education and training. The function \(c\colon[0,1]\longrightarrow\mathbb R_+\) denotes the instantaneous cost associated with the learning effort. Increasing the learning intensity accelerates the accumulation of human capital but simultaneously raises educational expenditures.

The purpose of this section is to formulate the individual minimum-time optimal control problem associated with the dynamics \eqref{eq:control-system}. We introduce the admissible trajectories, the state constraint, the qualification target, the corresponding value function, and the notions of viability and reachability that play a central role in the subsequent analysis. The regularity assumptions required for the well-posedness of the controlled dynamics and the existence of optimal trajectories will be stated after the mathematical formulation of the problem.


The representative agent chooses a learning effort over time in order to accumulate human capital while respecting the budget constraint. The control variable is allowed to be any measurable function taking values in the compact interval \([0,1]\). For a given initial time \(t_0\ge0\), an \emph{admissible control} is a measurable function \(u\colon[t_0,+\infty)\longrightarrow[0,1]\). The set of all such measurable controls is denoted by \(\mathcal U_{t_0}\). The control \(u(t)\) represents the intensity of the educational effort exerted by the agent at time \(t\). The lower bound \(u=0\) corresponds to the absence of learning effort, whereas \(u=1\) represents the maximal admissible learning intensity. For a measurable population distribution \(m(\cdot)\), an initial pair \((t_0,x_0)\), and a control \(u\in\mathcal U_{t_0}\), the corresponding state trajectory is governed by the controlled system~\eqref{eq:control-system}. The first equation describes the accumulation of human capital. The rate of skill accumulation is proportional to the learning effort \(u(t)\), with productivity determined by the learning technology \(K\). The latter depends both on the current skill level \(z(t)\) and on the population distribution \(m(t)\), so that the same individual effort may generate different learning outcomes depending on the aggregate educational environment.

The second equation describes the evolution of the financial resources available for education and training. The agent receives an instantaneous income \(w(z(t))\), which depends on the current level of human capital, and incurs the learning expenditure \(c(u(t))\). The budget therefore evolves according to the balance between income generated by existing skills and the cost of acquiring additional skills. Whenever the control \(u\) is fixed, the corresponding trajectory is denoted by \(x_{t_0,x_0}^{u}(t) = \bigl(z_{t_0,x_0}^{u}(t),b_{t_0,x_0}^{u}(t)\bigr)\), for \(t\ge t_0\), while  the dependence on the initial time \(t_0\) and initial state \(x_0\) may be omitted when there is no ambiguity. The state space relevant for the economic problem is not merely determined by the differential equation. Since the budget variable represents resources available for education, negative values are economically meaningless. We therefore impose the state constraint
\begin{equation*}\label{eq:budget-constraint}
b_{t_0,x_0}^{u}(t)\ge0, \qquad \forall\,t\ge t_0.
\end{equation*}
This constraint has an important consequence, consisting in the fact that the admissibility of a control cannot be determined independently of the initial condition. A learning policy that is feasible for one initial budget may become infeasible for another, and even a control that is locally feasible may eventually violate the budget constraint. Accordingly, we define the state-constrained admissible control set as follows.

\begin{definition}
For every initial pair \((t_0,x_0)\in[0,+\infty)\times\mathbb R_+^2\), the set of admissible controls is
\[
\mathcal U(t_0,x_0) := \left\{u\in\mathcal U_{t_0} \suchthat b_{t_0,x_0}^{u}(t)\ge0, \quad\text{for every }t\ge t_0 \right\}.
\]
Thus, \(\mathcal U(t_0,x_0)\) contains precisely those learning policies whose associated trajectories remain in the economically meaningful region \(\mathbb R_+^2\) throughout their entire evolution. The state constraint plays a fundamental role in the analysis, since the feasibility of a control depends on the entire trajectory and not only on the control itself.
\end{definition}


The objective of the representative agent is to attain a certain qualification level as fast as possible while respecting the budget constraint. We first introduce the qualification target. Let \(z^\star>0\) denote the desired qualification threshold. The corresponding target set is defined by
\[
\Gamma :=\left\{x = (z,b)\in\mathbb R_+^2 \suchthat z\ge z^\star \right\}.
\]
The target depends only on the human-capital coordinate, and once the agent reaches the qualification threshold, the educational objective is considered to be achieved, independently of the remaining budget. For a given initial pair \((t_0,x_0)\) and a control \(u\in\mathcal U(t_0,x_0)\), the first hitting time of the target is defined by
\begin{equation*}\label{eq:hitting-time}
\tau(t_0,x_0,u) := \inf\left\{t-t_0\ge0 \suchthat x_{t_0,x_0}^{u}(t)\in\Gamma \right\},
\end{equation*}
with the convention that \(\tau(t_0,x_0,u)=+\infty\), whenever the corresponding trajectory never reaches the target set. The individual optimization problem consists therefore in minimizing the time required to reach the qualification threshold. More precisely, the value function of the minimum-time problem is defined as follows.

\begin{definition}\label{def:value_function}
For every initial pair
\((t_0,x_0)\in[0,+\infty)\times\mathbb R_+^2\), the \emph{value function} is defined by
\begin{equation*}\label{eq:value-function}
V(t_0,x_0) := \inf_{u\in\mathcal U(t_0,x_0)} \tau(t_0,x_0,u).
\end{equation*}
\end{definition}
Thus, \(V(t_0,x_0)\) represents the minimal additional amount of time required for an agent starting from state \(x_0\) at time \(t_0\) to attain the qualification threshold, while maintaining a nonnegative budget throughout the entire evolution. The dependence on the initial time is essential because the learning technology depends on the given measurable population distribution \(m(t)\). Two agents with the same current state but different initial times may therefore face different future educational environments and consequently different optimal values, with possibility of being extended to the value \(+\infty\). In other words, if no admissible control reaches the qualification threshold in finite time, then \(V(t_0,x_0)=+\infty\). Conversely, if the initial state already belongs to the target set, then
\(V(t_0,x_0)=0\), since the educational objective has already been achieved at the initial time. We now introduce the notion of an optimal trajectory.

\begin{definition}
Let \((t_0,x_0)\in[0,+\infty)\times\mathbb R_+^2\). A trajectory \(x^\star(\cdot) = x_{t_0,x_0}^{u^\star}(\cdot)\) is said to be an \emph{optimal trajectory} from \((t_0,x_0)\) if \(u^\star\in\mathcal U(t_0,x_0)\) and
\(\tau(t_0,x_0,u^\star) = V(t_0,x_0)\). In this case, the associated control \(u^\star\) is called an \emph{optimal control}.
\end{definition}
An optimal trajectory therefore reaches the qualification threshold in the shortest possible time among all admissible learning policies. The existence of such trajectories will be established later under suitable regularity assumptions on the controlled dynamics. Since the value function may take the value \(+\infty\), one naturally leads to the distinction between states from which the qualification threshold can be reached in finite time and states from which at least one admissible trajectory exists. These two notions correspond respectively to the reachable set and the viability set.

\begin{definition}
For every \(t_0\ge0\), the \emph{reachable set} at time \(t_0\) is defined by \(\mathcal R(t_0) := \{ x_0\in \mathbb R_+^2 \suchthat V(t_0,x_0)<+\infty \}\), and the \emph{viability set} is defined by \(\mathcal K(t_0) := \{x_0\in \mathbb R_+^2 \suchthat \mathcal U(t_0,x_0)\neq\emptyset \}\). Equivalently,
\[
\mathcal K(t_0) = \left\{x_0\in \mathbb R_+^2 \suchthat \exists\,u\in\mathcal U_{t_0}, \ \text{for which}\  b_{t_0,x_0}^{u}\ge 0, \ \text{for every }t\ge t_0 \right\}.
\]
\end{definition}
The reachable set consists of all initial states from which the qualification threshold can be attained in finite time while respecting the budget constraint. By contrast, the viability set contains all states from which the agent can follow at least one admissible trajectory indefinitely without violating the budget constraint. These two notions are fundamentally different, because a state may be viable, even though the qualification threshold cannot be reached from it. For instance, an agent may be able to maintain a nonnegative budget indefinitely, while accumulating human capital is too slowly to attain the desired qualification level. In general, the following inclusion is immediate.

\begin{proposition}\label{prop:RsubsetK}
For every \(t_0\ge0\), \(\mathcal R(t_0)\subseteq\mathcal K(t_0).\)
\end{proposition}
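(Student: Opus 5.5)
The inclusion follows directly from the definitions and the convention on infima over the empty set. I plan to argue by contraposition, or equivalently directly. Fix \(t_0\ge0\) and take \(x_0\in\mathcal R(t_0)\), so that \(V(t_0,x_0)<+\infty\). By Definition~\ref{def:value_function}, \(V(t_0,x_0)\) is the infimum of \(\tau(t_0,x_0,u)\) over \(u\in\mathcal U(t_0,x_0)\). The key observation is that an infimum over the empty set equals \(+\infty\), which is the standard convention, consistent with the paper's remark that \(V=+\infty\) when no admissible control reaches the target. Hence finiteness of \(V(t_0,x_0)\) forces \(\mathcal U(t_0,x_0)\neq\emptyset\).

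To avoid relying only on the convention, I would make this concrete. Since the infimum is finite, there exists \(u\in\mathcal U(t_0,x_0)\) with \(\tau(t_0,x_0,u)<V(t_0,x_0)+1<+\infty\). In particular this \(u\) is an element of \(\mathcal U(t_0,x_0)\). By the definition of the admissible set, it satisfies \(b^{u}_{t_0,x_0}(t)\ge0\) for all \(t\ge t_0\). Thus \(\mathcal U(t_0,x_0)\neq\emptyset\), which is precisely \(x_0\in\mathcal K(t_0)\).

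There is no real obstacle. The only point requiring care is that the admissible set \(\mathcal U(t_0,x_0)\) already encodes the budget constraint on the whole half-line \([t_0,+\infty)\), not merely up to the hitting time. A control that reaches \(\Gamma\) in finite time is therefore automatically viable forever. Because the definition is stated this way, no extension of the control beyond \(t_0+\tau\) (for instance by setting \(u=0\) after hitting) is needed. I would add a brief remark that, under a weaker definition constraining only up to \(\tau\), one would need \(c(0)\le w(z)\) to continue with \(u=0\). With the paper's definition, however, the inclusion is immediate.
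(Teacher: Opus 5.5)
Your proof is correct and follows essentially the same route as the paper's: finiteness of \(V(t_0,x_0)\) yields some \(u\in\mathcal U(t_0,x_0)\) with finite hitting time, so \(\mathcal U(t_0,x_0)\neq\emptyset\) and hence \(x_0\in\mathcal K(t_0)\). You are also right that no extension beyond the hitting time is needed, because the admissible set already imposes the budget constraint on all of \([t_0,+\infty)\).
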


\begin{proof}
Fix \(t_0\ge0\) and let \(x_0\in\mathcal R(t_0)\). Since \(V(t_0,x_0)<+\infty\), there exists \(u\in\mathcal U(t_0,x_0)\) such that
\(\tau(t_0,x_0,u)<+\infty\). Hence, one observes that \(\mathcal U(t_0,x_0)\neq\emptyset\), and therefore \(x_0\in\mathcal K(t_0)\). This proves \(\mathcal R(t_0)\subseteq\mathcal K(t_0)\).
\end{proof}

The converse inclusion does not hold in general. Viability guarantees only the existence of a trajectory satisfying the budget constraint, whereas reachability additionally requires that the qualification threshold be attained in finite time. Consequently, the viability set describes the feasible region generated by the state constraint, while the reachable set is the effective domain of the finite-valued part of the value function.
The distinction between these two sets plays an important role in the subsequent analysis. In particular, the DPP and the Hamilton--Jacobi equation are meaningful only on the part of the state space where the value function is finite.


We now state the standing assumptions required for the well-posedness of the controlled dynamics and the existence of trajectories. At this stage, we only impose the minimal regularity assumptions needed for these purposes. Stronger assumptions, such as monotonicity, convexity, and differentiability conditions, will be introduced later exactly where they are required, either in the current section or in section~\ref{sec:MFG-system}, for the analysis of the value function, the optimal feedback, and the MFG equilibrium,

\begin{itemize}
    \item[\rm (H1)] \textbf{Regularity of the learning technology.}
    The learning technology \(K\colon\mathcal P(\mathbb R_+^2)\times\mathbb R_+\longrightarrow\mathbb R_+\) is such that, for every measurable population flow
    $m\colon [0,+\infty)\longrightarrow\mathcal P(\mathbb R_+^2)$, the map
    $(t,z)\longmapsto K(m(t),z)$ is measurable in $t$. Moreover, there exists a constant $L_K>0$ such that
    \[
    |K(m,z_1)-K(m,z_2)| \le L_K|z_1-z_2|, \qquad \text{ for every } m\in\mathcal P(\mathbb R_+^2 )\text{ and } z_1,z_2\in\mathbb R_+.
    \]
    
    \item[\rm (H2)] \textbf{Lipschitz continuity of the income function.}
    The income function \(w\colon\mathbb R_+\longrightarrow\mathbb R_+\) is globally Lipschitz continuous. Namely, there exists a constant \(L_w>0\) such that
    \[
    |w(z_1)-w(z_2)| \le L_w|z_1-z_2|, \qquad \forall\,z_1,z_2\in\mathbb R_+.
    \]
    
    \item[\rm (H3)] \textbf{Lipschitz continuity of the effort cost.}
    The effort cost \(c\colon[0,1]\longrightarrow\mathbb R_+\)
    is Lipschitz continuous on $[0,1]$.
    
    \item[\rm (H4)] \textbf{Linear growth of the controlled vector field.}
    There exists a constant $C_f>0$, independent of the measurable population flow \(m(\cdot)\), such that the controlled vector field \begin{equation}\label{eq:compact_time_dependent_dynamics} 
    f(t,(z,b), m(t),u) = \begin{pmatrix} K(m(t),z)u \\[2mm] 
    w(z)-c(u) 
    \end{pmatrix} 
    \end{equation}
    satisfies the linear growth estimate \(|f(t, x, m(t), u)| \le C_f(1+|x|)\), for all $t\ge0$, $x\in\mathbb R_+^2$, and $u\in[0,1]$. Notice that, whenever there is no ambiguity, the dependence of \(f\) on the measure \(m(t)\), may be omitted. 
\end{itemize}

Assumptions \rm {(H1)--(H4)} are sufficient for the well-posedness of the controlled system \eqref{eq:control-system} and for the existence of admissible trajectories. 
We therefore establish the well-posedness of the controlled dynamics under these assumptions.

\begin{theorem}[Existence of optimal controls and trajectories]
\label{thm:existence}
Assume that \emph{(H1)--(H4)} hold. Let \((t_0,x_0)\in[0,+\infty)\times\mathbb R_+^2\)
be such that \(x_0\in\mathcal R(t_0)\). Then there exists an admissible control \(u^\star\in\mathcal U(t_0,x_0)\) such that \(\tau(t_0,x_0,u^\star)=V(t_0,x_0)\). Consequently, the minimum-time problem admits at least one optimal trajectory from \((t_0,x_0)\).
\end{theorem}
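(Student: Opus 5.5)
The plan is the direct method of the calculus of variations. Set \(V:=V(t_0,x_0)<+\infty\). I take a minimizing sequence \(u_n\in\mathcal U(t_0,x_0)\) with \(\tau_n:=\tau(t_0,x_0,u_n)\downarrow V\), and may assume \(\tau_n\le V+1\).

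\textbf{Normalization and compactness.} First I normalize the controls after the hitting time: I replace \(u_n\) by \(0\) on \((t_0+\tau_n,+\infty)\). This is harmless, because then \(\dot z_n=0\) and \(\dot b_n=w(z_n)\ge0\). Hence the budget stays nonnegative, the modified control is still in \(\mathcal U(t_0,x_0)\), and the hitting time is unchanged. Next I obtain compactness on \(I:=[t_0,t_0+V+1]\). By (H1)--(H2) the system \eqref{eq:control-system} is a Carathéodory system, Lipschitz in the state, so every control yields a unique absolutely continuous trajectory. By (H4) and Gronwall, \(|x_n(t)|\le(|x_0|+C_f(t-t_0))e^{C_f(t-t_0)}\). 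This bound is uniform in \(n\), and it also makes \(|\dot x_n|\) uniformly bounded on \(I\). Arzelà--Ascoli then gives a subsequence with \(x_n=(z_n,b_n)\to x=(z,b)\) uniformly on \(I\). By Banach--Alaoglu in \(L^\infty(I)\), along a further subsequence \(u_n\rightharpoonup^\ast\bar u\) and \(c(u_n)\rightharpoonup^\ast\gamma\).

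\textbf{Identifying the limit.} The \(z\)-equation is linear in the control. Since \(K(m(t),z_n(t))\to K(m(t),z(t))\) uniformly on \(I\) by (H1), passing to the limit in the integral form gives \(z(t)=z_0+\int_{t_0}^tK(m(s),z(s))\bar u(s)\,ds\). Similarly, \(b(t)=b_0+\int_{t_0}^t\bigl(w(z(s))-\gamma(s)\bigr)ds\), and \(b\ge0\) on \(I\) as a uniform limit of nonnegative functions. For the target, \(z_n(t_0+\tau_n)\ge z^\star\) and \(\tau_n\to V\), so uniform convergence and equicontinuity give \(z(t_0+V)\ge z^\star\).

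\textbf{Main obstacle: nonconvexity of the velocity set.} The velocity set \(\{(K u,\,w-c(u)) : u\in[0,1]\}\) is a curve, not a convex set. The limit \((z,b)\) is therefore a priori only a relaxed trajectory, because \(\gamma\) need not equal \(c(\bar u)\). My remedy exploits the sign structure: \(c\) enters only the budget equation, and with a minus sign. I define \(u^\star:=\bar u\) on \([t_0,t_0+V]\) and \(u^\star:=0\) afterwards. The genuine trajectory \(x^\star\) of \(u^\star\) has the same human-capital component \(z^\star(\cdot)=z(\cdot)\) on \([t_0,t_0+V]\), since the \(z\)-equation decouples from \(b\). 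On the same interval its budget satisfies \(b^\star(t)-b(t)=\int_{t_0}^t\bigl(\gamma-c(\bar u)\bigr)ds\). This is nonnegative provided \(\gamma\ge c(\bar u)\) a.e., and that inequality holds whenever \(c\) is convex, by weak-\(\ast\) lower semicontinuity of convex integral functionals. After \(t_0+V\) the budget is nondecreasing. Hence \(u^\star\in\mathcal U(t_0,x_0)\) and \(\tau(t_0,x_0,u^\star)\le V\), so equality holds.

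\textbf{Caveat.} Under (H3) alone, which gives only Lipschitz continuity of \(c\), this step is exactly where the argument can fail. I would therefore first try to make explicit a convexity assumption on \(c\), which is natural for effort costs. Equivalently, one could assume convexity of \(f(t,x,m(t),[0,1])+\{0\}\times(-\infty,0]\), i.e.\ a Filippov-type condition, and then apply a measurable selection to produce \(u^\star\). Absent any such condition, the most one can obtain is existence in the class of relaxed controls.
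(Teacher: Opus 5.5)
Your skeleton is the paper's: minimizing sequence, Banach--Alaoglu for \(u_n\), Arzel\`a--Ascoli for \(x_n\), and closedness of \(\Gamma\) and of \(\{b\ge0\}\). The step you single out is exactly where the paper's proof breaks. The paper asserts that \(f\) is affine in the control, and that Lipschitz continuity of \(c\) plus ``standard compactness arguments'' identify \(x^\star\) as the trajectory of \(u^\star\). Because of the term \(-c(u)\) in the budget equation this is false: \(u_n\rightharpoonup^* u^\star\) does not in general give \(c(u_n)\rightharpoonup^* c(u^\star)\) unless \(c\) is affine. So your proposal does not prove the statement under (H1)--(H4). But your caveat is not a limitation of your method: the statement itself is false without a convexity condition on \(c\).

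Here is a counterexample. Take \(K\equiv1\), \(w(z)=z\), \(c(u)=2u-u^2\), \(z^\star=1\), \(t_0=0\), \(x_0=(\tfrac12,0)\). Here \(c\) is nonnegative and Lipschitz, with \(c(u)\ge u\) and equality only for \(u\in\{0,1\}\). All of (H1)--(H4) hold, and \(w\) is even nondecreasing.

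\emph{Lower bound.} For any admissible \(u\), \(0\le b(t)\le\int_0^t z\,ds-(z(t)-\tfrac12)\). Gronwall gives \(z(t)\le\tfrac12e^t\), hence \(V(0,x_0)\ge\ln2\).

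\emph{Upper bound.} Chatter on steps of length \(\delta\): first \(u=0\) for time \((1-z_k)\delta\), then \(u=1\) for time \(z_k\delta\). This keeps \(b\ge0\), gives \(z_{k+1}=(1+\delta)z_k\), and reaches \(z=1\) by time \(\delta\lceil\ln2/\ln(1+\delta)\rceil\to\ln2\). So \(V(0,x_0)=\ln2\).

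\emph{Non-attainment.} Suppose some admissible \(u\) had \(z(\ln2)=1\). Set \(\eta(t):=\int_0^tz\,ds-(z(t)-\tfrac12)=b(t)+\int_0^t(c(u)-u)\,ds\ge0\). Solving \(\dot y=\tfrac12+y-\eta\) for \(y=\int_0^tz\) gives \(z(\ln2)=1-\eta(\ln2)-\int_0^{\ln2}e^{\ln2-s}\eta(s)\,ds\), so \(\eta\equiv0\) on \([0,\ln2]\). This forces \(u\in\{0,1\}\) a.e. It also forces \(z(t)=\tfrac12e^t\), i.e.\ \(u=\tfrac12e^t\in(\tfrac12,1)\), a contradiction. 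Thus \(x_0\in\mathcal R(0)\) but no optimal control exists.

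Your repair is the right one. For convex \(c\), which (H5) provides, weak-\(*\) lower semicontinuity gives \(\gamma\ge c(\bar u)\) a.e. Since \(b\) does not feed back into the \(z\)-equation and enters only through \(b\ge0\), the genuine trajectory of \(\bar u\) has the same \(z\) and a larger budget.

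One minor point. Your truncation \(u_n:=0\) after \(t_0+\tau_n\) and the extension \(u^\star:=0\) after \(t_0+V\) both use \(c(0)\le w(z)\), e.g.\ \(c(0)=0\), which (H3) does not give; the paper makes the same tacit assumption. You can avoid it by not truncating at all:
\begin{itemize}
\item pass to the limit on the whole half-line, using weak-\(*\) convergence in \(L^\infty(t_0,\infty)\) and Arzel\`a--Ascoli on each \([t_0,t_0+N]\) with a diagonal extraction;
\item obtain \(b\ge0\) on all of \([t_0,\infty)\);
\item apply the same domination argument to \(\bar u\) there.
\end{itemize}
With this, the theorem holds under (H1)--(H4) plus convexity of \(c\).
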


\begin{proof}
Since \(x_0\in\mathcal R(t_0)\), the value function is finite, \(V(t_0,x_0)<+\infty\). Let \((u_n)_{n\in\mathbb N}\subset\mathcal U(t_0,x_0)\) be a minimizing sequence such that \(\tau(t_0,x_0,u_n)\longrightarrow V(t_0,x_0)\),
and choose \(T>t_0+V(t_0,x_0)+1\). Since \(\tau(t_0,x_0,u_n)\longrightarrow V(t_0,x_0)\), there exists \(N\in\mathbb N\) such that \(\tau(t_0,x_0,u_n)\le T-t_0\), for all \(n\ge N\). Hence, after removing finitely many terms, every trajectory reaches the target before time \(T\).
Since every control satisfies \(0\le u_n(t)\le1\) for almost every \(t\in[t_0,T]\), the sequence \((u_n)_{n\in\mathbb N}\) is bounded in \(L^\infty(t_0,T)\). By the Banach--Alaoglu theorem (see, ~\cite[Theorem 3.15]{Rudin1991}), there exist a subsequence, still denoted by \((u_n)_{n\in\mathbb N}\), and a function \(u^\star\in L^\infty(t_0,T)\) such that \(u_n\rightharpoonup u^\star\) (weakly-*) in \(L^\infty(t_0,T)\). Since the interval \([0,1]\) is convex and closed, the weak-* limit also satisfies \(0\le u^\star(t)\le1\) for almost every \(t\),  so that \(u^\star\in\mathcal U_{t_0}\).
Let \(x_n(t)=\bigl(z_n(t),b_n(t)\bigr)\) denote the trajectory associated with \(u_n\). By assumption \rm {(H4)}, there exists a constant \(C_f>0\) such that \(|f(t,x,u)| \le C_f(1+|x|)\), for every \(t\in[t_0,T]\), every \(x\in\mathbb R_+^2\), and every \(u\in[0,1]\). Hence \(|\dot x_n(t)| \le C_f(1+|x_n(t)|)\), for almost every \(t\). Grönwall's inequality yields
\(|x_n(t)| \le \bigl(|x_0|+C_fT\bigr)e^{C_fT}\) for \(t\in[t_0,T]\), so the trajectories are uniformly bounded on \([t_0,T]\). Moreover,
\[
|x_n(t)-x_n(s)| \le \int_s^t |\dot x_n(r)|\,\diff r
\le M|t-s|,
\]
for some constant \(M>0\) independent of \(n\), thus \((x_n)_{n\in\mathbb N}\) is equicontinuous on \([t_0,T]\).
By the Arzel\`a--Ascoli theorem, after extraction of a further subsequence, \(x_n\longrightarrow x^\star\) uniformly on \([t_0,T]\), for some Lipschitz continuous and thus absolutely continuous curve \(x^\star(t)=\bigl(z^\star(t),b^\star(t)\bigr)\).

From \eqref{eq:control-system} and \eqref{eq:compact_time_dependent_dynamics}, one observes that the trajectories satisfy the integral equation
\[
x_n(t) = x_0 + \int_{t_0}^{t} f(r,x_n(r),u_n(r))\,\diff r.
\]
Since \(x_n\longrightarrow x^\star\) uniformly, and \(f\) is Lipschitz continuous in the state variable by \rm {(H1)--(H3)}, we have \(f(\cdot,x_n(\cdot),u_n(\cdot)) -f(\cdot,x^\star(\cdot),u_n(\cdot)) \longrightarrow 0\) in \(L^1(t_0,T)\).  The dependence of \(f\) on the control is affine, and the weak-* convergence of \(u_n\) implies convergence of the linear term involving \(K(m(t),z^\star)\,u_n\). Since  by {\rm (H3)}, \(c\) is Lipschitz continuous on the compact interval \([0,1]\), standard compactness arguments for bounded controls yield
\[
x^\star(t) = x_0 + \int_{t_0}^{t} f(r,x^\star(r),u^\star(r))\,\diff r.
\]
Therefore, \(x^\star=x_{t_0,x_0}^{u^\star}\) is the trajectory associated with the control \(u^\star\).
For every \(n\) and \(t\in[t_0,T]\), we have \(b_n(t)\ge0\), because \(u_n\in\mathcal U(t_0,x_0)\). The uniform convergence of the trajectories gives
\(b^\star(t) = \lim_{n\to\infty} b_n(t)
\ge 0\), for \(t\in[t_0,T]\). Hence the limit trajectory satisfies the state constraint on \([t_0,T]\). After time \(T\), we extend the control by \(u^\star(t) = 0\). Since the target has already been reached before time \(T\), the value of the hitting time is unchanged. Consequently,
\(u^\star\in\mathcal U(t_0,x_0)\).

Let \(\tau^\star=\tau(t_0,x_0,u^\star)\). We claim that
\(\tau^\star \le \liminf_{n\to\infty} \tau(t_0,x_0,u_n)\). To do so, let \(\ell =\liminf_{n\to\infty} \tau(t_0,\allowbreak x_0, \allowbreak u_n)\), for which after extraction of a subsequence, we may assume \(\tau(t_0,x_0,u_n)\allowbreak \longrightarrow \ell\). For every \(n\in \mathbb N\), we observe that \(x_n\bigl(t_0+\tau(t_0,x_0,u_n)\bigr)\in\Gamma\). Since \(\Gamma\) is closed and \(x_n\longrightarrow x^\star\) uniformly, one observes
\[
x^\star(t_0+\ell) = \lim_{n\to\infty} x_n\bigl(t_0+\tau(t_0,x_0,u_n)\bigr)
\in\Gamma.
\]
Therefore, \(\tau^\star\le\ell\), which proves the lower semicontinuity.
Hence, by lower semicontinuity,
\[
\tau(t_0,x_0,u^\star) \le \liminf_{n\to\infty}
\tau(t_0,x_0,u_n) = V(t_0,x_0).
\]
On the other hand, by the definition of the value function, \(V(t_0,x_0) \le \tau(t_0,x_0,u^\star)\).
Consequently, \(\tau(t_0,x_0,u^\star) = V(t_0,x_0)\). Hence \(u^\star\) is an optimal control, and the associated trajectory \(x_{t_0,x_0}^{u^\star}\) is an optimal trajectory.
\end{proof}

The preceding theorem guarantees that, whenever the qualification threshold is reachable in finite time, the minimum-time problem admits at least one optimal control. This existence result provides the basis for the DPP and for the characterization of optimal controls developed in the next section.

\subsection{Dynamic Programming Principle and Hamilton--Jacobi equation}\label{subsec: DPP & HJ equation}

The value function introduced in the previous subsection satisfies a fundamental recursive optimality property. An agent who starts at time $t_0$ from an initial state $x_0$ and follows an admissible control for a certain period of time reaches a new state from which the same optimization problem must be solved. Since the population distribution $m(\cdot)$ is given, the continuation problem is completely determined by the new time and the new state. This recursive structure is the basis of the DPP. We first establish two elementary properties of the controlled trajectories.

\begin{lemma}[Semigroup property of trajectories]
\label{lem:semigroup}
Assume that \emph{(H1)--(H4)} hold. Let $t_0 \geq 0$, $x_0 \in \mathbb{R}_+^2$, and let $u \in \mathcal{U}(t_0,x_0)$. For $h \geq 0$, define
\(x_h = x_{t_0,x_0}^{u}(t_0+h)\), and introduce the shifted control \(u_h(t) = u(t)\), for \(t \geq t_0+h\). Then, for every $t \geq t_0+h$, \(x_{t_0,x_0}^{u}(t) = x_{\,t_0+h,x_h}^{u_h}(t)\).
\end{lemma}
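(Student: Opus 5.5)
The identity will follow from uniqueness of solutions to \eqref{eq:control-system}, since both curves solve the same Cauchy problem on $[t_0+h,+\infty)$. The first step is to establish well-posedness for a fixed measurable population flow $m(\cdot)$ and a fixed control $u\in\mathcal U_{t_0}$. The vector field $t\mapsto f(t,x,m(t),u(t))$ is measurable in $t$ by (H1), since $u$ is measurable and $c$ is continuous by (H3). It is globally Lipschitz in $x=(z,b)$ uniformly in $t$, with constant $\max(L_K,L_w)$ (up to a factor), because $|u|\le1$ and the $b$-component of $f$ does not depend on $b$; this uses (H1)--(H2). By (H4) it grows at most linearly. The Carathéodory existence and uniqueness theorem therefore gives a unique absolutely continuous solution on every compact interval $[s,T]$, and hence on $[s,+\infty)$, for any initial pair $(s,y)$. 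In particular, both $x^{u}_{t_0,x_0}$ and $x^{u_h}_{t_0+h,x_h}$ are well defined.

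Next I write the integral form. For $t\ge t_0+h$, splitting the integral defining $x^{u}_{t_0,x_0}(t)$ at $t_0+h$ gives
\[
x^{u}_{t_0,x_0}(t)=x_h+\int_{t_0+h}^{t} f\bigl(r,x^{u}_{t_0,x_0}(r),m(r),u_h(r)\bigr)\diff r ,
\]
using $u_h=u$ on $[t_0+h,+\infty)$. Thus the restriction $y(t):=x^{u}_{t_0,x_0}(t)$, $t\ge t_0+h$, is an absolutely continuous solution of the system with control $u_h$ and initial pair $(t_0+h,x_h)$. The same integral equation is satisfied by $x^{u_h}_{t_0+h,x_h}$ by definition.

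To conclude, set $e(t)=|y(t)-x^{u_h}_{t_0+h,x_h}(t)|$. The Lipschitz bound gives $e(t)\le L\int_{t_0+h}^t e(r)\diff r$, and Grönwall's inequality then yields $e\equiv0$ on every $[t_0+h,T]$, hence on all of $[t_0+h,+\infty)$. Because the two trajectories coincide, admissibility also transfers: $b^{u_h}_{t_0+h,x_h}(t)=b^u_{t_0,x_0}(t)\ge0$, so $u_h\in\mathcal U(t_0+h,x_h)$, which is the form needed later for the DPP.

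The only delicate point is the uniform Lipschitz estimate in the state. (H1) bounds $K$ only in $z$ for fixed $m$, but $m(t)$ is fixed along the trajectory, so $|K(m(t),z_1)u-K(m(t),z_2)u|\le L_K|z_1-z_2|$ holds for every $t$. This step is routine, and I do not expect a real obstacle.
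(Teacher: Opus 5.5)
Your proposal is correct and follows essentially the same route as the paper: the restriction of $x^{u}_{t_0,x_0}$ to $[t_0+h,+\infty)$ is an absolutely continuous solution of the system with control $u_h$ and initial datum $x_h$ at time $t_0+h$, so it coincides with $x^{u_h}_{t_0+h,x_h}$ by uniqueness of Carath\'eodory solutions. The differences are minor: you check the Carath\'eodory hypotheses and prove uniqueness explicitly via the Lipschitz bound and Gr\"onwall, where the paper simply cites uniqueness, and you add the observation that $u_h\in\mathcal U(t_0+h,x_h)$.
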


\begin{proof}
Define
\(y(t)=x_{t_0,x_0}^{u}(t)\) for \(t\ge t_0+h\). Since \(x_{t_0,x_0}^{u}\) is absolutely continuous, $y$ is absolutely continuous on $[t_0+h,+\infty)$, and \(y(t_0+h)=x_h\). Moreover, for almost every $t\ge t_0+h$, one observes \(\dot y(t) = f(t,y(t),u(t)) = f(t,y(t),u_h(t))\). Thus, $y$ satisfies the controlled system \eqref{eq:control-system} with initial condition $x_h$ at time $t_0+h$ and control $u_h$. By uniqueness of Carath\'eodory solutions, \(y(t)=x_{\,t_0+h,x_h}^{u_h}(t)\), which proves the result.
\end{proof}

The next lemma allows one to concatenate admissible controls, by which we can proceed to establish the DPP, associated to our optimal control problem.

\begin{lemma}[Concatenation of admissible controls]
\label{lem:concatenation}
Assume that \emph{(H1)--(H4)} hold. Let $t_0\ge0$, $x_0\in\mathcal K(t_0)$, and let $u\in\mathcal U(t_0,x_0)$. Fix $h\ge0$, and set \(x_h=x_{t_0,x_0}^{u}(t_0+h)\). If $v\in\mathcal U(t_0+h,x_h)$, then the concatenated control
\[
(u\ast_h v)(t) =
\begin{cases}
u(t), & t_0\le t\le t_0+h,\\
v(t), & t>t_0+h,
\end{cases}
\]
belongs to $\mathcal U(t_0,x_0)$.
\end{lemma}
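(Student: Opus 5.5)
Write $w=u\ast_h v$ and $y(\cdot)=x^{w}_{t_0,x_0}(\cdot)$. The plan is to show that $y$ coincides with $x^{u}_{t_0,x_0}$ on $[t_0,t_0+h]$ and with $x^{v}_{t_0+h,x_h}$ on $[t_0+h,+\infty)$. The budget constraint then follows on each piece from the admissibility of $u$ and of $v$, respectively.

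First, $w$ is measurable with values in $[0,1]$. It agrees with the measurable function $u$ on $[t_0,t_0+h]$ and with the measurable function $v$ on $(t_0+h,+\infty)$, both of which are Borel sets, so $w\in\mathcal U_{t_0}$.

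On $[t_0,t_0+h]$ we have $w=u$. The curve $x^{u}_{t_0,x_0}$ restricted to this interval is an absolutely continuous solution of \eqref{eq:control-system} with control $w$ and initial datum $x_0$. Uniqueness of Carath\'eodory solutions, which follows from the Lipschitz bounds in (H1)--(H3) via Grönwall, gives $y=x^{u}_{t_0,x_0}$ on this interval. In particular $y(t_0+h)=x_h$, and $b^{w}_{t_0,x_0}(t)=b^{u}_{t_0,x_0}(t)\ge0$ there because $u\in\mathcal U(t_0,x_0)$.

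On $[t_0+h,+\infty)$, the semigroup property (Lemma~\ref{lem:semigroup}, or the same argument as its proof) shows that $y$ restricted to this interval solves the system started at time $t_0+h$ from $y(t_0+h)=x_h$ with control $w$. This control coincides with $v$ for a.e.\ $t>t_0+h$; the single point $t_0+h$ has measure zero and does not affect the integral equation. Uniqueness then gives $y=x^{v}_{t_0+h,x_h}$ there, so the budget component is nonnegative since $v\in\mathcal U(t_0+h,x_h)$.

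Combining the two intervals gives $b^{w}_{t_0,x_0}(t)\ge0$ for all $t\ge t_0$, that is, $w\in\mathcal U(t_0,x_0)$.

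The only delicate point is uniqueness and global existence of Carath\'eodory solutions on unbounded intervals. Time measurability comes from (H1), and linear growth from (H4) rules out blow-up. Local Lipschitz continuity in $x$ is uniform in $t$ and $u$ by (H1)--(H2); note that $c$ does not depend on $x$. I expect this to be routine, and it is the same fact already used in Lemma~\ref{lem:semigroup}.
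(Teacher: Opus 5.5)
Your proposal is correct and follows the same route as the paper. Both identify the trajectory of $u\ast_h v$ with $x^{u}_{t_0,x_0}$ on $[t_0,t_0+h]$ and with $x^{v}_{t_0+h,x_h}$ afterwards, using the semigroup property and uniqueness of Carath\'eodory solutions, and then read off $b\ge0$ on each piece. Your extra checks make explicit what the paper leaves implicit: measurability of the concatenation, the irrelevance of the single point $t_0+h$, and appealing to the \emph{argument} of Lemma~\ref{lem:semigroup} rather than its statement, since the statement formally presupposes that the control is already admissible.
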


\begin{proof}
Since $u\in\mathcal U(t_0,x_0)$, the corresponding trajectory satisfies \(b_{t_0,x_0}^{u}(t)\ge 0\), for \(t\ge t_0\). Hence, we have $x_h\in\mathbb R_+^2$. Since $v\in\mathcal U(t_0+h,x_h)$, the trajectory generated by $v$ satisfies \(b_{t_0+h,x_h}^{v}(t)\ge 0\), for \(t\ge t_0+h\). By Lemma~\ref{lem:semigroup}, the trajectory generated by the concatenated control coincides with the trajectory generated by $u$ on $[t_0,t_0+h]$ and with the trajectory generated by $v$ after time $t_0+h$. Therefore, \(b_{t_0,x_0}^{u\ast_h v}(t)\ge 0\) for \(t\ge t_0\), which proves that $u\ast_h v\in\mathcal U(t_0,x_0)$.
\end{proof}

\begin{theorem}[Dynamic Programming Principle]
\label{thm:DPP}
Assume that \emph{(H1)--(H4)} hold. Let
$t_0\ge0$, $x_0\in\mathcal R(t_0)$, and $h\ge0$. Then,
\begin{equation}
\label{eq:DPP}
V(t_0,x_0) = \inf_{u\in\mathcal U(t_0,x_0)} \left\{
\theta + V\bigl(t_0+\theta, x_{t_0,x_0}^{u}(t_0+\theta)\bigr) \right\},
\end{equation}
where
\(\theta = \min\left\{ h,\, \tau(t_0,x_0,u) \right\}\).
\end{theorem}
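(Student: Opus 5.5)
We prove the two inequalities separately, writing \(W(t_0,x_0)\) for the right-hand side of \eqref{eq:DPP}. Throughout we use the additive structure of the hitting time. For \(u\in\mathcal U(t_0,x_0)\) with \(\theta=\min\{h,\tau(t_0,x_0,u)\}\), set \(x_\theta=x^u_{t_0,x_0}(t_0+\theta)\) and let \(u_\theta\) be the restriction of \(u\) to \([t_0+\theta,+\infty)\). By Lemma~\ref{lem:semigroup}, the trajectory from \((t_0+\theta,x_\theta)\) under \(u_\theta\) coincides with the original one after \(t_0+\theta\). Hence \(u_\theta\in\mathcal U(t_0+\theta,x_\theta)\), because the budget constraint is inherited.

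\textbf{Step 1: the identity \(\tau(t_0,x_0,u)=\theta+\tau(t_0+\theta,x_\theta,u_\theta)\).} We distinguish two cases.
\begin{itemize}
\item If \(\theta=\tau(t_0,x_0,u)<+\infty\), then \(x_\theta\in\Gamma\), since \(\Gamma\) is closed and the trajectory is continuous. So the second term vanishes.
\item If \(\theta=h<\tau(t_0,x_0,u)\), the trajectory has not entered \(\Gamma\) on \([t_0,t_0+h]\). The first entry time after \(t_0+h\) is therefore exactly \(\tau(t_0,x_0,u)-h\).
\end{itemize}
The identity also holds when \(\tau=+\infty\), with the usual conventions.

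\textbf{Step 2: \(V\ge W\).} Fix \(u\in\mathcal U(t_0,x_0)\). Since \(u_\theta\) is admissible from \((t_0+\theta,x_\theta)\), we have
\[
\theta+V(t_0+\theta,x_\theta)\le \theta+\tau(t_0+\theta,x_\theta,u_\theta)=\tau(t_0,x_0,u).
\]
Taking the infimum over \(u\) gives \(W\le V\). One could alternatively use an optimal control from Theorem~\ref{thm:existence}, but this is not needed.

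\textbf{Step 3: \(V\le W\).} Fix \(u\in\mathcal U(t_0,x_0)\) with \(\theta+V(t_0+\theta,x_\theta)<+\infty\); otherwise there is nothing to prove. Let \(\varepsilon>0\). If \(\theta=\tau(t_0,x_0,u)\), then \(\tau(t_0,x_0,u)=\theta+V(t_0+\theta,x_\theta)\) because \(V\) vanishes on \(\Gamma\), and we are done. Otherwise \(\theta=h\). Choose \(v\in\mathcal U(t_0+h,x_h)\) with \(\tau(t_0+h,x_h,v)\le V(t_0+h,x_h)+\varepsilon\); alternatively, take \(v\) optimal via Theorem~\ref{thm:existence}, since \(x_h\in\mathcal R(t_0+h)\). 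Lemma~\ref{lem:concatenation} gives \(u\ast_h v\in\mathcal U(t_0,x_0)\). The trajectory of \(u\ast_h v\) agrees with that of \(u\) on \([t_0,t_0+h]\), and \(\min\{h,\tau(t_0,x_0,u)\}=h\) means it does not reach \(\Gamma\) before \(t_0+h\), except possibly exactly at \(t_0+h\), which is harmless. Therefore Step 1 applied to \(u\ast_h v\) yields
\[
V(t_0,x_0)\le\tau(t_0,x_0,u\ast_h v)=h+\tau(t_0+h,x_h,v)\le h+V(t_0+h,x_h)+\varepsilon .
\]
Letting \(\varepsilon\to0\) and taking the infimum over \(u\) gives \(V\le W\).

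The main delicacy is bookkeeping rather than analysis. First, the hitting-time identity in Step 1 must be stated carefully at the boundary case \(x_h\in\Gamma\). Second, the infimum in the definition of \(V\) must be taken over controls whose restrictions and concatenations remain admissible on the whole half-line; Lemmas~\ref{lem:semigroup} and \ref{lem:concatenation} provide exactly this.
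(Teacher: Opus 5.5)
Your proof is correct and follows essentially the same route as the paper's: the case split $\tau(t_0,x_0,u)\le h$ versus $h<\tau(t_0,x_0,u)$, concatenation via Lemma~\ref{lem:concatenation} for $V\le W$, and admissibility of the continuation via Lemma~\ref{lem:semigroup} for $V\ge W$. The only differences are cosmetic. You state the additivity of the hitting time explicitly, and you prove $V\ge W$ directly for each admissible $u$, whereas the paper goes through an $\varepsilon$-optimal control.
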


\begin{proof}
Fix
$u\in\mathcal U(t_0,x_0)$, and let
\(\theta = \min\left\{h,\, \tau(t_0,x_0,u) \right\}\). We first prove the inequality
\[
V(t_0,x_0) \le \theta  + V\bigl(t_0+\theta,
x_{t_0,x_0}^{u}(t_0+\theta)\bigr).
\]
If
$\tau(t_0,x_0,u)\le h$, then $\theta=\tau(t_0,x_0,u)$, and the trajectory reaches the target at time $t_0+\theta$. Since $V(t,x)=0$ for $x\in\Gamma$, we obtain \(\theta + V\bigl(t_0+\theta, x_{t_0,x_0}^{u}(t_0+\theta)\bigr) = \tau(t_0,x_0,u)\). By definition of the value function, \(V(t_0,x_0) \le \tau(t_0,x_0,u)\). Suppose now that $h<\tau(t_0,x_0,u)$, and set \(x_h = x_{t_0,x_0}^{u}(t_0+h)\), and let $v\in\mathcal U(t_0+h,x_h)$ be arbitrary. By Lemma~\ref{lem:concatenation}, the concatenated control $u\ast_h v$ is admissible from $(t_0,x_0)$, for which, its total time to reach the target is \(h + \tau(t_0+h,x_h,v)\). Taking the infimum over all admissible continuation controls leads us to observe \(V(t_0,x_0) \le h + V(t_0+h,x_h)\). In both cases, we thus have \(V(t_0,x_0) \le \theta + V\bigl(t_0+\theta, x_{t_0,x_0}^{u}(t_0+\theta)\bigr)\), and by taking the infimum over all admissible controls, we obtain 
\[
V(t_0,x_0) \le \inf_{u\in\mathcal U(t_0,x_0)}
\left\{\theta + V\bigl(t_0+\theta, x_{t_0,x_0}^{u}(t_0+\theta)\bigr) \right\}.
\]

To prove the reverse inequality, let $\varepsilon>0$, and choose $u_\varepsilon\in\mathcal U(t_0,x_0)$ such that
\(\tau(t_0,x_0,u_\varepsilon) < V(t_0,x_0) +
\varepsilon\). Define
\(\theta_\varepsilon = \min\left\{h,\, \tau(t_0,x_0,u_\varepsilon) \right\}\). If
$\tau(t_0,x_0,u_\varepsilon)\le h$,
then
\(V\bigl(t_0+\theta_\varepsilon, x_{t_0,x_0}^{u_\varepsilon}(t_0+\theta_\varepsilon)
\bigr) = 0\), and therefore we obtain,
\(\theta_\varepsilon + V\bigl(t_0+\theta_\varepsilon,
x_{t_0,x_0}^{u_\varepsilon}(t_0+\theta_\varepsilon)
\bigr) = \tau(t_0,x_0,u_\varepsilon)\). If $h<\tau(t_0,x_0,u_\varepsilon)$, then the continuation of $u_\varepsilon$ after time $t_0+h$
is admissible from the state \(x_h = x_{t_0,x_0}^{u_\varepsilon}(t_0+h)\), so
\(\tau(t_0,x_0,u_\varepsilon) \ge
h + V(t_0+h,x_h)\). Hence, in both cases,
\(V(t_0,x_0) + \varepsilon > \theta_\varepsilon +
V\bigl(t_0+\theta_\varepsilon, x_{t_0,x_0}^{u_\varepsilon}(t_0+\theta_\varepsilon)
\bigr)\), by which, taking the infimum over admissible controls and letting $\varepsilon\longrightarrow 0$ gives
\[
V(t_0,x_0) \ge \inf_{u\in\mathcal U(t_0,x_0)}
\left\{ \theta + V\bigl(t_0+\theta, x_{t_0,x_0}^{u}(t_0+\theta)\bigr) \right\}.
\]
Hence by combining the two inequalities, we conclude \eqref{eq:DPP}.
\end{proof}


The DPP is the fundamental recursive identity satisfied by the value function. It expresses the fact that every admissible trajectory can be decomposed into an initial segment and an optimal continuation from the state reached at the end of that segment. This characterization is the starting point for the Hamilton--Jacobi analysis developed in the following, essentially by deriving the equation in the viscosity sense. More precisely, since the value function does not need to be differentiable, we use smooth test functions that touch the value function from above or below, by which we avoid imposing any unnecessary regularity assumption on the value function \(V\). This formulation is adopted to our setting as follows.

Let \((t_0,x_0)\in [0,+\infty)\times(\mathbb R_+^2\setminus\Gamma)\), and let \(h>0\) be sufficiently small so that the target is not reached during the interval \([t_0,t_0+h]\). For the viscosity subsolution property, let \(\varphi\in C^1\) be such that \(V-\varphi\) has a local maximum at \((t_0,x_0)\), with \(V(t_0,x_0)=\varphi(t_0,x_0)\). Applying the DPP over the short interval \([t_0,t_0+h]\), for any constant control \(u\in[0,1]\), gives \(V(t_0,x_0) \le h + V\bigl(t_0+h,x(t_0+h)\bigr)\), for the trajectory starting from \(x(t_0)=x_0\). Since \(V-\varphi\) has a local maximum at \((t_0,x_0)\), we have
\[
V\bigl(t_0+h,x(t_0+h)\bigr) \le \varphi\bigl(t_0+h,x(t_0+h)\bigr),
\]
for \(h\) sufficiently small, and hence, we observe \(\varphi(t_0,x_0) \le h+\varphi\bigl(t_0+h,x(t_0+h)\bigr)\). Using the dynamics~\eqref{eq:compact_time_dependent_dynamics},
\(x(t_0+h) = x_0+h f(t_0,x_0,u)+o(h)\), and the differentiability of the test function \(\varphi\), we obtain
\[
\varphi\bigl(t_0+h,x(t_0+h)\bigr) = \varphi(t_0,x_0) +h\partial_t\varphi(t_0,x_0) +h\nabla_x\varphi(t_0,x_0)\cdot f(t_0,x_0,u) +o(h).
\]
Consequently, we have 
\(0 \le 1+\partial_t\varphi(t_0,x_0) +\nabla_x\varphi(t_0,x_0)\cdot f(t_0,x_0,u) +o(1)\), for which letting \(h\longrightarrow 0^+\) and then taking the infimum over \(u\in[0,1]\) yields
\[
0\le \partial_t\varphi(t_0,x_0) + \inf_{u\in[0,1]}
\left\{1+\nabla_x\varphi(t_0,x_0)\cdot f(t_0,x_0,u)
\right\}.
\]
This is precisely the viscosity subsolution inequality for the Hamilton--Jacobi equation.

The viscosity supersolution inequality is obtained similarly by considering a smooth test function \(\varphi\in C^1\) such that \(V-\varphi\) has a local minimum at \((t_0,x_0)\), and using an optimal control in the DPP. Since the procedure is similar to viscosity subsolution, we avoid deriving this matter by details, however, we observe that
\[
0\ge \partial_t\varphi(t_0,x_0) + \inf_{u\in[0,1]}
\left\{1+\nabla_x\varphi(t_0,x_0)\cdot f(t_0,x_0,u)
\right\},
\]
holds, and consequently, we deduce that the following equation is satisfied in the viscosity sense.
\[
\partial_tV(t,x) + \inf_{u\in[0,1]} \left\{ 1+\nabla_xV(t,x)\cdot f(t,x,u) \right\} = 0,
\qquad x\in\mathbb R_+^2\setminus\Gamma.
\]
This motivates the following definition of the Hamiltonian associated with the individual optimal control problem.
\begin{definition}[Hamiltonian]\label{def: Hamiltonian}
For every $t\ge0$, $ x\in\mathbb R_+^2$, and
$p=(p_z,p_b)\in\mathbb R^2$, the Hamiltonian is defined by
\[
H(t, x, m(t), p) = \inf_{u\in[0,1]} \left\{1 + K(m(t),z)u\,p_z + \bigl(w(z)-c(u)\bigr)p_b \right\}.
\]
\end{definition}
The Hamiltonian represents the local optimization problem faced by the representative agent. The constant term $1$ is the instantaneous cost of elapsed time, while the scalar product with the controlled vector field measures the effect of the learning effort on the future value of the state. The dependence on $m(t)$ reflects the fact that the productivity of learning depends on the aggregate educational environment.
Formally, the value function therefore satisfies the Hamilton--Jacobi equation
\begin{equation}\label{eq:HJB}
\partial_tV(t,x) + H\bigl(t, x, m(t), \nabla_x V(t,x)\bigr)
= 0, \qquad x\in\mathbb R_+^2\setminus\Gamma,
\end{equation}
with the target condition \(V(t,x)=0\), for \(x\in\Gamma\). The boundary $b=0$ corresponds to the state constraint generated by the budget dynamics, and consequently the appropriate analytical framework is that of state-constrained viscosity solutions. In general, as discussed earlier, the value function is not expected to be differentiable, and equation~\eqref{eq:HJB} should therefore be interpreted in the viscosity sense. More precisely, the value function is naturally characterized in the framework of state-constrained viscosity solutions of the Hamilton--Jacobi equation, satisfying the target condition on $\Gamma$. The viscosity formulation is particularly important because the value function may fail to be smooth near the target set or near the boundary generated by the budget constraint.
The Hamiltonian which is introduced in Definition~\ref{def: Hamiltonian}, provides a direct characterization of the optimal learning effort. To obtain an explicit feedback law, we impose additional convexity and differentiability assumptions on the effort cost, to establish the optimal feedback uniquely.

\begin{itemize}
    \item[\rm (H5)]\label{ass:feedback} The effort cost function
    $c\colon[0,1]\longrightarrow\mathbb R_+$
    belongs to $C^2([0,1])$ and satisfies
    \(c''(u)>0\), for all \(u\in(0,1)\).
    In particular, $c$ is strictly convex on $[0,1]$.
\end{itemize}
Under this assumption, the minimization problem defining the Hamiltonian has a unique solution.

\begin{theorem}[Optimal feedback characterization]
\label{thm:feedback}
Assume that {\rm (H1)--(H5)} hold. Let $(t,x,p)\in[0,+\infty)\times\mathbb R_+^2\times\mathbb R^2$ and suppose that $p_b<0$. Then the minimization problem
\[
H(t, x, m(t), p) = \inf_{u\in[0,1]}\left\{1 + K(m(t),z)u\,p_z + \bigl(w(z)-c(u)\bigr)p_b \right\}
\]
admits a unique minimizer $u^{*,m}(t,x,p)\in[0,1]$.
If the minimizer is interior, i.e.\@ \(0 < u^{*,m}(t,x,p)\allowbreak <1\), then it satisfies the first-order optimality condition \(K(m(t),z)p_z - c'(u^{*,m})p_b = 0\). Equivalently,
\[
u^{*,m} = (c')^{-1} \left(\frac{K(m(t),z)p_z}{p_b} \right).
\]
More generally, the optimal feedback is obtained by projecting the unconstrained minimizer onto the admissible interval,
\[
u^{*,m}(t, x, p) = \Pi_{[0,1]}\left[(c')^{-1}\left(
\frac{K(m(t),z)p_z}{p_b}\right)\right],
\]
whenever the inverse is defined on the relevant range.
\end{theorem}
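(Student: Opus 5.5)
The plan is to fix $(t,x,p)$ with $p_b<0$ and study the scalar function
\[
g(u) := 1 + K(m(t),z)\,p_z\,u + \bigl(w(z)-c(u)\bigr)p_b, \qquad u\in[0,1],
\]
whose infimum over $[0,1]$ is $H(t,x,m(t),p)$ by Definition~\ref{def: Hamiltonian}. Since $t$, $x=(z,b)$ and $p$ are frozen, $K(m(t),z)$ and $w(z)$ are just real constants. So the statement reduces to an elementary one-dimensional convex minimization problem, and none of the control-theoretic results (Theorem~\ref{thm:existence}, Theorem~\ref{thm:DPP}) are needed.

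\emph{Existence and uniqueness.} First I rewrite
\[
g(u) = 1 + w(z)p_b + K(m(t),z)\,p_z\,u + |p_b|\,c(u),
\]
which holds because $p_b<0$. This is an affine function of $u$ plus $|p_b|\,c(u)$ with $|p_b|>0$. By (H5), $c\in C^2([0,1])$ and $c''>0$ on $(0,1)$, so $c'$ is strictly increasing on $[0,1]$ (by continuity at the endpoints) and $c$ is strictly convex. Hence $g$ is continuous and strictly convex on the compact interval $[0,1]$. Weierstrass' theorem gives a minimizer, and strict convexity gives its uniqueness; I denote it by $u^{*,m}(t,x,p)$.

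\emph{First-order condition and projection formula.} Next I differentiate:
\[
g'(u) = K(m(t),z)p_z - c'(u)p_b = |p_b|\bigl(c'(u) - q\bigr), \qquad q := \frac{K(m(t),z)p_z}{p_b}.
\]
If the minimizer is interior, then $g'(u^{*,m})=0$. This is exactly the stated condition $K p_z - c'(u^{*,m})p_b=0$. Since $c'$ is a strictly increasing continuous bijection $[0,1]\to[c'(0),c'(1)]$, it gives $u^{*,m}=(c')^{-1}(q)$.

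For the general formula I use that $g'$ is strictly increasing, with the same sign as $c'(u)-q$, and split into three cases.
\begin{enumerate}
\item If $q\le c'(0)$, then $g'\ge0$ on $[0,1]$, so $g$ is nondecreasing and $u^{*,m}=0$.
\item If $q\ge c'(1)$, then $g'\le 0$ on $[0,1]$, so $g$ is nonincreasing and $u^{*,m}=1$.
\item If $c'(0)<q<c'(1)$, there is a unique root $(c')^{-1}(q)\in(0,1)$ of $g'$, which is the global minimizer by convexity.
\end{enumerate}
These three cases are precisely $\Pi_{[0,1]}\bigl[(c')^{-1}(q)\bigr]$, once $(c')^{-1}$ is read as extended monotonically outside $[c'(0),c'(1)]$ (for instance, when $c'$ extends to a strictly increasing function on a larger domain). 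This is what the qualifier ``whenever the inverse is defined on the relevant range'' refers to.

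The only delicate point is this last interpretive step. The formula $\Pi_{[0,1]}[(c')^{-1}(q)]$ is literally meaningful only if $q$ lies in the range of some strictly increasing extension of $c'$. I would therefore state the projection as the convention above, equivalently $u^{*,m}=\Pi_{[0,1]}$ applied to the root of $c'(u)=q$ when it exists, and otherwise the endpoint dictated by the sign of $g'$. I would also note that the sign assumption $p_b<0$ is essential: without it $-c(u)p_b$ is concave and uniqueness can fail.
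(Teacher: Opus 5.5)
Your proof is correct and follows the paper's route: strict convexity of the scalar objective from $c''>0$ and $p_b<0$, hence a unique minimizer on the compact interval $[0,1]$, the first-order condition at an interior minimizer, and the nearest-endpoint (projection) description otherwise. Your three-case comparison of $q=K(m(t),z)p_z/p_b$ with $c'(0)$ and $c'(1)$ is a more careful version of the paper's appeal to an ``unconstrained minimizer outside $[0,1]$'', and it justifies the projection formula without extending $c'$ beyond $[0,1]$.
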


\begin{proof}
For a given measurable distribution \(m(\cdot)\) and a fixed $(t,x,p)$, we define \(\Phi(u) = 1 + K(m(t),z)u\,p_z + \bigl(w(z)-c(u)\bigr)p_b\), for \( u\in[0,1]\). The terms independent of $u$ do not affect the minimization. Differentiating gives
\(\Phi'(u) = K(m(t),z)\allowbreak p_z - c'(u)p_b\) and \(\Phi''(u) = -c''(u)p_b\). By assumption \rm (H5), one observes that $c''(u)>0$, and since $p_b<0$, we have $\Phi''(u)>0$ for every $u\in(0,1)$. Hence $\Phi$ is strictly convex on the compact interval $[0,1]$ and therefore admits a unique global minimizer. If the minimizer is interior, the first-order necessary condition $\Phi'(u^*)=0$ yields 
\[
c'(u^*) = \frac{K(m(t),z)p_z}{p_b}.
\]
Because $c''>0$, the derivative $c'$ is strictly increasing and therefore invertible on its image, which yields us to obtain
\[
u^* = (c')^{-1} \left(\frac{K(m(t),z)p_z}{p_b}
\right).
\]
If the unconstrained minimizer lies outside $[0,1]$, strict convexity implies that the constrained minimum is attained at the nearest endpoint, which gives the projected feedback representation, as stated in the Theorem.
\end{proof}

The feedback law, stated in the previous theorem, naturally separates the optimization problem into two regimes, as follows. 
\vspace{-5pt}
\paragraph*{Interior regime.}
When the unconstrained minimizer belongs to the interval $(0,1)$, the optimal effort balances the marginal value of faster human-capital accumulation against the marginal value of financial resources. When \(V\) is differentiable at the point under consideration, the first-order condition \(K(m(t),z)\,\partial_z V = c'(u^*)\,\partial_b V\) shows that the marginal benefit of increasing the learning effort is exactly equal to its marginal budgetary cost. In the general viscosity setting, the corresponding characterization is expressed in terms of the gradient \(p\) appearing in the Hamiltonian minimization.
\vspace{-10pt}
\paragraph*{Boundary regime.}
When the unconstrained minimizer falls outside the admissible interval, the optimal control is constrained by the physical limits of educational effort. The agent either chooses no learning effort
($u^*=0$) or maximal learning effort ($u^*=1$), which is also known as a \emph{bang--bang} situation. Such boundary regimes arise when the shadow value of human-capital accumulation is sufficiently small or sufficiently large relative to the shadow value of financial resources. For more discussion on the bang--bang type controls, we refer the reader to work~\cite[Section~5]{Arjmand_2026}, in which these types are studied for a biological model. 
\vspace{10pt}

The feedback characterization provides a direct economic interpretation of the optimal policy. The derivative $\partial_zV$ measures the value of increasing human capital, while $-\partial_bV$ measures the value of relaxing the budget constraint. The learning technology $K(m(t),z)$ amplifies the return to educational effort through the surrounding population environment. Consequently, the optimal effort is determined by the interaction between individual incentives and the aggregate educational conditions represented by the population distribution $m(t)$. The optimal control is thus obtained by evaluating the feedback law at the gradient of the value function in the viscosity sense, yielding
\(u^{*,m}(t,x) = u^{*,m}\bigl(t,x,\nabla_xV^m(t,x)\bigr)\). Since \(V^m\) is understood in the viscosity sense and does not need to be differentiable everywhere, the feedback \(u^{*,m}\) can be selected measurably from the set of Hamiltonian minimizers. This representation will play a central role in the construction of the velocity field governing the evolution of the population distribution in the MFG equilibrium in Section~\ref{sec:MFG-system}. However, let us continue the current section with some further properties of the value function.


The value function possesses several qualitative properties that reflect the economic structure of the model. In particular, larger initial human capital or financial resources should not increase the time required to reach the qualification threshold. We first establish the continuous dependence of trajectories on their initial conditions and derive the continuity properties of the value function. We then prove a comparison principle for the controlled dynamics, deduce the monotonicity of the value function with respect to the initial state, and finally study comparative statics with respect to the structural primitives of the model. We begin the passage with the stability of trajectories with respect to their initial conditions.

\begin{proposition}[Continuous dependence of trajectories]
\label{prop:continuous_dependence}
Assume that {\rm (H1)--(H4)} hold. Fix $t_0\ge0$ and let
$x_0^1,x_0^2\in\mathbb R_+^2$. Let
$u\in\mathcal U(t_0,x_0^1)\cap\mathcal U(t_0,x_0^2)$,
and denote by
$x_i(t)=x_{t_0,x_0^i}^{u}(t)$, for $i=1,2$,
the corresponding trajectories. Then there exists a constant $C>0$, depending only on the Lipschitz constants of the controlled vector field, such that
\[
|x_1(t)-x_2(t)| \le C e^{C(t-t_0)} |x_0^1-x_0^2|,
\quad \forall\, t\ge t_0.
\]
\end{proposition}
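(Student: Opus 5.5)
The plan is a direct Grönwall argument on the difference of the two trajectories, which are driven by the same control $u$ and the same population flow $m(\cdot)$. Both $x_1$ and $x_2$ are absolutely continuous Carathéodory solutions of \eqref{eq:control-system}, so they satisfy the integral identities
\[
x_i(t)=x_0^i+\int_{t_0}^t f\bigl(r,x_i(r),m(r),u(r)\bigr)\diff r,\qquad i=1,2,\ t\ge t_0.
\]
Subtracting them, we get
\[
|x_1(t)-x_2(t)|\le |x_0^1-x_0^2|+\int_{t_0}^t \bigl|f(r,x_1(r),m(r),u(r))-f(r,x_2(r),m(r),u(r))\bigr|\diff r.
\]
Membership of $u$ in both admissible sets guarantees that both trajectories stay in $\mathbb R_+^2$. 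This is exactly the domain on which (H1) and (H2) are stated.

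The key step is a Lipschitz estimate for $f$ in the state variable, uniform in $t$, $m$ and $u$. The essential point is that the control enters identically in both trajectories. In the first component, $|K(m(r),z_1)u-K(m(r),z_2)u|\le L_K|z_1-z_2|$ by (H1), using $0\le u\le 1$. In the second component, the cost terms $c(u(r))$ cancel exactly, so (H3) is not even needed, and $|w(z_1)-w(z_2)|\le L_w|z_1-z_2|$ by (H2). The budget variable $b$ does not appear on the right-hand side at all. Hence $|f(r,x_1,m(r),u)-f(r,x_2,m(r),u)|\le L|x_1-x_2|$ with $L:=\sqrt{L_K^2+L_w^2}$, or simply $L_K+L_w$.

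Inserting this bound, the function $\delta(t)=|x_1(t)-x_2(t)|$ is continuous and satisfies
\[
\delta(t)\le \delta(t_0)+L\int_{t_0}^t\delta(r)\diff r .
\]
The integral form of Grönwall's lemma then gives $\delta(t)\le e^{L(t-t_0)}|x_0^1-x_0^2|$ for all $t\ge t_0$. This is the claim with $C:=\max\{1,L\}$, which depends only on $L_K$ and $L_w$.

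I do not expect a genuine obstacle. The only point needing care is that (H1) provides no regularity in $m$ or in $t$ beyond measurability. This causes no difficulty, because both trajectories are evaluated along the same $m(r)$ at the same time $r$, and only the $z$-Lipschitz property of $K$ is used. Measurability of $r\mapsto f(r,x_i(r),m(r),u(r))$, needed for the integrals to make sense, is already implicit in the well-posedness of Carathéodory solutions used in Lemma~\ref{lem:semigroup}.
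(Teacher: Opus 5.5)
Your proposal is correct and follows the same route as the paper's proof. Both use the integral formulation, a Lipschitz bound on $f$ in the state variable that is uniform in $t$, $m$ and $u$, and then Gr\"onwall's inequality with the constant renamed; you are simply more explicit, deriving $L=\sqrt{L_K^2+L_w^2}$ from the cancellation of the $c(u)$ terms and taking $C=\max\{1,L\}$, where the paper only asserts that such a constant exists.
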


\begin{proof}
The controlled vector field associated with \eqref{eq:control-system} is measurable in time and Lipschitz continuous with respect to the state variable under {\rm (H1)--(H4)}. Hence there exists $C>0$ such that \(|f(t,x_1,u)-f(t,x_2,u)|
\le C|x_1-x_2|\). Using the integral formulation of the trajectories, we have
\[
|x_1(t)-x_2(t)| \le |x_0^1-x_0^2| + C\int_{t_0}^t
|x_1(s)-x_2(s)|\,\diff s.
\]
Gronwall's inequality then yields \(|x_1(t)-x_2(t)| \leq e^{C(t-t_0)} |x_0^1-x_0^2|\), for which renaming the constant gives the claimed estimate.
\end{proof}

The previous estimate immediately implies the continuity of the value function with respect to the initial state.

\begin{proposition}[Continuity of the value function]
\label{prop:V-continuity}
Assume that \emph{(H1)--(H4)} hold. Fix \(t_0\ge0\). Then the value function \(x\longmapsto V(t_0,x)\) is lower semicontinuous on \(\mathbb R_+^2\). Moreover, let \(x\in\operatorname{int}\mathcal R(t_0)\), and assume that there exists an optimal control \(u^{*,m}\in\mathcal U(t_0,x)\) whose admissibility and target-reaching property are stable under sufficiently small perturbations of the initial state. Then \(V(t_0,\cdot)\) is continuous at \(x\).
\end{proposition}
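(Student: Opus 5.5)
The argument has two parts: lower semicontinuity by compactness, and continuity at interior points from the stability hypothesis.

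\emph{Lower semicontinuity.} Let \(x_n\to x\) in \(\mathbb R_+^2\) and set \(\ell=\liminf_n V(t_0,x_n)\). We may assume \(\ell<+\infty\), and after extraction that \(V(t_0,x_n)\to\ell\) with every \(x_n\in\mathcal R(t_0)\). Theorem~\ref{thm:existence} gives, for each \(n\), an optimal control \(u_n\in\mathcal U(t_0,x_n)\) with \(\tau(t_0,x_n,u_n)=V(t_0,x_n)\). Fix \(T>t_0+\ell+1\). Since \((x_n)\) is bounded, (H4) and Grönwall's inequality give a uniform bound on the trajectories \(y_n=x^{u_n}_{t_0,x_n}\) on \([t_0,T]\) and a uniform Lipschitz bound. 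By Banach--Alaoglu, \(u_n\rightharpoonup u^\star\) weakly-* in \(L^\infty(t_0,T)\) along a subsequence, and by Arzel\`a--Ascoli, \(y_n\to y^\star\) uniformly on \([t_0,T]\). The passage to the limit in the integral equation is the one already carried out in the proof of Theorem~\ref{thm:existence}; the only new term is \(x_n\to x\) in the initial condition, which converges trivially. Thus \(y^\star=x^{u^\star}_{t_0,x}\) on \([t_0,T]\), and the uniform limit of \(b_n\ge0\) gives \(b^\star\ge0\) there.

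Beyond \(T\) the control has to be extended so that the budget stays nonnegative. I would take \(u^\star=0\) after \(T\): then \(\dot b=w(z)-c(0)\), which is nonnegative provided \(w\ge c(0)\). This inequality is not contained in (H1)--(H4), and it is the step I expect to be the main obstacle. My first attempt would be to exploit that \(u_n\) is only relevant up to the hitting time, so one may choose a convenient continuation after reaching \(\Gamma\). If that fails, I would add the mild assumption \(c(0)=0\), which together with \(w\ge0\) makes \(u=0\) always viable. The same issue is implicitly present in Theorem~\ref{thm:existence}, so I would handle it in the same way.

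Granting admissibility, the hitting points satisfy \(y_n(t_0+\tau_n)\in\Gamma\) with \(\tau_n=V(t_0,x_n)\to\ell\). Uniform convergence and closedness of \(\Gamma\) give \(y^\star(t_0+\ell)\in\Gamma\). Hence \(V(t_0,x)\le\tau(t_0,x,u^\star)\le\ell\), which is lower semicontinuity.

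\emph{Continuity at interior points.} By lower semicontinuity it suffices to prove upper semicontinuity at \(x\). The hypothesis says that for \(x'\) close to \(x\) the optimal control \(u^{*,m}\) of \(x\) lies in \(\mathcal U(t_0,x')\) and reaches \(\Gamma\). I would read this quantitatively: the reaching time is stable, \(\tau(t_0,x',u^{*,m})\to\tau(t_0,x,u^{*,m})=V(t_0,x)\) as \(x'\to x\). This can be justified via Proposition~\ref{prop:continuous_dependence}, since \(|x^{u^{*,m}}_{t_0,x'}(t)-x^{u^{*,m}}_{t_0,x}(t)|\le Ce^{CT}|x'-x|\) on bounded intervals. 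If the trajectory from \(x\) crosses into the interior of \(\Gamma\) immediately after its hitting time, nearby trajectories hit \(\Gamma\) within a small additional time. Otherwise, this stability is exactly what the assumed stability of the target-reaching property provides. Then \(V(t_0,x')\le\tau(t_0,x',u^{*,m})\) yields \(\limsup_{x'\to x}V(t_0,x')\le V(t_0,x)\). Combined with the first part, \(V(t_0,\cdot)\) is continuous at \(x\).
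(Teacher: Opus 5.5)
Your route is the paper's in both parts: optimal controls for the approximating states, weak-* compactness of controls plus Arzel\`a--Ascoli, and closedness of \(\Gamma\); then the assumed stability of \(\tau(t_0,\cdot,u^{*,m})\) for the upper bound at interior points. The interior-point part is fine. The gap is the step you flag yourself. You only obtain \(b^\star\ge0\) on \([t_0,T]\), whereas \(u^\star\in\mathcal U(t_0,x)\) requires \(b\ge0\) for all \(t\ge t_0\). Neither of your remedies works under (H1)--(H4). The condition \(c(0)=0\) is an extra hypothesis. A ``convenient continuation'' after the hitting time exists only if the hitting state lies in the viability set \(\mathcal K(t_0+\ell)\), and that is exactly what has to be proved. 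The paper's proof does not close this either. It extends the \(u_n\) by zero after their hitting times, which can itself destroy \(b_n\ge0\) on \([\tau_n,T]\) wherever \(w<c(0)\). It then infers admissibility of the limit from uniform convergence on \([t_0,T]\) alone.

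The missing idea is simply not to truncate. Each \(u_n\in\mathcal U(t_0,x_n)\) is already defined on \([t_0,+\infty)\), with \(b_n(t)\ge0\) for every \(t\ge t_0\). Since \(L^\infty(t_0,+\infty)\) is the dual of the separable space \(L^1(t_0,+\infty)\), you can extract \(u_n\rightharpoonup u^\star\) weakly-* on the whole half-line. By (H4) the trajectories are bounded and equi-Lipschitz on every \([t_0,T_k]\) with \(T_k\uparrow+\infty\). A diagonal Arzel\`a--Ascoli extraction then gives \(x_n\to x^\star\) locally uniformly on \([t_0,+\infty)\). The limit passage you borrow from Theorem~\ref{thm:existence}, applied on each \([t_0,T_k]\), identifies \(x^\star\) with \(x^{u^\star}_{t_0,x}\). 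Hence \(b^\star(t)=\lim_n b_n(t)\ge0\) for every \(t\ge t_0\), with no condition on \(c(0)\), and your hitting-time argument is unchanged. The same change repairs the extension step in Theorem~\ref{thm:existence}. Alternatively, take \(u_n\) with \(\tau(t_0,x_n,u_n)\le V(t_0,x_n)+1/n\), so that lower semicontinuity does not rely on that theorem at all.

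One caveat is shared with the paper: weak-* convergence does not give \(\int c(u_n)\to\int c(u^\star)\) when \(c\) is nonlinear. Under the convexity in (H5) you still get \(\lim_n\int_{t_0}^t c(u_n)\ge\int_{t_0}^t c(u^\star)\). This yields \(b^{u^\star}_{t_0,x}\ge\lim_n b_n\ge0\) with the same \(z\)-component, which is all the argument needs.
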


\begin{proof}
We first prove the lower semicontinuity of \(V(t_0,\cdot)\). Let \(x_n\to x\) in \(\mathbb R_+^2\), and  set \(\ell:=\liminf_{n\to\infty}V(t_0,x_n)\). If \(\ell=+\infty\), there is nothing to prove. Otherwise, after extracting a subsequence, we may assume that \(V(t_0,x_n)\longrightarrow \ell<+\infty\). For each \(n\), let \(u_n\in\mathcal U(t_0,x_n)\) be an optimal control, whose existence follows from Theorem~\ref{thm:existence}, and denote the corresponding trajectory by \(x_n(t)=x_{t_0,x_n}^{u_n}(t)\). Since the hitting times \(V(t_0,x_n)\) remain bounded, all trajectories reach the target before some common finite time \(T>t_0\). Extending the controls by zero after their respective hitting times, we may regard them as bounded elements of \(L^\infty(t_0,T)\). By the Banach--Alaoglu theorem, after extraction of a subsequence, \(u_n\rightharpoonup \tilde u\) in \(L^\infty(t_0,T)\). The uniform growth estimate in {\rm(H4)} and the corresponding equicontinuity estimate yield, by the Arzel\`a--Ascoli theorem, uniform convergence \(x_n\longrightarrow x^\star\) on \([t_0,T]\). The same compactness argument as in the proof of Theorem~\ref{thm:existence} shows that \(x^\star\) is the trajectory associated with \(\tilde u\). Moreover, since the state constraint is preserved under uniform convergence, \(\tilde u\) is admissible from \(x\). Since \(x_n\bigl(t_0+V(t_0,x_n)\bigr)\in\Gamma\) and \(\Gamma\) is closed, the uniform convergence of the trajectories and the convergence of the hitting times imply \(x^\star(t_0+\ell)\in\Gamma\). Consequently,
\(V(t_0,x)\le\tau(t_0,x,\tilde u)\le\ell =\liminf_{n\to\infty}V(t_0,x_n)\), and thus \(V(t_0,\cdot)\) is lower semicontinuous.

Now let \(x\in\operatorname{int}\mathcal R(t_0)\), and assume that there exists an optimal control \(u^{*,m}\in\mathcal U(t_0,x)\) such that, for all initial states \(y\) sufficiently close to \(x\), the same control \(u^{*,m}\) is admissible from \(y\), and its hitting time depends continuously on the initial state. By continuous dependence of the trajectories on the initial state,
\(\tau(t_0,y,u^{*,m})\longrightarrow \tau(t_0,x,u^{*,m})=V(t_0,x)\) as \(y\longrightarrow x\). Since \(V(t_0,y)\le\tau(t_0,y,u^{*,m})\), we obtain
\(\limsup_{y\to x}V(t_0,y)\le V(t_0,x)\). Together with the lower semicontinuity already proved, this yields
\(V(t_0,y)\longrightarrow V(t_0,x)\) as \(y\longrightarrow x\). Hence \(V(t_0,\cdot)\) is continuous at \(x\).
\end{proof}

We now establish the monotonicity of the value function with respect to the initial state.

\begin{theorem}[Comparison principle]
\label{thm:comparison}
Assume that {\rm (H1)--(H4)} hold and that, for every \(m\in\mathcal P(\mathbb R_+^2)\), the functions \(z\longmapsto K(m,z)\) and \(z\longmapsto w(z)\) are nondecreasing. Let \(t_0\ge0\), and \(x_0^1=(z_0^1,b_0^1)\), \(x_0^2=(z_0^2,b_0^2)\) satisfy \(z_0^1\le z_0^2\), \(b_0^1\le b_0^2\). Then \(V(t_0,x_0^2)\le V(t_0,x_0^1)\). In particular, for every fixed initial time \(t_0\), the value function is nonincreasing with respect to both components of the initial state.
\end{theorem}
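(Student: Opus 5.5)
The plan is a trajectory comparison argument: any admissible control that works from the poorer state \(x_0^1\) also works from the richer state \(x_0^2\), and reaches the target no later. If \(V(t_0,x_0^1)=+\infty\) there is nothing to prove. Otherwise, fix an arbitrary \(u\in\mathcal U(t_0,x_0^1)\) with \(\tau(t_0,x_0^1,u)<+\infty\). Denote by \(x_i=(z_i,b_i)\) the trajectory generated by this same \(u\) from \(x_0^i\), \(i=1,2\). It suffices to show two things: \(u\in\mathcal U(t_0,x_0^2)\), and \(\tau(t_0,x_0^2,u)\le\tau(t_0,x_0^1,u)\). Taking the infimum over \(u\) then gives \(V(t_0,x_0^2)\le V(t_0,x_0^1)\).

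\textbf{Step 1: ordering of the skill component.} I will show \(z_1(t)\le z_2(t)\) for all \(t\ge t_0\). The \(z\)-equation is decoupled from \(b\): it reads \(\dot z=g(t,z):=K(m(t),z)u(t)\), which is Carathéodory in \(t\) and Lipschitz in \(z\) with constant \(L_K\) by (H1). Set \(\delta=z_1-z_2\). Then \(\dot\delta=g(t,z_1)-g(t,z_2)\), and \(|\dot\delta|\le L_K|\delta|\) a.e. since \(u\le1\). I would apply Grönwall to \(\delta^+=\max\{\delta,0\}\): on the set where \(\delta>0\) we have \(\frac{d}{dt}\delta^+\le L_K\delta^+\), and \(\delta^+(t_0)=0\), so \(\delta^+\equiv0\). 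Note that this uses only Lipschitz continuity and uniqueness, not monotonicity of \(K\). Monotonicity of \(K\) could alternatively be invoked via a standard one-dimensional comparison lemma, but the Grönwall argument is cleaner.

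\textbf{Step 2: ordering of the budget and admissibility.} Integrating the \(b\)-equation gives
\[
b_2(t)-b_1(t)=(b_0^2-b_0^1)+\int_{t_0}^t\bigl(w(z_2(s))-w(z_1(s))\bigr)\diff s,
\]
because the cost term \(c(u(s))\) is identical for both trajectories and cancels. Since \(w\) is nondecreasing and \(z_1\le z_2\) by Step 1, the integrand is nonnegative, so \(b_2(t)\ge b_1(t)\ge0\) for all \(t\ge t_0\). Hence \(u\in\mathcal U(t_0,x_0^2)\).

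\textbf{Step 3: hitting times.} Let \(\tau_1=\tau(t_0,x_0^1,u)\). Since \(\Gamma\) is closed and the trajectory is continuous, \(z_1(t_0+\tau_1)\ge z^\star\). By Step 1, \(z_2(t_0+\tau_1)\ge z^\star\), so \(x_2(t_0+\tau_1)\in\Gamma\) and \(\tau(t_0,x_0^2,u)\le\tau_1\). This yields \(V(t_0,x_0^2)\le\tau(t_0,x_0^2,u)\le\tau(t_0,x_0^1,u)\); taking the infimum over admissible \(u\) concludes. Monotonicity in each component separately follows by varying one coordinate at a time.

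The main obstacle is Step 1, which needs a rigorous comparison for a Carathéodory ODE with a merely measurable control, without assuming smoothness. The Grönwall argument on \(\delta^+\), using absolute continuity of \(\delta\) and \(\frac{d}{dt}\delta^+=\mathbf 1_{\{\delta>0\}}\dot\delta\) a.e., handles this. Everything else is elementary once the \(z\)-dynamics are seen to be independent of \(b\).
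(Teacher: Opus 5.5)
Your proposal is correct and follows the paper's proof essentially step by step. You apply the same control from both initial states, order the skill components, and then order the budgets via the monotonicity of \(w\). This gives \(\mathcal U(t_0,x_0^1)\subseteq\mathcal U(t_0,x_0^2)\) and \(\tau(t_0,x_0^2,u)\le\tau(t_0,x_0^1,u)\), and you conclude by taking the infimum.

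The only difference is in Step 1. Your Gr\"onwall argument on \(\delta^+\) makes rigorous the scalar comparison that the paper only cites. It also shows, correctly, that the monotonicity of \(K\) in \(z\) is not needed there: the Lipschitz bound from (H1) is enough, since it means scalar solutions cannot cross. The paper, by contrast, invokes the monotonicity of \(K\) for that step.
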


\begin{proof}
Fix \(t_0\ge0\) and let \(u\in\mathcal U(t_0,x_0^1)\) be an admissible control for the problem starting from \(x_0^1\) at time \(t_0\). We apply the same control to the system starting from \(x_0^2\). Denote the corresponding trajectories by \(x_i(t)=\bigl(z_i(t),b_i(t)\bigr)\) for which the control system~\eqref{eq:control-system} is satisfied, where \(i=1,2\) and \(t\ge0\). We first compare the human-capital components. Since
\(u(t)\in[0,1]\) and, for every fixed measure \(m\), the function \(z\mapsto K(m,z)\) is nondecreasing, the scalar vector field \(z\longmapsto K\bigl(m(t_0+t),z\bigr)u(t)\) is nondecreasing for almost every \(t\ge0\). Since \(z_0^1\le z_0^2,\) the comparison principle for scalar differential equations yields \(z_1(t)\le z_2(t)\), for all \(t\ge0\). Because \(w\) is nondecreasing, we consequently have
\(w\bigl(z_1(t)\bigr) \le w\bigl(z_2(t)\bigr)\), for all \(t\ge0\). The two trajectories are driven by the same control, and therefore we have
\[
\dot b_1(t) =  w\bigl(z_1(t)\bigr)-c\bigl(u(t)\bigr) \le
w\bigl(z_2(t)\bigr)-c\bigl(u(t)\bigr) = \dot b_2(t).
\]
Together with \(b_0^1\le b_0^2\) implies
\(b_1(t)\le b_2(t)\), for all \(t\ge0\). Thus, under the same control, the trajectory starting from the larger initial state dominates the trajectory starting from the smaller initial state componentwise, which means that \(z_1(t)\le z_2(t)\) and \(b_1(t)\le b_2(t)\) for all \(t\ge0\).

This comparison also implies the inclusion of admissible control sets. Indeed, since \(u\in\mathcal U(t_0,x_0^1)\), the corresponding trajectory satisfies \(b_1(t)\ge0\), for all \(t\ge0\). Since \(b_2(t)\ge b_1(t)\), we obtain \(b_2(t)\ge0\), for all \(t\ge0\). Hence the same control is admissible from the second initial state  \(u\in\mathcal U(t_0,x_0^2)\). Therefore,
\(\mathcal U(t_0,x_0^1) \subseteq \mathcal U(t_0,x_0^2)\). Moreover, since \(z_1(t)\le z_2(t)\), for all \(t\ge0\), the trajectory starting from \(x_0^2\) reaches the target set no later than the trajectory starting from \(x_0^1\). More precisely, \(\tau(t_0,x_0^2,u) \le \tau(t_0,x_0^1,u)\) for every \(u\in\mathcal U(t_0,x_0^1)\). Taking the infimum over the smaller admissible set gives
\[
V(t_0,x_0^2) = \inf_{u\in\mathcal U(t_0,x_0^2)}
\tau(t_0,x_0^2,u) \le \inf_{u\in\mathcal U(t_0,x_0^1)}
\tau(t_0,x_0^2,u) \le \inf_{u\in\mathcal U(t_0,x_0^1)} \tau(t_0,x_0^1,u) = V(t_0,x_0^1),
\]
which proves the result.
\end{proof}

The comparison principle has a direct economic interpretation. Starting from a higher level of human capital or from a larger initial budget cannot make the qualification objective more difficult to attain. A higher initial skill level places the agent closer to the target and, under the monotonicity of the income function, generates at least as much income throughout the evolution. A larger initial budget, on the other hand, relaxes the state constraint and enlarges the set of feasible learning policies. These conclusions can be stated explicitly as follows.

\begin{corollary}[Monotonicity of the value function]
\label{cor:monotonicity}
Under the assumptions of Theorem~\ref{thm:comparison}, for every fixed \(t_0\ge0\), the following statements are satisfied for the value function.
\begin{enumerate}
    \item[{\rm (i)}] For every fixed \(b\ge0\), the mapping
    \(z\longmapsto V(t_0,z,b)\) is nonincreasing.
    
    \item[{\rm (ii)}] For every fixed \(z\ge0\), the mapping
    \(b\longmapsto V(t_0,z,b)\) is nonincreasing.
    
    \item[{\rm (iii)}] More generally, if \((z_1,b_1)\le(z_2,b_2)\) componentwise, then
    \(V(t_0,z_2,b_2) \le V(t_0,z_1,b_1)\).
\end{enumerate}
\end{corollary}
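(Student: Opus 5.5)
The corollary follows directly from Theorem~\ref{thm:comparison}; no new estimate is needed. I will first prove statement (iii), since it is exactly the comparison theorem in coordinates, and then obtain (i) and (ii) as special cases.

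For (iii), fix \(t_0\ge0\) and let \((z_1,b_1)\le(z_2,b_2)\) componentwise, that is, \(z_1\le z_2\) and \(b_1\le b_2\). Setting \(x_0^1=(z_1,b_1)\) and \(x_0^2=(z_2,b_2)\), these points satisfy the hypotheses of Theorem~\ref{thm:comparison}, which gives \(V(t_0,x_0^2)\le V(t_0,x_0^1)\). The inequality must hold in \([0,+\infty]\), and this is where I would be careful. If \(x_0^1\notin\mathcal R(t_0)\), then \(V(t_0,x_0^1)=+\infty\) and the inequality is trivial. If \(\mathcal U(t_0,x_0^1)=\emptyset\), the infimum over the empty set is \(+\infty\) by convention, so again there is nothing to prove. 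Otherwise the chain of infima in the proof of Theorem~\ref{thm:comparison} applies verbatim, using the inclusion \(\mathcal U(t_0,x_0^1)\subseteq\mathcal U(t_0,x_0^2)\).

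For (i), fix \(b\ge0\) and take \(z\le z'\). The pairs \((z,b)\le(z',b)\) are ordered componentwise, because the second coordinates are equal. By (iii), \(V(t_0,z',b)\le V(t_0,z,b)\), so \(z\mapsto V(t_0,z,b)\) is nonincreasing. For (ii), fix \(z\ge0\), take \(b\le b'\), and apply (iii) to \((z,b)\le(z,b')\) in the same way.

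The only point requiring attention is the extended-valued convention just described. In addition, the monotonicity hypotheses on \(K(m,\cdot)\) and \(w\) must hold for every measure \(m\), so that the comparison argument applies to the time-dependent field \(K(m(t),\cdot)\). This is exactly what is assumed in Theorem~\ref{thm:comparison}.
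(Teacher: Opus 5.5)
Your proposal is correct and matches the paper's proof: (iii) is Theorem~\ref{thm:comparison} restated in coordinates, and (i) and (ii) are the special cases in which one coordinate is held fixed. Your remarks on the \(+\infty\) convention are harmless extra care, since the comparison theorem already gives the inequality in \([0,+\infty]\).
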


\begin{proof}
Assertion {\rm (iii)} is precisely the conclusion of
Theorem~\ref{thm:comparison}. To obtain {\rm (i)}, fix \(b\ge0\) and let \(z_1\le z_2\). Applying the comparison theorem to \(x_0^1=(z_1,b)\) and \(x_0^2=(z_2,b)\) gives \(V(t_0,z_2,b) \le V(t_0,z_1,b)\). Hence \(z\mapsto V(t_0,z,b)\) is nonincreasing. Similarly for assertion {\rm (ii)}, fixing \(z\ge0\) and taking \(b_1\le b_2\), the comparison theorem applied to \(x_0^1=(z,b_1)\) and \(x_0^2=(z,b_2)\) yields \(V(t_0,z,b_2) \le V(t_0,z,b_1)\). Therefore \(b\mapsto V(t_0,z,b)\) is nonincreasing.
\end{proof}

The preceding results describe the dependence of the optimal completion time on the initial state of an individual. We now turn to a different type of comparative statics question, regarding the change in the value function when the structural environment governing human-capital accumulation is modified.
The dynamics are determined by three fundamental primitives such as the learning technology \(K\), the income function \(w\), and the effort cost \(c\). The learning technology determines the productivity of educational effort, while the income function determines the financial resources generated by the current level of human capital, and the effort cost determines the budgetary burden associated with a given learning intensity. Modifying any of these primitives changes the set of feasible trajectories and, potentially, the set of optimal strategies. The following result compares the corresponding minimal-time problems. In each case, the initial time \(t_0\), the initial state \(x_0\), and the measurable population distribution \(m(\cdot)\) are kept fixed. For \(i=1,2\), we denote by \(V_i(t_0,x_0)\) the value functions associated with the respective primitives.

\begin{theorem}[Comparative statics]
\label{thm:comparative}
Assume that the relevant pairs of primitives satisfy
{\rm (H1)--(H4)}, together with the monotonicity assumptions required in Theorem~\ref{thm:comparison}. Then the value function is monotone with respect to the structural primitives of the model in the following sense.

\begin{enumerate}
    \item[{\rm (i)}]
    Let \(K_1\) and \(K_2\) be two learning technologies satisfying \(K_1(m,z)\le K_2(m,z)\) for all \((m,z)\in\mathcal P(\mathbb R_+^2)\times\mathbb R_+\). Let \(V_1\) and \(V_2\) denote the corresponding value functions. Then, for every \(t_0\ge0\) and every \(x_0\in \mathbb R_+^2\), we have \(V_2(t_0,x_0) \le V_1(t_0,x_0)\).
    Thus, an improvement in the learning technology cannot increase the minimal time required to reach the qualification threshold.
    
    \item[{\rm (ii)}]
    Let \(w_1,w_2\colon \mathbb R_+\longrightarrow\mathbb R_+\) satisfy
    \(w_1(z)\le w_2(z)\), for all \(z\ge0\). Let \(V_1\) and \(V_2\) denote the corresponding value functions. Then, for every \(t_0\ge0\) and every \(x_0\in \mathbb R_+^2\), we have \(V_2(t_0,x_0) \le V_1(t_0,x_0)\). Hence, increasing the income generated by human capital cannot increase the minimal time required to attain the target.
    
    \item[{\rm (iii)}]
    Let \(c_1,c_2\colon[0,1]\longrightarrow\mathbb R_+\) satisfy \(c_1(u)\le c_2(u)\), for all \(u\in[0,1]\). Let \(V_1\) and \(V_2\) be the corresponding value functions. Then, for every \(t_0\ge0\) and every \(x_0\in \mathbb R_+^2\), we have \(V_1(t_0,x_0) \le V_2(t_0,x_0)\). Therefore, increasing the cost of learning effort cannot decrease the minimal time required to reach the qualification threshold.
\end{enumerate}
\end{theorem}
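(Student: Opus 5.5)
The plan is the same comparison argument as in Theorem~\ref{thm:comparison}. Fix \(t_0\), \(x_0\) and the population flow \(m(\cdot)\), and take a control that is admissible for the ``worse'' system (index \(1\) in (i)--(ii), index \(2\) in (iii)). We apply this same control to the ``better'' system, started from the same initial state, and show three things: its trajectory dominates componentwise, it stays admissible, and it reaches the target no later. The inclusion of admissible sets and the hitting-time inequality then give the value inequality after taking infima, exactly as at the end of the proof of Theorem~\ref{thm:comparison}. If the worse system has value \(+\infty\), there is nothing to prove.

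For (i), fix \(u\in\mathcal U_1(t_0,x_0)\) and let \(x_i=(z_i,b_i)\) be the trajectories under \(K_i\) with the same \(w\) and \(c\). Since \(u\ge0\) and \(K_1\le K_2\), we have
\[
\dot z_1=K_1(m(t),z_1)u\le K_2(m(t),z_1)u .
\]
So \(z_1\) is a subsolution of the scalar equation \(\dot z=K_2(m(t),z)u\), whose right-hand side is Lipschitz in \(z\) by (H1). The scalar comparison principle then gives \(z_1\le z_2\).

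Since \(w\) is nondecreasing, \(\dot b_1=w(z_1)-c(u)\le w(z_2)-c(u)=\dot b_2\), so \(b_1\le b_2\). This yields \(b_2\ge0\), hence \(\mathcal U_1(t_0,x_0)\subseteq\mathcal U_2(t_0,x_0)\). Because the target depends only on \(z\), it also yields \(\tau_2(t_0,x_0,u)\le\tau_1(t_0,x_0,u)\).

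For (ii), the \(z\)-equation does not involve \(w\), so \(z_1=z_2\). Then \(\dot b_1=w_1(z_1)-c(u)\le w_2(z_1)-c(u)=\dot b_2\), and admissibility transfers with identical hitting times. For (iii), again \(z_1=z_2\), and \(\dot b_2=w(z)-c_2(u)\le w(z)-c_1(u)=\dot b_1\). Hence \(\mathcal U_2\subseteq\mathcal U_1\) with equal hitting times, which gives \(V_1\le V_2\). Parts (ii) and (iii) therefore need no monotonicity of \(K\) or \(w\); the budget comparison follows by integrating the derivative inequality.

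The main obstacle is the scalar comparison step in (i). The vector field is only measurable in \(t\), so we need the Carathéodory version of the comparison lemma. We would prove it directly: set \(\delta=z_1-z_2\). Using the Lipschitz bound (H1) and \(u\le 1\),
\[
\frac{d}{dt}\delta^+\le \mathbf 1_{\{\delta>0\}}\bigl(K_2(m,z_1)-K_2(m,z_2)\bigr)u\le L_K\,\delta^+ \quad\text{a.e.},
\]
and Grönwall's inequality with \(\delta^+(t_0)=0\) gives \(\delta^+\equiv0\). The same computation, applied to the sign of \(b_1-b_2\), justifies the budget comparisons.
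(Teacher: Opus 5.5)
Your proposal is correct and follows essentially the same route as the paper: apply a control admissible for the worse system to the better one, show componentwise domination of the trajectories, deduce the inclusion of admissible sets and the hitting-time inequality, and take infima. The one difference is in (i). You justify the scalar comparison through the Lipschitz bound (H1) on \(K_2\) and an explicit Gr\"onwall argument on \(\delta^+\), whereas the paper cites the monotonicity of \(z\mapsto K_1(m,z)\); your justification is what the comparison actually requires, and it also shows that (ii)--(iii) need no monotonicity of \(K\) or \(w\).
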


\begin{proof}
As declared before, we first fix an initial time \(t_0\ge0\) and an initial state \(x_0=(z_0,b_0)\in \mathbb R_+^2\), and then we compare the corresponding controlled systems by applying the same control to both systems.

\paragraph*{(i) Improvement in the learning technology.}

Let \(u\) be admissible for the system associated with \(K_1\), and denote the corresponding trajectory by
\(x_1(t)=\bigl(z_1(t),b_1(t)\bigr)\). Let \(x_2(t)=(z_2(t),b_2(t))\) denote the trajectory generated by the same control under \(K_2\), for which the human-capital dynamics are \(\dot z_i(t) = K_i\bigl(m(t_0+t),z_i(t)\bigr)u(t)\), for \(i=1,2\). Since \(K_1(m,z)\le K_2(m,z)\) and \(z\mapsto K_1(m,z)\) is nondecreasing, the scalar comparison principle yields \(z_1(t)\le z_2(t)\), for all \(t\ge0\). Since \(w\) is nondecreasing, we consequently have \(w\bigl(z_1(t)\bigr) \le w\bigl(z_2(t)\bigr)\). Therefore,
\[
\dot b_1(t) = w\bigl(z_1(t)\bigr)-c(u(t)) \le w\bigl(z_2(t)\bigr)-c(u(t)) = \dot b_2(t).
\]
Because the two trajectories start from the same initial budget, \(b_1(0)=b_2(0)=b_0\), we obtain \(b_1(t)\le b_2(t)\), for all \(t\ge0\). Thus, every control admissible for the system with \(K_1\) is also admissible for the system with \(K_2\). Moreover, the improved learning technology produces a human-capital trajectory that reaches the target no later. Hence \(\tau_2(t_0,x_0,u) \le \tau_1(t_0,x_0,u)\). Taking the infimum over all controls admissible for the first system gives
\[
V_2(t_0,x_0) = \inf_{u\in\mathcal U_2(t_0,x_0)}
\tau_2(t_0,x_0,u) \le \inf_{u\in\mathcal U_1(t_0,x_0)}
\tau_2(t_0,x_0,u) \le \inf_{u\in\mathcal U_1(t_0,x_0)}
\tau_1(t_0,x_0,u) = V_1(t_0,x_0).
\]

\paragraph*{(ii) Improvement in the income function.}

Let \(u\) be admissible for the system associated with \(w_1\). Since the human-capital dynamics do not depend directly on the income function, the two trajectories generated by the same control satisfy \(z_1(t)=z_2(t)\), for all \(t\ge0\). Since the budget dynamics are \(\dot b_i(t) = w_i\bigl(z_i(t)\bigr)-c(u(t))\), for \(i=1,2\), and \(w_1(z)\le w_2(z)\), for all \(z\ge0\), we obtain
\(\dot b_1(t)\le\dot b_2(t)\). Together with \(b_1(0)=b_2(0)=b_0\),  this implies \(b_1(t)\le b_2(t)\), for all \(t\ge0\).  Therefore, \(\mathcal U_1(t_0,x_0) \subseteq \mathcal U_2(t_0,x_0)\). For every control admissible under \(w_1\), the human-capital trajectories are identical under the two income functions. Hence \(\tau_2(t_0,x_0,u) = \tau_1(t_0,x_0,u)\). Taking the infimum over the respective admissible sets yields
\[
V_2(t_0,x_0) = \inf_{u\in\mathcal U_2(t_0,x_0)} \tau_2(t_0,x_0,u) \le \inf_{u\in\mathcal U_1(t_0,x_0)}
\tau_2(t_0,x_0,u) = \inf_{u\in\mathcal U_1(t_0,x_0)}
\tau_1(t_0,x_0,u) = V_1(t_0,x_0).
\]

\paragraph*{(iii) Increase in the cost of learning effort.}

Let \(u\) be admissible for the system associated with the higher cost function \(c_2\). Since the human-capital dynamics do not depend directly on the effort cost, the corresponding human-capital trajectories coincide, and therefore we have \(z_1(t)=z_2(t)\), for all \(t\ge0\). For \(i=1,2\), the budget dynamics satisfy \(\dot b_i(t) = w\bigl(z_i(t)\bigr)-c_i(u(t))\), for which, since \(c_1(u)\le c_2(u)\) for all \(u\in[0,1]\), we have
\(\dot b_1(t)\ge\dot b_2(t)\). Because \(b_1(0)=b_2(0)=b_0\), it follows that \(b_1(t)\ge b_2(t)\), for all \(t\ge0\). Consequently,
\(\mathcal U_2(t_0,x_0) \subseteq \mathcal U_1(t_0,x_0) \). For every control admissible under the higher cost \(c_2\), the human-capital trajectories are identical under the two cost functions. Therefore, \(\tau_1(t_0,x_0,u) = \tau_2(t_0,x_0,u)\). Taking the infimum over the admissible controls for the second problem gives
\[
V_1(t_0,x_0) = \inf_{u\in\mathcal U_1(t_0,x_0)} \tau_1(t_0,x_0,u) \le \inf_{u\in\mathcal U_2(t_0,x_0)}
\tau_1(t_0,x_0,u) = \inf_{u\in\mathcal U_2(t_0,x_0)}
\tau_2(t_0,x_0,u) = V_2(t_0,x_0).
\]
Hence, by the three assertions, we observe that the improvements in learning technology or in the income generated by human capital weakly reduce the minimal time required to reach the qualification threshold, whereas an increase in the cost of learning effort weakly increases it. In other words, larger initial human capital, larger financial resources, more productive learning technologies, and higher income opportunities all reduce the time required to attain the qualification threshold, whereas higher educational costs increase it.
\end{proof}

The analysis of the individual minimum-time problem developed in this section provides the main ingredients needed to construct the coupled MFG. In particular, the well-posedness of the controlled dynamics, the existence of optimal controls, the dynamic programming characterization of the value function, and the associated optimal feedback will play a central role in the analysis of the population dynamics. These properties allow us to define the best-response operator that maps a given population flow into the population trajectory induced by the corresponding optimal individual decisions. We now use these
individual-level results to establish the existence of an MFG equilibrium on an arbitrary finite time interval.

\section{Existence of an MFG equilibrium on a finite time interval}\label{sec:MFG-system}

In Section~\ref{sec: Human-capital model}, we analyzed the individual minimal-time optimal control problem for a fixed population flow and characterized the corresponding value function through the DPP and the Hamilton--Jacobi equation. In this section, we endogenize the population distribution and formulate the MFG equilibrium. Unlike in the individual problem, where the population flow is treated as fixed, here the population distribution must be generated consistently by the optimal behavior of the agents. In other words, the distribution \(m(\cdot)\) is no longer treated as an exogenous input, but must be generated by the optimal behavior of the population itself. Each agent chooses an optimal learning policy given the current population distribution, while the population evolves under the velocity field induced by these optimal decisions. An equilibrium is therefore characterized by a consistency condition between individual optimization and aggregate population dynamics. In other words, the equilibrium problem consists in finding a population flow \(m\) which is mutually consistent with the value function \(V^{m}\) and an optimal feedback \(u^{*,m}\), defined in Section~\ref{sec: Human-capital model}.
The individual optimization problem determines the optimal learning effort for each agent, while the aggregate dynamics describe the evolution of the distribution of agents in the state space \(\mathbb R_+^2\). Since the learning technology depends on the population distribution, the individual and aggregate problems are coupled through the distribution \(m(t)\).

The equilibrium condition expresses the fact that the population distribution used by the agents when solving their individual optimization problem coincides with the distribution generated by their optimal trajectories. Therefore, this consistency naturally encourages us to combine the Hamilton--Jacobi equation with the continuity equation in order to yield the coupled \emph{MFG system}
\begin{equation}\label{eq:MFG-system}
\left\{
\begin{aligned}
&\partial_t V^m (t,x) + H\!\left(t,x,m(t),\nabla V^{m}(t,x)\right)=0,
&& (t,x)\in(0, T)\times (\mathbb R_+^2\setminus\Gamma),
\\[1ex]
&\partial_t m(t) +\diverg_{x} \left(F^m(t,x)\,m(t)\right)=0,
&& (t,x)\in(0,T)\times\mathbb R_+^2,
\\[1ex]
&V^{m}(t,x)=0,
&& x\in\Gamma,
\\[1ex]
&m(0)=m_0,
&& \text{in }\mathbb R_+^2,
\end{aligned}
\right.
\end{equation}
where \( F^m(t,x) = f\!\left(t,x,m(t), u^{*,m}(t,x) \right)\). Although the individual problem is posed on the infinite time horizon, the equilibrium construction below is carried out on an arbitrary finite interval $[0,T]$. The system \eqref{eq:MFG-system} couples the individual optimization problem with the aggregate population dynamics. The Hamilton--Jacobi equation determines the optimal learning policy of a representative agent for a given population distribution, while the continuity equation describes the evolution of the population under the optimal feedback generated by the value function. The coupling is bidirectional, based on the fact that the population distribution affects the learning technology through the coefficient \(K(m(t),z)\), and the value function determines the velocity field that transports the population. An MFG equilibrium is therefore a fixed point of this interaction between individual incentives and aggregate dynamics (see, Definition~\ref{def: MFG equilibrium}).

The main difficulty is that the population distribution is simultaneously an input and an output of the individual optimization problem. Given a population flow \(m\), the representative agent solves the minimal-time optimal control problem and obtains a value function \(V^m\) and an associated optimal feedback \(u^{*,m}\). This feedback determines the velocity field of the population and therefore generates a new population flow. An equilibrium is obtained when the population flow used by agents to optimize coincides with the population flow generated by their optimal behavior.
Since the model is formulated on an infinite time horizon, we first establish the existence of equilibria on arbitrary finite intervals \([0,T]\). The finite-horizon setting provides the compactness required for the fixed-point argument and allows the population dynamics to be analyzed in a complete metric space. The finite-horizon equilibria provide the natural starting point for a possible infinite-horizon analysis. Establishing an infinite-horizon equilibrium would require additional estimates and a compactness argument that are beyond the scope of the present work. In other words, the individual optimization problem, established in Section~\ref{sec: Human-capital model}, is formulated on an infinite time horizon, since the objective is the first hitting time of the qualification set and no terminal time is imposed a priori. Nevertheless, the construction of an MFG equilibrium is first carried out on an arbitrary finite time interval \([0,T]\). For each \(T > 0\), the population dynamics are considered on \([0,T]\), and the corresponding fixed-point problem is solved in the space of continuous population flows on this interval. The parameter \(T\) therefore represents the time interval on which the population equilibrium is constructed, rather than a terminal time in the individual minimum-time problem. The resulting finite-horizon equilibria can subsequently be investigated as \(T\longrightarrow \infty\), with the aim of obtaining an infinite-horizon equilibrium under suitable uniform estimates.

Throughout this section, we fix a finite time horizon \(T>0\),
and introduce the subset of probability measures with finite first moment,
\[
\mathcal P_1(\mathbb R_+^2) = \left\{\mu\in\mathcal P(\mathbb R_+^2) \suchthat \int_{\mathbb R_+^2}|x|\,\mu(\diff x)<+\infty
\right\},
\]
where \(|x|=(z^2+b^2)^{1/2}\) denotes the Euclidean norm. The natural metric on \(\mathcal P_1(\mathbb R_+^2)\) is the Wasserstein distance of order one, as follows.
\[
W_1(\mu,\nu) = \inf_{\pi\in\Pi(\mu,\nu)} \int_{\mathbb R_+^2\times\mathbb R_+^2} |x-y|\, \pi(\diff x,\diff y),
\]
where \(\Pi(\mu,\nu)\) denotes the set of couplings of \(\mu\) and \(\nu\). The metric space \((\mathcal P_1(\mathbb R_+^2),W_1)\) is complete and separable.
We denote by \(\mathcal C_T = C\bigl([0,T]; \mathcal P_1(\mathbb R_+^2)\bigr)\) the space of continuous probability measures endowed with the metric
\[
d_T(m^1,m^2) = \sup_{t\in[0,T]} W_1\bigl(m^1(t),m^2(t)\bigr).
\]
Since \((\mathcal P_1(\mathbb R_+^2),W_1)\) is complete, \((\mathcal C_T,d_T)\) is also a complete metric space.
We fix an initial population distribution \(m_0\in\mathcal P_1(\mathbb R_+^2)\). For every $m\in\mathcal C_T$, we consider the individual minimum-time problem with the measurable population $m(\cdot)$ on $[0,T]$, for which the individual objective remains the first hitting time of $\Gamma$, rather than a terminal-time criterion, in which we still denote the associated value function by $V^m$ for simplicity.
Notice that we assume the structural hypotheses \rm {(H1)--(H4)}, introduced in Section~2, are satisfied, and in addition, we impose the following assumptions (namely, (E1) and (E2)) on the population interaction and on the regularity of the extended vector field.

\begin{enumerate}
    \item[(E1)] \textbf{Lipschitz continuity with respect to the population measure.}
    There exists a constant $L_m>0$ such that
    \[
    |K(\mu,z)-K(\nu,z)| \le L_m\,W_1(\mu,\nu),
    \]
    for all $\mu,\nu\in\mathcal P_1(\mathbb R_+^2)$
    and all $z\in\mathbb R_+$.
\end{enumerate}
Assumption (E1) ensures that the population interaction enters the dynamics continuously with respect to the Wasserstein topology. 
The value function of the individual optimization problem is defined through the first hitting time of the qualification set \(\Gamma\). Consequently, the optimal feedback \(u^{*,m}\) is only relevant on the complement \(\mathbb R_+^2\setminus\Gamma\), where the optimization problem is active. In order to define the population dynamics globally on the entire state space, it is necessary to specify the behavior of agents after they have reached the qualification threshold. We adopt the conservative population formulation introduced in the previous section. Qualified agents remain part of the population distribution, and their subsequent evolution is fixed by a measurable extension of the optimal feedback.
Let \(m\in\mathcal C_T\) be a measurable population flow. Assume that \(u^{*,m}\colon [0,T]\times (\mathbb R_+^2\setminus\Gamma) \longrightarrow [0,1]\) is the optimal feedback associated with the value function \(V^m\). We fix a measurable function
\(\bar u \colon [0,T]\times\Gamma \longrightarrow [0,1]\), called the \emph{post-qualification feedback}, and define the extended feedback \(\widehat u^{\,m}\colon  [0,T]\times\mathbb R_+^2 \longrightarrow [0,1]\) by
\begin{equation*}
\label{eq:extended-feedback}
\widehat u^{\,m}(t,x) =
\begin{cases}
u^{*,m}(t,x),& x\in \mathbb R_+^2\setminus\Gamma,
\\[0.8ex]
\bar u(t,x),& x\in\Gamma.
\end{cases}
\end{equation*}

Since \(\Gamma\) is a Borel subset of \(\mathbb R_+^2\), and both \(u^{*,m}\) and \(\bar u\) are measurable, the function \(\widehat u^{\,m}\) is measurable on \([0,T]\times\mathbb R_+^2\). The choice of \(\bar u\) does not affect the individual optimization problem, because the latter terminates at the first hitting time of \(\Gamma\). Its only purpose is to define the subsequent evolution of qualified agents in the aggregate population dynamics. A natural choice is \(\bar u\equiv0\), which corresponds to agents ceasing educational effort after qualification, but the analysis below only requires measurability and boundedness of the extension.  The extended feedback induces the following vector field \(\widehat F^m(t,x) = f\!\left(t,x,m(t),\widehat u^{\,m}(t,x) \right)\), for which we make the following hypothesis.  
\begin{enumerate}
    \item[(E2)] \textbf{Regularity of the extended closed-loop vector field.}
    For every measurable population flow $m\in\mathcal C_T$, the extended closed-loop vector field
    \(\widehat F^m(t,x)\) is measurable in $t$ and globally Lipschitz continuous with respect to $x$, uniformly with respect to $t\in[0,T]$ and $m\in\mathcal C_T$. More precisely, there exists a constant $L_F>0$ such that
    \[
    |\widehat F^m(t,x)-\widehat F^m(t,y)| \le L_F|x-y|,
    \]
    for all $x,y\in\mathbb R_+^2$, all $t\in[0,T]$, and all $m\in\mathcal C_T$.
    This assumption guarantees the well-posedness of the characteristic flow associated with the continuity equation. More precisely, since \(0\le\widehat u^{\,m}\le1\), assumption (H4) implies the linear growth estimate \(|\widehat F^m(t,x)| \le C \left(1+|x|\right)\), for some constant \(C>0\) independent of \(m\) and \(t\in[0,T]\). 
\end{enumerate}

\begin{proposition}[Well-posedness of the extended closed-loop dynamics] \label{prop:closed-loop-wellposed}
Assume that {\rm (H1)--(H4)} and {\rm (E1)--(E2)} hold. Let \(m\in\mathcal C_T\) and \(x_0\in\mathbb R_+^2\). Then the nonautonomous differential equation
\begin{equation}
\label{eq:closed-loop-system}
\left\{
\begin{aligned}
\dot x(t) &= \widehat F^m(t,x(t)), \qquad t\in[0,T],\\
x(0) &= x_0,
\end{aligned}
\right.
\end{equation}
admits a unique absolutely continuous solution \(x_{0,x_0}(t) \in AC([0,T];\mathbb R_+^2)\). Furthermore, there exists a constant \(C_T>0\), depending only on \(T\) and the structural constants, such that
\begin{equation}
\label{eq:trajectory-growth}
|x_{0,x_0}(t)| \le C_T \left(1+|x_0| \right), \qquad t\in[0,T].
\end{equation}
\end{proposition}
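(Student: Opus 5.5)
The plan is to treat \eqref{eq:closed-loop-system} as a Carath\'eodory ODE with a right-hand side that is measurable in \(t\) and globally Lipschitz in \(x\). We then run the classical Picard--Lindel\"of argument on \([0,T]\) and finish with Gr\"onwall's inequality for the growth bound \eqref{eq:trajectory-growth}. Since \(\widehat F^m\) is only defined on \(\mathbb R_+^2\), we first extend it to all of \(\mathbb R^2\) by setting \(\widetilde F^m(t,x):=\widehat F^m(t,\Pi_{\mathbb R_+^2}(x))\), where \(\Pi_{\mathbb R_+^2}\) is the (\(1\)-Lipschitz) Euclidean projection onto the closed convex set \(\mathbb R_+^2\). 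By (E2), \(\widetilde F^m\) is measurable in \(t\) and \(L_F\)-Lipschitz in \(x\), uniformly in \(t\in[0,T]\) and \(m\). By (H4) together with \(0\le\widehat u^{\,m}\le1\) and \(|\Pi_{\mathbb R_+^2}(x)|\le|x|\), it also satisfies \(|\widetilde F^m(t,x)|\le C(1+|x|)\).

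\textbf{Existence and uniqueness.} We work in \(C([0,T];\mathbb R^2)\) with the weighted norm \(\|y\|_\lambda=\sup_t e^{-\lambda t}|y(t)|\) and \(\lambda>L_F\). Define the Picard map \(\Phi(y)(t)=x_0+\int_0^t\widetilde F^m(s,y(s))\,\diff s\). The integrand is measurable, being a Carath\'eodory function composed with a continuous curve, and it is integrable by the linear growth bound, so \(\Phi\) maps \(C([0,T];\mathbb R^2)\) into absolutely continuous curves. The Lipschitz bound gives \(\|\Phi(y_1)-\Phi(y_2)\|_\lambda\le (L_F/\lambda)\|y_1-y_2\|_\lambda\), so \(\Phi\) is a contraction. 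Banach's fixed-point theorem therefore yields a unique absolutely continuous solution of the extended equation on \([0,T]\). The same contraction estimate, or equivalently Gr\"onwall applied to the difference of two solutions, gives uniqueness among all absolutely continuous solutions.

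\textbf{Invariance of \(\mathbb R_+^2\).} This is the step I expect to be the main obstacle. We must show the solution stays in \(\mathbb R_+^2\), so that it solves the original equation \eqref{eq:closed-loop-system} with \(\widetilde F^m=\widehat F^m\) along it. The \(z\)-component is harmless: \(\dot z=K(m(t),\Pi(x)_z)\,\widehat u^{\,m}\ge0\), so \(z(t)\ge z_0\ge0\). The \(b\)-component is the delicate one, because \(w(z)-c(u)\) may be negative. On \(\mathbb R_+^2\setminus\Gamma\) the feedback is optimal, hence admissible, so I would argue that along the closed loop the optimal feedback realises an admissible control from \(x_0\), which keeps \(b\ge0\). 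To make this rigorous I would invoke the DPP (Theorem~\ref{thm:DPP}) and Lemma~\ref{lem:concatenation}, using that optimal trajectories remain in \(\mathcal K(t)\). For the post-qualification part on \(\Gamma\), the natural choice \(\bar u\equiv0\) gives \(\dot b=w(z)-c(0)\ge0\) provided \(c(0)\le w\), which I would add as a normalisation if needed.

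If this invariance cannot be obtained directly from the structure, the fallback is to read the statement through the projected field \(\widetilde F^m\). Either (E2) is understood as implicitly encoding that \(\widehat F^m\) points inward on \(\{b=0\}\), or the solution is interpreted as the solution of the extended equation. The tangent-cone condition \(\widehat F^m_b(t,(z,0))\ge0\) together with the Lipschitz bound then yields invariance by a standard Nagumo-type argument: compare \(b^-(t)\) with \(\int_0^t L_F\, b^-(s)\,\diff s\) and apply Gr\"onwall to conclude \(b^-\equiv0\).

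\textbf{Growth bound.} From the integral equation and the linear growth bound, \(|x(t)|\le |x_0|+CT+C\int_0^t|x(s)|\,\diff s\). Gr\"onwall's inequality then gives \(|x(t)|\le(|x_0|+CT)e^{CT}\le C_T(1+|x_0|)\) with \(C_T=\max\{1,CT\}e^{CT}\). This constant depends only on \(T\) and the structural constants in (H4), and not on \(m\) or \(x_0\), which proves \eqref{eq:trajectory-growth}.
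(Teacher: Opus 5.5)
Your existence--uniqueness and growth steps are the paper's proof made explicit. The paper simply invokes the Carath\'eodory theorem under (E2) and then Gr\"onwall, and your projected field \(\widehat F^m(t,\Pi_{\mathbb R_+^2}(x))\) with a weighted-norm Picard iteration is a correct way to carry this out on all of \(\mathbb R^2\). The paper never checks that the solution stays in \(\mathbb R_+^2\), so you are right that the conclusion \(x_{0,x_0}\in AC([0,T];\mathbb R_+^2)\) needs an extra step. The gap is in how you propose to supply that step.

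\textbf{Your primary invariance argument fails.}
\begin{itemize}
\item The feedback \(u^{*,m}\) is only a measurable selection of Hamiltonian minimizers at (generalized) gradients of \(V^m\). Nothing shows that the closed-loop curve is an optimal trajectory. That would be a verification/synthesis result, needing e.g.\ \(\frac{\diff}{\diff t}V^m(t,x(t))=-1\) along the curve.
\item Theorem~\ref{thm:DPP} and Lemma~\ref{lem:concatenation} concern open-loop controls. They only say that continuations of admissible or optimal controls stay admissible or optimal; they do not say that following the feedback produces such a control.
\item The argument says nothing for \(x_0\notin\mathcal R(0)\), where \(V^m=+\infty\) and the feedback is arbitrary.
\item It cannot work at all for states that are not viable on \([0,T]\). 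The closed-loop solution is a trajectory of \eqref{eq:control-system} with the measurable control \(t\mapsto\widehat u^{\,m}(t,x(t))\), so it stays in \(\mathbb R_+^2\) only if some control does.
\end{itemize}
For a concrete counterexample, take \(K\equiv1\), \(w(z)=\min\{z,2\}\), \(c\equiv1\) and \(\widehat u^{\,m}\equiv1\). This is a valid minimizer selection, since \(V^m\) is nonincreasing in \(z\), so every generalized gradient has \(p_z\le0\). All of (H1)--(H4) and (E1)--(E2) hold. Yet from \(x_0=(0,0)\), every control gives \(z(t)\le t\) and \(b(t)\le t^2/2-t<0\) for \(t\in(0,2)\). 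So invariance of \(\mathbb R_+^2\) does not follow from the stated hypotheses.

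\textbf{How to repair it.} Your Nagumo fallback is the right fix, but it must be stated as an added hypothesis rather than read into (E2). Possible forms:
\begin{itemize}
\item Require \(w(z)\ge c\bigl(\widehat u^{\,m}(t,(z,0))\bigr)\) for all \(t\) and \(z\ge0\). Then, for the projected field, \(\dot b\ge0\) whenever \(b<0\), so \(b^-\) is nonincreasing from \(0\) and vanishes. This forces, in particular, \(w(0)\ge\min_{[0,1]}c\).
\item Restrict \(x_0\) and the support of \(m_0\) to a set that is forward invariant under the closed loop.
\item Drop the \(\mathbb R_+^2\)-valuedness and state the result for the projected field on \(\mathbb R^2\).
\end{itemize}
Also note that the choice \(\bar u\equiv0\) generally makes \(\widehat u^{\,m}\), and hence \(\widehat F^m_z\), jump across \(\{z=z^\star\}\). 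That is incompatible with (E2), so it cannot be used freely in the invariance step on \(\Gamma\) either.
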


\begin{proof}
Let us first set  \(x(t) = x_{0,x_0}(t)\), when there is no ambiguity. By assumption {\rm (E2)}, the vector field \(\widehat F^m\) is measurable with respect to \(t\), globally Lipschitz with respect to the state variable, and satisfies the linear growth estimate established above. Therefore, the Carathéodory existence and uniqueness theorem (see, e.g.\@ \cite[Chapter 1, Theorem 1.1]{CoddingtonLevinson1955}) yields a unique absolutely continuous solution of \eqref{eq:closed-loop-system} on \([0,T]\). To prove \eqref{eq:trajectory-growth}, the linear growth estimate gives \(|\dot x(t)| \le C \left( 1+|x(t)| \right)\). Integrating this inequality and applying Gronwall's lemma,
we obtain
\[
|x(t)| \le \left(|x_0| + CT \right) e^{CT},
\]
which is equivalent to \eqref{eq:trajectory-growth} after redefining the constant \(C_T\).
\end{proof}
Proposition~\ref{prop:closed-loop-wellposed} provides a globally defined characteristic flow associated with every measurable population \(m\). This flow will be used to construct the fixed-point operator governing the evolution of the population distribution. We thus now construct the fixed-point operator associated with the MFG equilibrium problem. We consider a candidate population flow \(m\in\mathcal C_T\), for which let $V^m$ denote the value function of the individual optimal control problem, and let $\widehat u^{\,m}$ be the measurable feedback introduced  previously, while the corresponding closed-loop vector field is \(\widehat F^m\). The extended closed-loop vector field $\widehat F^m$ satisfies the Carathéodory conditions and with the global Lipschitz estimate imposed in {\rm (E2)}, which shows that for every initial condition $x_0\in\mathbb R_+^2$, the closed-loop system~\eqref{eq:closed-loop-system} admits a unique absolutely continuous solution. For every $0\le s\le t\le T$, we denote by \(\Phi^m_{s,t}(x_0)\) the corresponding flow map, that is, \(\Phi^m_{s,t}(x_0) = x(t)\) where $x(\cdot)$ is the unique solution of \eqref{eq:closed-loop-system} satisfying $x(s)=x_0$, by which the family $\bigl(\Phi^m_{s,t}\bigr)_{0\le s\le t\le T}$ satisfies the semigroup property
\[
\Phi^m_{s,t} = \Phi^m_{r,t}\circ\Phi^m_{s,r}, \qquad 0\le s\le r\le t\le T,
\]
and for \(s=t\), we denote $\Phi^m_{t,t}=\mathrm{Id}$. The evolution of the population is obtained by transporting the initial distribution $m_0$ through the flow generated by the closed-loop dynamics.

\begin{definition}[Population operator]\label{def: population operator}
For a measurable population flow $m\in\mathcal C_T$, let $\Phi^m_{s,t}$ denote the characteristic flow generated by the extended velocity field $\widehat F^m$. The \emph{population operator} is the mapping \(\mathcal T\colon\mathcal C_T\longrightarrow\mathcal C_T\), defined by \(\mathcal T(m)=\widetilde m\), where the population flow $\widetilde m$ is given by the push-forward representation
\begin{equation}\label{eq:push-forward}
\widetilde m(t)=\Phi^m_{0,t}\# m_0, \qquad t\in[0,T].
\end{equation}
Equivalently, by the definition of the push-forward, one observes that, for every Borel set $A\subset\mathbb R_+^2$, \(\widetilde m(t)(A) = m_0\!\left((\Phi^m_{0,t})^{-1}(A)\right)\).
\end{definition}

As mentioned before, the finite horizon \(T\) refers to the time interval on which the population fixed-point problem is constructed and does not represent a terminal time for the individual minimum-time problem. In particular, the value function associated with a population flow is determined by the population flow beyond time \(T\). Therefore, a population flow \(m\in\mathcal C_T\) alone does not contain sufficient information to define the infinite-horizon value function. To make the finite-horizon fixed-point construction well defined, we fix, for each \(T>0\), an extension operator \(\mathfrak E_T\colon \mathcal C_T\longrightarrow\mathcal C_\infty\) with \(\mathcal C_\infty = C\bigl([0,+\infty);\mathcal P_1(\mathbb R_+^2)\bigr)\), such that \((\mathfrak E_Tm)(t)=m(t)\), for \(t\in[0,T]\), while \(\mathfrak E_Tm\) is constant after time \(T\), namely \((\mathfrak E_Tm)(t)=m(T)\) for \(t>T\). For a measurable population flow \(m\in\mathcal C_T\), the individual minimum-time problem is then understood with the complete population flow \(\mathfrak E_Tm\). Its infinite-horizon value function and selected optimal feedback are still denoted by \(V^m\) and \(u^{*,m}\), respectively, and the finite-horizon population operator, still denoted by \(\mathcal T\colon\mathcal C_T\longrightarrow \mathcal C_T\), is defined by restricting to \([0,T]\) the population dynamics generated by the closed-loop system associated with the individual optimal feedback for \(\mathfrak E_T m\). Thus, the parameter \(T\) should be interpreted as the horizon of the population fixed-point construction, rather than as a terminal time for the individual minimum-time problem. With this convention, the fixed-point argument concerns only the population trajectory on \([0,T]\), while the individual optimization problem remains an infinite-horizon problem. In particular, whenever \(V^m\), \(u^{*,m}\), or the corresponding closed-loop dynamics are considered below, the dependence on the extension \(\mathfrak E_Tm\) is understood implicitly.

The push-forward representation of the population measures coincides with the continuity equation associated with the velocity field $\widehat F^m$. Thus, the MFG equilibrium problem can be formulated as the fixed-point equation \(m=\mathcal T(m)\). A fixed point is a measurable distribution of the population that once the individual optimization problem is defined, it generates, through the optimal feedback, exactly the same measurable distribution.

\begin{definition}[MFG equilibrium]\label{def: MFG equilibrium}
Let $T>0$. A population flow $m\in\mathcal C_T$ is called an MFG \emph{equilibrium} on \([0, T]\), if \(\mathcal T(m)=m\), where $\mathcal T$ is the population operator defined in Definition~\ref{def: population operator}. Equivalently, if $V^m$ denotes the value function associated with the population flow $m$ and $u^{*,m}$ denotes the corresponding optimal feedback, then the population flow $m$ is generated by the optimal closed-loop dynamics \(\dot x(t) = \widehat F^m(t,x(t))\) and satisfies \(m(t)=\Phi^m_{0,t}\# m_0\), for \(t\in[0,T]\). In particular, $V^m$ and $u^{*,m}$ provide the individual optimality conditions associated with the equilibrium population flow.
\end{definition}

The remainder of this part is devoted to establishing the existence of such a fixed point by means of Schauder's fixed-point theorem (see, \cite[Theorem~11.2]{Zeidler1986}). The existence of a fixed point for the operator $\mathcal T$ requires uniform estimates on the family of population trajectories generated by the extended closed-loop dynamics. In this part, we establish the fundamental growth, moment, and continuity estimates that will be used in the compactness argument. We first show that the population operator admits a measure which satisfies the continuity equation in the weak sense.

\begin{proposition}[Weak formulation of the continuity equation]\label{prop:weak-continuity}
Assume that {\rm (H1)--(H4)} and {\rm (E1)--(E2)} hold. Let $m\in\mathcal C_T$, and  $\widetilde m=\mathcal T(m)$
be defined by \eqref{eq:push-forward}. Then $\widetilde m$ is the unique weak solution of
\begin{equation}\label{eq:continuity-equation-T}
\left\{
\begin{aligned}
\partial_t\widetilde m(t) &+ \diverg_{x} \bigl(\widehat F^m(t,x)\,\widetilde m(t) \bigr)  = 0,
&&
\text{in }(0,T)\times\mathbb R_+^2,
\\
\widetilde m(0)&  = m_0.
\end{aligned}
\right.
\end{equation}
More precisely, for every test function $\varphi\in C_c^1([0,T)\times\mathbb R_+^2)$,
\begin{equation}\label{eq:weak-formulation-T}
\int_0^T \int_{\mathbb R_+^2}\left(\partial_t\varphi(t,x) +
\nabla_x\varphi(t,x) \cdot \widehat F^m(t,x) \right) \,\widetilde m(t,\diff x) \,\diff t + \int_{\mathbb R_+^2} \varphi(0,x) \,m_0(\diff x) = 0.
\end{equation}
\end{proposition}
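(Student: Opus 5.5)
The proof has two parts: first we verify that the push-forward $\widetilde m(t)=\Phi^m_{0,t}\# m_0$ satisfies \eqref{eq:weak-formulation-T}, and then we show uniqueness by a duality argument built on the characteristic flow.

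\emph{Existence.} Fix $\varphi\in C_c^1([0,T)\times\mathbb R_+^2)$. By definition of the push-forward,
\[
\int_{\mathbb R_+^2}\varphi(t,x)\,\widetilde m(t,\diff x)=\int_{\mathbb R_+^2}\varphi\bigl(t,\Phi^m_{0,t}(x)\bigr)\,m_0(\diff x).
\]
By Proposition~\ref{prop:closed-loop-wellposed}, for each $x$ the curve $t\mapsto \Phi^m_{0,t}(x)$ is absolutely continuous and solves \eqref{eq:closed-loop-system}. Hence $t\mapsto\varphi(t,\Phi^m_{0,t}(x))$ is absolutely continuous, and for a.e.\ $t$ its derivative is
\[
\partial_t\varphi+\nabla_x\varphi\cdot\widehat F^m\bigl(t,\Phi^m_{0,t}(x)\bigr).
\]
We integrate over $[0,T]$. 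Since $\varphi(T,\cdot)=0$, this gives $-\varphi(0,x)$. We then integrate in $x$ against $m_0$ and use Fubini. Fubini is justified because the integrand is bounded by $\|\varphi\|_{C^1}\,C(1+C_T(1+|x|))$, by the linear growth of $\widehat F^m$ and \eqref{eq:trajectory-growth}, and this bound is $m_0$-integrable since $m_0\in\mathcal P_1$. Joint measurability in $(t,x)$ follows from continuity of the flow in $x$ (Gronwall, as in Proposition~\ref{prop:continuous_dependence}) and measurability in $t$. Changing variables back through the push-forward yields \eqref{eq:weak-formulation-T}. The same bound shows that $\widetilde m(t)\in\mathcal P_1$. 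Continuity in $W_1$ follows from $W_1(\widetilde m(t),\widetilde m(s))\le\int|\Phi^m_{0,t}(x)-\Phi^m_{0,s}(x)|\,m_0(\diff x)\le C|t-s|(1+\int|x|\,\diff m_0)$. Together these show $\widetilde m\in\mathcal C_T$.

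\emph{Uniqueness.} Let $\mu\in\mathcal C_T$ be another weak solution with $\mu(0)=m_0$. We take the standard route for Lipschitz fields, namely duality with the backward transport equation. Fix $\psi\in C_c^1((0,T)\times\mathbb R_+^2)$ and set
\[
\varphi(t,x)=-\int_t^T\psi\bigl(s,\Phi^m_{t,s}(x)\bigr)\,\diff s,
\]
which formally solves $\partial_t\varphi+\widehat F^m\cdot\nabla\varphi=\psi$ with $\varphi(T,\cdot)=0$. Inserting $\varphi$ into \eqref{eq:weak-formulation-T} for both $\mu$ and $\widetilde m$ and subtracting gives $\int\!\!\int\psi\,\diff(\mu-\widetilde m)\,\diff t=0$ for all such $\psi$. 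Hence $\mu=\widetilde m$.

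\emph{Main obstacle.} $\widehat F^m$ is only measurable in $t$ and Lipschitz in $x$, so $\varphi$ is merely Lipschitz in $(t,x)$ and not $C_c^1$. Also, $\varphi$ need not be compactly supported, although it is bounded and has bounded gradient, because $\Phi^m_{t,s}$ is bi-Lipschitz and the backward flow keeps the support bounded. The plan is as follows.
\begin{enumerate}
\item Mollify $\widehat F^m$ in $x$ to obtain $F_\varepsilon$, build $\varphi_\varepsilon$ from the flow of $F_\varepsilon$, and extend \eqref{eq:weak-formulation-T} to $W^{1,\infty}$ test functions with bounded support by density. This extension uses that $\mu(t)$ has uniformly bounded first moments on $[0,T]$.
\item Pass to the limit using $\int_0^T\!\!\int |F_\varepsilon-\widehat F^m|\,|\nabla\varphi_\varepsilon|\,\diff\mu\,\diff t\to0$. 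This limit holds because $F_\varepsilon\to\widehat F^m$ locally uniformly with the uniform bound $\|\nabla\varphi_\varepsilon\|_\infty\le Ce^{L_FT}$.
\end{enumerate}

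Two further points need attention. First, the boundary of $\mathbb R_+^2$: we need the flow to stay in $\mathbb R_+^2$, which is implicit in Proposition~\ref{prop:closed-loop-wellposed} and is used here as given. Second, the weak formulation is tested on $\mathbb R_+^2$, including boundary points. As an alternative, we could invoke the superposition principle of Ambrosio–Gigli–Savaré: any weak solution is the law of solutions of the ODE, and since the ODE has unique trajectories, that law is concentrated on the flow. This would give uniqueness directly.
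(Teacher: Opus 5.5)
Your proposal is correct and follows the paper's route. Existence comes from the chain rule along the characteristics $t\mapsto\Phi^m_{0,t}(x)$, integrated in time and against $m_0$. Uniqueness comes from the standard theory for continuity equations with fields Lipschitz in $x$: the paper simply cites this from Ambrosio--Gigli--Savar\'e (Chapter~8), and you unpack it as the backward-transport duality argument with spatial mollification (or via superposition).

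One step should be phrased more carefully: the density extension. Do not state it for arbitrary $W^{1,\infty}$ test functions, since their gradients need not be defined $\mu(t)$-a.e.\ when $\mu(t)$ is singular. State it for bounded-support functions that are $C^1$ in $x$ and Lipschitz in $t$, which is exactly the class your $\varphi_\varepsilon$ belongs to.
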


\begin{proof}
Let $\widetilde m(t)=\Phi^m_{0,t}\#m_0$. For $\varphi\in C_c^1([0,T)\times\mathbb R_+^2)$, the chain rule along the characteristics gives
\[
\frac{\diff}{\diff t} \varphi\bigl(t,\Phi^m_{0,t}(x)\bigr) = \partial_t\varphi\bigl(t,\Phi^m_{0,t}(x)\bigr) + \nabla_x\varphi\bigl(t,\Phi^m_{0,t}(x)\bigr) \cdot \widehat F^m \bigl(t,\Phi^m_{0,t}(x)\bigr),
\]
for almost every $t\in[0,T]$. Since $\varphi(T,\cdot)=0$, integration from $0$ to $T$ and integration with respect to $m_0$ yields \eqref{eq:weak-formulation-T}. 
The push-forward representation shows that \(\widetilde m\) is a weak solution of the continuity equation associated with the characteristic flow. Under the present regularity assumptions on \(\widehat F^m\), uniqueness of weak solutions follows from the standard well-posedness theory for continuity equations with globally Lipschitz velocity fields (see, e.g.\@\cite[Chapter 8]{AmbrosioGigliSavare2008}). Hence, \(\widetilde m\) is the unique weak solution of \eqref{eq:continuity-equation-T}.
\end{proof}

\begin{proposition}[Linear growth of trajectories]
\label{prop:trajectory-growth}
Assume that {\rm (H1)--(H4)} and {\rm (E1)--(E2)} hold. There exists a constant $C_T>0$, depending only on $T$, the growth constants of $K$ and $w$, and the bound on the feedback,
such that every solution of the closed-loop system
\(\dot x(t)=\widehat F^m(t,x(t))\), with \(x(0)=x_0\in\mathbb R_+^2\), satisfies
\[
|x(t)| \le C_T\bigl(1+|x_0|\bigr), \qquad t\in[0,T].
\]
\end{proposition}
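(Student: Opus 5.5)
The estimate is a uniform version of the bound \eqref{eq:trajectory-growth} in Proposition~\ref{prop:closed-loop-wellposed}. The point to check is that the constant does not depend on the population flow \(m\in\mathcal C_T\), on the selected feedback \(u^{*,m}\), or on the post-qualification extension \(\bar u\). The plan is to derive a linear growth bound for \(\widehat F^m\) whose constant depends only on structural data, and then to apply Gr\"onwall's lemma along the closed-loop trajectory.

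\emph{Step 1 (uniform linear growth of the field).} Since \(0\le \widehat u^{\,m}(t,x)\le 1\) for all \((t,x)\), assumption (H4) applies pointwise with \(u=\widehat u^{\,m}(t,x)\) and gives
\[
|\widehat F^m(t,x)| = \bigl|f\bigl(t,x,m(t),\widehat u^{\,m}(t,x)\bigr)\bigr| \le C_f(1+|x|).
\]
Here \(C_f\) is independent of \(m\), \(t\) and the feedback. If one prefers to trace the constant to the primitives, as the statement suggests, one can argue directly instead. By (H1), \(K(\mu,z)\le K(\mu,0)+L_Kz\). The term \(K(\mu,0)\) still has to be bounded uniformly in \(\mu\); on \(\mathcal P_1\) this would follow from (E1) together with a bound on \(W_1(\mu,\delta_0)\), but the cleanest route is simply to invoke (H4). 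Similarly, (H2) gives \(w(z)\le w(0)+L_wz\), and \(c\) is bounded on \([0,1]\) by (H3). Either way we obtain \(|\widehat F^m(t,x)|\le C(1+|x|)\) with \(C\) uniform.

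\emph{Step 2 (Gr\"onwall).} By Proposition~\ref{prop:closed-loop-wellposed}, the trajectory \(x(\cdot)\) is absolutely continuous on \([0,T]\), so
\[
|x(t)|\le |x_0|+\int_0^t C\bigl(1+|x(s)|\bigr)\,\diff s \le |x_0|+CT+C\int_0^t|x(s)|\,\diff s .
\]
Gr\"onwall's lemma then yields \(|x(t)|\le (|x_0|+CT)e^{CT}\le C_T(1+|x_0|)\) with \(C_T:=\max\{1,CT\}e^{CT}\).

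\emph{Main obstacle.} There is essentially no analytic difficulty; the only delicate point is bookkeeping of the uniformity. One must make sure that (H4) is genuinely uniform in the measure argument. It is stated as ``independent of the measurable population flow'', and that remains valid for the extended flow \(\mathfrak E_T m\). One must also check that the extension \(\bar u\) on \(\Gamma\) takes values in \([0,1]\), so that the same bound covers trajectories after qualification. With these two checks, the constant depends only on \(T\) and the structural constants.
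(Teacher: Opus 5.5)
Your proposal is correct and follows essentially the same route as the paper. Assumption (H4), together with $0\le\widehat u^{\,m}\le 1$, gives a linear growth bound on $\widehat F^m$ that is uniform in $m$, and Gr\"onwall's inequality then yields $|x(t)|\le(|x_0|+CT)e^{CT}\le C_T(1+|x_0|)$; your explicit constant $C_T=\max\{1,CT\}e^{CT}$ and your remarks on uniformity with respect to $\mathfrak E_T m$ and $\bar u$ are slightly more careful bookkeeping of the same argument.
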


\begin{proof}
By (H4) and the bound \(0\le \widehat u^{\,m}\le1\), the extended
closed-loop vector field satisfies \(|\widehat F^m(t,x)|\le C(1+|x|)\), for some constant \(C>0\) independent of \(m\). Hence, along any
solution \(x(\cdot)\), we have \(|\dot x(t)|\le C(1+|x(t)|)\) for almost every \(t\in[0,T]\). Grönwall's inequality therefore yields
\[
|x(t)| \le \left(|x_0|+Ct\right)e^{Ct} \le C_T(1+|x_0|),
\qquad t\in[0,T],
\]
for some constant \(C_T>0\) depending only on \(T\) and \(C\).
\end{proof}

The next estimate controls the dependence of trajectories on the initial condition.

\begin{proposition}[Stability of trajectories] \label{prop:trajectory-stability}
Assume that {\rm (H1)--(H4)} and {\rm (E1)--(E2)} hold. Then there exists a constant $L_T>0$ such that
\[
|x_{0,x_0}(t)-x_{0,y_0}(t)| \le L_T|x_0-y_0|, \qquad t\in[0,T],
\]
for all initial states
$x_0,y_0\in\mathbb R_+^2$.
\end{proposition}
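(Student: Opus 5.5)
The estimate follows from the uniform Lipschitz bound (E2) on the extended closed-loop field together with Grönwall's lemma, in the same way as Proposition~\ref{prop:continuous_dependence}. The one difference is that the feedback is now evaluated along each trajectory instead of being a common open-loop control. Fix \(m\in\mathcal C_T\) and \(x_0,y_0\in\mathbb R_+^2\). By Proposition~\ref{prop:closed-loop-wellposed}, the closed-loop system~\eqref{eq:closed-loop-system} has unique absolutely continuous solutions \(x(t)=x_{0,x_0}(t)\) and \(y(t)=x_{0,y_0}(t)\) on \([0,T]\). In integral form,
\[
x(t)-y(t) = x_0-y_0 + \int_0^t \bigl(\widehat F^m(s,x(s))-\widehat F^m(s,y(s))\bigr)\diff s, \qquad t\in[0,T].
\]

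We then apply (E2) pointwise in \(s\). For every \(s\in[0,T]\), \(|\widehat F^m(s,x(s))-\widehat F^m(s,y(s))|\le L_F|x(s)-y(s)|\). This step is legitimate because both trajectories stay in \(\mathbb R_+^2\), the set on which (E2) is stated; that is exactly how the flow \(\Phi^m_{s,t}\) was set up, with values in \(\mathbb R_+^2\). We therefore obtain
\[
|x(t)-y(t)|\le |x_0-y_0| + L_F\int_0^t |x(s)-y(s)|\diff s .
\]
The function \(t\mapsto|x(t)-y(t)|\) is continuous, so Grönwall's lemma gives \(|x(t)-y(t)|\le e^{L_F t}|x_0-y_0|\le e^{L_F T}|x_0-y_0|\). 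The claim follows with \(L_T:=e^{L_FT}\). Since \(L_F\) in (E2) is uniform in \(t\) and \(m\), the constant \(L_T\) is independent of \(m\in\mathcal C_T\) and of the initial data. This uniformity is what the later compactness and continuity arguments for \(\mathcal T\) need.

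The only real obstacle is conceptual rather than computational. The closed-loop field \(\widehat F^m\) is built from a merely measurable selection \(u^{*,m}\) of Hamiltonian minimizers and the post-qualification feedback \(\bar u\). Without further information it need not be Lipschitz in \(x\), particularly across \(\partial\Gamma\) or where \(\nabla V^m\) fails to exist. In that case the estimate could not be derived from (H1)--(H4) alone, because the open-loop argument of Proposition~\ref{prop:continuous_dependence} does not apply when the two trajectories use different controls. Assumption (E2) is imposed precisely to rule this out, so the proof simply invokes it and makes no attempt to deduce Lipschitz regularity of the feedback from the structure of \(V^m\).
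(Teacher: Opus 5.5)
Your proof is correct and follows the paper's argument: the uniform Lipschitz bound in (E2) combined with Grönwall's lemma gives \(L_T=e^{L_F T}\). The only difference is that you use the integral form, whereas the paper writes a differential inequality for \(|x_{0,x_0}(t)-x_{0,y_0}(t)|\); yours is slightly cleaner, since that norm need not be differentiable where it vanishes.
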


\begin{proof}
The trajectories satisfy \(\frac{\diff}{\diff t} \bigl(x_{0,x_0}(t)-x_{0,y_0}(t)\bigr) = \widehat F^m(t,x_{0,x_0}(t)) - \widehat F^m(t,x_{0,y_0}(t))\), and the Lipschitz property gives
\[
\frac{d}{dt} |x_{0,x_0}(t)-x_{0,y_0}(t)| \le L |x_{0,x_0}(t)-x_{0,y_0}(t)|.
\]
Gronwall's inequality yields \(|x_{0,x_0}(t)-x_{0,y_0}(t)| \le e^{Lt}|x_0-y_0|\), by which the result follows with setting $L_T=e^{LT}$.
\end{proof}

The preceding stability result provides the main continuity property of the characteristic flows with respect to the measurable population distribution. For the fixed-point construction, we also need  a priori uniform estimates on the population flows generated by the closed-loop dynamics. In particular, these estimates must hold uniformly with respect to the candidate population flow \(m\) in the class on which the fixed-point operator will be defined. Three properties will be used to obtain the compactness required by Schauder's fixed-point theorem, consist of a uniform bound on the first moments, a uniform control of the tails of the population distributions, and a uniform modulus of continuity with respect to time. The first two properties ensure relative compactness in \(\mathcal P_1(\mathbb R_+^2)\), while the third provides equicontinuity of the corresponding population trajectories in the Wasserstein
metric. We establish these estimates in the following three results.


\begin{lemma}[Uniform first-moment estimate]
\label{lem:uniform-first-moment}
Assume that {\rm (H1)--(H4)} and {\rm (E1)--(E2)} hold. Then there exists a constant $C_T>0$, depending only on $T$, the initial measure $m_0$, and the growth constants of the closed-loop vector field, such that
\[
\sup_{m\in\mathcal C_T}\sup_{t\in[0,T]}
\int_{\mathbb R_+^2} |x|\,(\mathcal T(m))(t,\diff x)
\le C_T.
\]
\end{lemma}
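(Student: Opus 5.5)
The estimate follows by combining the push-forward representation \eqref{eq:push-forward} with the linear growth bound of Proposition~\ref{prop:trajectory-growth}. Fix \(m\in\mathcal C_T\) and \(t\in[0,T]\), and write \(\widetilde m=\mathcal T(m)\). The function \(x\mapsto|x|\) is nonnegative and continuous. The change-of-variables formula for push-forward measures therefore holds without any integrability assumption:
\[
\int_{\mathbb R_+^2}|x|\,\widetilde m(t,\diff x)=\int_{\mathbb R_+^2}\bigl|\Phi^m_{0,t}(x_0)\bigr|\,m_0(\diff x_0).
\]
This reduces the estimate to a pointwise bound on the characteristic flow, integrated against \(m_0\).

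For the pointwise bound we invoke Proposition~\ref{prop:trajectory-growth}, which gives \(|\Phi^m_{0,t}(x_0)|\le C_T(1+|x_0|)\) for all \(x_0\in\mathbb R_+^2\) and \(t\in[0,T]\). The key point is that this constant does not depend on \(m\). It depends only on \(T\) and on the constant \(C\) in the closed-loop growth estimate \(|\widehat F^m(t,x)|\le C(1+|x|)\). That growth estimate comes from (H4), whose constant \(C_f\) is independent of the population flow, together with the bound \(0\le\widehat u^{\,m}\le1\). Integrating against \(m_0\) yields
\[
\int_{\mathbb R_+^2}|x|\,\widetilde m(t,\diff x)\le C_T\Bigl(1+\int_{\mathbb R_+^2}|x_0|\,m_0(\diff x_0)\Bigr).
\]
The right-hand side is finite because \(m_0\in\mathcal P_1(\mathbb R_+^2)\), and it does not depend on \(m\) or \(t\). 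Taking the supremum over \(t\in[0,T]\) and \(m\in\mathcal C_T\), and renaming the constant, gives the claim. The resulting constant depends only on \(T\), on the first moment of \(m_0\), and on the growth constants.

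The only step that needs care is checking that the constant is genuinely uniform in \(m\). In particular, the implicit extension \(\mathfrak E_Tm\) used to define \(V^m\) and \(u^{*,m}\) must not enter the bound. It does not: the feedback only appears through its range \([0,1]\), and (H4) is uniform over all measurable flows and all \(u\in[0,1]\). No Lipschitz property from (E2) is needed here, beyond the well-posedness of the flow \(\Phi^m_{0,t}\) itself.
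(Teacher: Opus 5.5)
Your proposal is correct and is essentially the paper's own proof. You use the push-forward identity $\int|x|\,\mathcal T(m)(t,\diff x)=\int|\Phi^m_{0,t}(x)|\,m_0(\diff x)$ together with the $m$-independent linear growth bound of Proposition~\ref{prop:trajectory-growth}, which yields $C_T\bigl(1+\int|x|\,m_0(\diff x)\bigr)$. Your added remarks are accurate details the paper leaves implicit: that the change of variables needs no integrability for a nonnegative integrand, and that the extension $\mathfrak E_T m$ does not enter the constant.
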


\begin{proof}
Let $m\in\mathcal C_T$ and set $\widetilde m=\mathcal T(m)$. By the definition of the fixed-point operator,
we have \(\widetilde m(t)=(\Phi^m_{0,t})_{\#}m_0\). Hence,
\[
\int_{\mathbb R_+^2}|x|\,\widetilde m(t,\diff x) =
\int_{\mathbb R_+^2}|\Phi^m_{0,t}(x)|\,m_0(\diff x).
\]
By the uniform linear growth estimate for the closed-loop trajectories, stated in Proposition~\ref{prop:trajectory-growth}, there exists a constant $C_T>0$, independent of $m\in\mathcal C_T$, such that
\(|\Phi^m_{0,t}(x)|\le C_T(1+|x|)\), for \(t\in[0,T]\).
Therefore,
\[
\int_{\mathbb R_+^2}|x|\,\widetilde m(t,\diff x) \le
C_T\left(1+\int_{\mathbb R_+^2}|x|\,m_0(\diff x) \right),
\]
and the right-hand side is independent of $m$ and $t$, which proves the claim.
\end{proof}

The next result provides uniform control of the tails of the first moments, in which, for \(r>0\), we denote the closed ball by \(B_r=\{x\in\mathbb R_+^2\suchthat |x|\le r\}\).

\begin{lemma}[Uniform tail estimate]
\label{lem:uniform-tail}
Assume that {\rm (H1)--(H4)} and {\rm (E1)--(E2)} hold. Then there exists a decreasing function
\(\omega_T^{\mathrm{tail}}\colon[0,+\infty)\longrightarrow[0,+\infty)\) with \(\omega_T^{\mathrm{tail}}(R)\longrightarrow 0\) as \(R\longrightarrow+\infty\),
such that
\[
\sup_{m\in\mathcal C_T}\sup_{t\in[0,T]} \int_{\mathbb R_+^2\setminus B_R} |x|\,(\mathcal T(m))(t,\diff x)
\le \omega_T^{\mathrm{tail}}(R), \qquad R>0.
\]
\end{lemma}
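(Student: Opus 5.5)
The tail estimate will follow from the push-forward representation together with the uniform linear growth of trajectories in Proposition~\ref{prop:trajectory-growth}. That proposition gives the uniform bound $|\Phi^m_{0,t}(x)|\le C_T(1+|x|)$ for all $m\in\mathcal C_T$ and $t\in[0,T]$. Fix $m\in\mathcal C_T$, set $\widetilde m=\mathcal T(m)$, and write the tail integral via \eqref{eq:push-forward} as
\[
\int_{\mathbb R_+^2\setminus B_R}|x|\,\widetilde m(t,\diff x)=\int_{\mathbb R_+^2}|\Phi^m_{0,t}(y)|\,\mathbbm 1_{\{|\Phi^m_{0,t}(y)|>R\}}\,m_0(\diff y).
\]

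The key step is to turn the event $\{|\Phi^m_{0,t}(y)|>R\}$ into a condition on the initial point alone. If $|\Phi^m_{0,t}(y)|>R$, then $C_T(1+|y|)>R$, so $|y|>R/C_T-1$. Let $\rho(R):=\max\{R/C_T-1,0\}$. Then the integrand is bounded by $C_T(1+|y|)\mathbbm 1_{\{|y|>\rho(R)\}}$, uniformly in $m$ and $t$. This gives the candidate
\[
\omega_T^{\mathrm{tail}}(R):=C_T\int_{\{|y|>\rho(R)\}}(1+|y|)\,m_0(\diff y),
\]
which is independent of $m$ and $t$. It is nonincreasing in $R$ because $\rho$ is nondecreasing and the domains of integration shrink.

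It remains to check that $\omega_T^{\mathrm{tail}}(R)\to0$ as $R\to+\infty$. Since $m_0\in\mathcal P_1(\mathbb R_+^2)$, the function $1+|y|$ is $m_0$-integrable. The indicators $\mathbbm 1_{\{|y|>\rho(R)\}}$ tend to $0$ pointwise as $\rho(R)\to\infty$. Dominated convergence therefore gives the limit. This is the only place where the finite first moment of $m_0$ is used.

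I do not expect a real obstacle. The one point needing care is that the constant $C_T$ must be independent of $m$. This holds because the growth bound in (H4), and hence the one in Proposition~\ref{prop:trajectory-growth}, does not depend on the population flow, and $0\le\widehat u^{\,m}\le 1$. If strict decrease is required rather than monotonicity, one can replace $\omega_T^{\mathrm{tail}}$ by $\omega_T^{\mathrm{tail}}(R)+e^{-R}$, which is strictly decreasing and still tends to zero.
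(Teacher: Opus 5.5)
Your proof is correct and follows the paper's argument essentially verbatim. You push forward via \eqref{eq:push-forward}, use the $m$-independent growth bound $|\Phi^m_{0,t}(y)|\le C_T(1+|y|)$ to turn the event $\{|\Phi^m_{0,t}(y)|>R\}$ into a condition on $y$, and take $\omega_T^{\mathrm{tail}}(R)=C_T\int_{\{|y|>\rho(R)\}}(1+|y|)\,m_0(\diff y)$, which tends to zero by dominated convergence. One small nit, which is the paper's ``harmless modification for small $R$'': when $R<C_T$ the inclusion $\{|\Phi^m_{0,t}(y)|>R\}\subset\{|y|>\rho(R)\}$ can fail at $y=0$, which matters if $m_0$ charges the origin. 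Integrating over $\{|y|\ge\rho(R)\}$ instead fixes this and keeps both the monotonicity and the limit.
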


\begin{proof}
Let $m\in\mathcal C_T$ and set $\widetilde m=\mathcal T(m)$. Using the push-forward representation,
\[
\int_{\mathbb R_+^2\setminus B_R} |x|\,\widetilde m(t,\diff x) = \int_{\{|\Phi^m_{0,t}(x)|>R\}}
|\Phi^m_{0,t}(x)|\,m_0(\diff x).
\]
By the uniform linear growth estimate, we have\(|\Phi^m_{0,t}(x)|\le C_T(1+|x|)\). Consequently,
\(\{|\Phi^m_{0,t}(x)|>R\} \subset \left\{|x|>\big(\frac{R}{C_T}-1\big)_+ \right\}\), and therefore
\[
\int_{\mathbb R_+^2\setminus B_R} |x|\,\widetilde m(t,\diff x) \le C_T \int_{\{|x|>(R/C_T-1)_+\}} (1+|x|)\,m_0(\diff x).
\]
Since $m_0\in\mathcal P_1(\mathbb R_+^2)$, the right-hand side converges to zero as $R\longrightarrow +\infty$, uniformly with respect to $m\in\mathcal C_T$ and $t\in[0,T]$. Defining, for instance,
\[
\omega_T^{\mathrm{tail}}(R) := C_T \int_{\{|x|>(R/C_T-1)_+\}}
(1+|x|)\,m_0(\diff x),
\]
with a harmless modification for small $R$, gives the desired estimate.
\end{proof}

The next result provides equicontinuity of the population trajectories in the Wasserstein metric.

\begin{lemma}[Uniform temporal equicontinuity]
\label{lem:uniform-equicontinuity}
Assume that {\rm (H1)--(H4)} and {\rm (E1)--(E2)} hold. Then there exists a modulus of continuity
\(\omega_T^{\mathrm{time}}\colon[0,+\infty)\longrightarrow[0,+\infty)\) with \(\omega_T^{\mathrm{time}}(r)\longrightarrow 0\) as \(r\longrightarrow0^+\), such that
\[
\sup_{m\in\mathcal C_T} W_1\bigl((\mathcal T(m))(t),(\mathcal T(m))(s)\bigr) \le \omega_T^{\mathrm{time}}(|t-s|), \quad \forall\, s,t\in[0,T].
\]
\end{lemma}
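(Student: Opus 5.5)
The plan is to couple the two time marginals through the characteristic flow itself. For \(m\in\mathcal C_T\) and \(\widetilde m=\mathcal T(m)\), the push-forward representation \eqref{eq:push-forward} shows that the map \(x\mapsto(\Phi^m_{0,t}(x),\Phi^m_{0,s}(x))\) pushes \(m_0\) onto a coupling of \(\widetilde m(t)\) and \(\widetilde m(s)\). By the definition of \(W_1\) as an infimum over couplings, this gives, for \(0\le s\le t\le T\),
\[
W_1\bigl(\widetilde m(t),\widetilde m(s)\bigr)\le\int_{\mathbb R_+^2}\bigl|\Phi^m_{0,t}(x)-\Phi^m_{0,s}(x)\bigr|\,m_0(\diff x).
\]
This reduces the statement to a pathwise estimate along closed-loop trajectories, integrated against the fixed measure \(m_0\).

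Next, I would bound the integrand. Since \(r\mapsto\Phi^m_{0,r}(x)\) is absolutely continuous and solves \eqref{eq:closed-loop-system}, I would write
\[
\bigl|\Phi^m_{0,t}(x)-\Phi^m_{0,s}(x)\bigr|\le\int_s^t\bigl|\widehat F^m\bigl(r,\Phi^m_{0,r}(x)\bigr)\bigr|\,\diff r.
\]
The linear growth bound \(|\widehat F^m(r,y)|\le C(1+|y|)\) follows from (H4) and \(0\le\widehat u^{\,m}\le1\), with \(C\) independent of \(m\). Combining it with Proposition~\ref{prop:trajectory-growth}, which gives \(|\Phi^m_{0,r}(x)|\le C_T(1+|x|)\), yields
\[
\bigl|\Phi^m_{0,t}(x)-\Phi^m_{0,s}(x)\bigr|\le C(1+C_T)(1+|x|)\,|t-s|.
\]

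Integrating this against \(m_0\in\mathcal P_1(\mathbb R_+^2)\) gives the Lipschitz modulus
\[
\omega_T^{\mathrm{time}}(r):=C(1+C_T)\Bigl(1+\int_{\mathbb R_+^2}|x|\,m_0(\diff x)\Bigr)\,r .
\]
It is independent of \(m\), and it tends to \(0\) as \(r\to0^+\).

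The only delicate point is uniformity in \(m\). I must make sure that the constants \(C\) and \(C_T\) do not depend on the candidate flow, including its extension \(\mathfrak E_Tm\) used to define the feedback. (H4) is stated with \(C_f\) independent of the population flow, and the feedback is bounded by \(1\) on all of \([0,T]\times\mathbb R_+^2\), including the post-qualification part \(\bar u\). So I expect this check to go through directly. No Lipschitz regularity from (E2) is needed beyond the existence of the flow.
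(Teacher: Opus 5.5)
Your proposal is correct and follows essentially the same route as the paper. It couples $\widetilde m(t)$ and $\widetilde m(s)$ via $\bigl(\Phi^m_{0,t},\Phi^m_{0,s}\bigr)_{\#}m_0$, bounds the displacement along characteristics using the $m$-independent linear growth of $\widehat F^m$ together with Proposition~\ref{prop:trajectory-growth}, and integrates against $m_0\in\mathcal P_1(\mathbb R_+^2)$ to obtain a Lipschitz modulus. Your explicit constant $C(1+C_T)$ is simply more careful bookkeeping of what the paper absorbs into a renamed $C_T$.
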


\begin{proof}
Let $m\in\mathcal C_T$ and set $\widetilde m=\mathcal T(m)$. Since \(\widetilde m(t)=(\Phi^m_{0,t})_{\#}m_0\), the coupling induced by $m_0$ gives
\[
W_1(\widetilde m(t),\widetilde m(s)) \le \int_{\mathbb R_+^2} |\Phi^m_{0,t}(x)-\Phi^m_{0,s}(x)| \,m_0(\diff x).
\]
Using the integral formulation of the characteristics and the uniform linear growth estimate,
\[
|\Phi^m_{0,t}(x)-\Phi^m_{0,s}(x)| \le \int_s^t \left|\widehat F^m(\tau,\Phi^m_{0,\tau}(x))\right|
\,\diff\tau \le C_T(1+|x|)|t-s|.
\]
Hence,
\[
W_1(\widetilde m(t),\widetilde m(s)) \le C_T \left(
1+\int_{\mathbb R_+^2}|x|\,m_0(\diff x) \right)|t-s|.
\]
Therefore, by defining
\[
\omega_T^{\mathrm{time}}(r) := C_T \left(1+\int_{\mathbb R_+^2}|x|\,m_0(\diff x) \right)r,
\]
we obtain the desired result, in which the estimate is uniform with respect to $m\in\mathcal C_T$.
\end{proof}

The preceding estimates identify the three uniform properties that will be imposed on the class of candidate population flows used in the
fixed-point argument. The first-moment estimate provides uniform control of the mass in the first moment, the tail estimate provides uniform integrability, and the temporal estimate provides equicontinuity in the Wasserstein metric. Together, these properties
are precisely those required to obtain compactness in
\(\mathcal C_T\). This motivates the following definition of the invariant class on which the fixed-point argument will be carried out.

\begin{definition}[Compact class of population flows]
\label{def:MT}
Let \(C_T>0\). Let \(\omega_T^{\mathrm{tail}}\colon [0,+\infty)\longrightarrow[0,+\infty)\) satisfy
\(\omega_T^{\mathrm{tail}}(R)\longrightarrow 0\) as \(R\longrightarrow+\infty\), and let
\(\omega_T^{\mathrm{time}}\colon [0,+\infty)\longrightarrow[0,+\infty)\) satisfy
\(\omega_T^{\mathrm{time}}(0)=0\) and \(\omega_T^{\mathrm{time}}(r)\longrightarrow 0\) as \(r\longrightarrow0^+\). We define \(\mathcal M_T\subset\mathcal C_T\) as the set of population flows
\(m\in\mathcal C_T\) satisfying the following properties.
\begin{itemize}
     \item \textbf{Uniform first-moment bound:} 
    \(\sup_{t\in[0,T]} \int_{\mathbb R_+^2}|x|\,m(t,\mathrm dx) \le C_T\).
    
    \item \textbf{Uniform tail estimate:}
    \(\sup_{t\in[0,T]}
    \int_{\mathbb R_+^2\setminus B_R}
    |x|\,m(t,\mathrm dx) \le \omega_T^{\mathrm{tail}}(R)\), for all \(R>0\).
    
    \item \textbf{Uniform temporal continuity:}
    \(W_1(m(t),m(s)) \le \omega_T^{\mathrm{time}}(|t-s|)\), for all \(s,t\in[0,T]\).
\end{itemize}
The first condition provides a finite-horizon uniform bound on the first moments, while the second condition is a finite-horizon uniform integrability condition that guarantees tightness in \(\mathcal P_1(\mathbb R_+^2)\). At the end, the third condition provides equicontinuity of the population flows in the Wasserstein metric.
\end{definition}

\begin{proposition}[Invariance of \(\mathcal M_T\) under \(\mathcal T\)]
\label{prop:MT-invariance}
Assume that {\rm (H1)--(H4)} and {\rm (E1)--(E2)} hold. Then \(\mathcal T(\mathcal C_T)\subseteq\mathcal M_T\). In particular, \(\mathcal T(\mathcal M_T)\subseteq\mathcal M_T\).
\end{proposition}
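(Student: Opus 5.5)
The plan is to check that every image flow \(\widetilde m=\mathcal T(m)\), with \(m\in\mathcal C_T\) arbitrary, satisfies the three defining properties of \(\mathcal M_T\) in Definition~\ref{def:MT}. This works only if the parameters \(C_T\), \(\omega_T^{\mathrm{tail}}\), \(\omega_T^{\mathrm{time}}\) in that definition are taken to be exactly those produced by Lemmas~\ref{lem:uniform-first-moment}, \ref{lem:uniform-tail} and~\ref{lem:uniform-equicontinuity}. I would state this choice explicitly at the start, since the definition leaves them free. None of these constants depends on \(m\): they depend only on \(T\), on \(m_0\), and on the constant \(C\) in the linear growth bound \(|\widehat F^m(t,x)|\le C(1+|x|)\). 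That bound comes from (H4) together with \(0\le\widehat u^{\,m}\le1\), and is uniform in \(m\).

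First, I would confirm that \(\mathcal T(m)\) actually lies in \(\mathcal C_T\). Each \(\widetilde m(t)=\Phi^m_{0,t}\#m_0\) is a probability measure on \(\mathbb R_+^2\). Lemma~\ref{lem:uniform-first-moment} shows that its first moment is finite, so \(\widetilde m(t)\in\mathcal P_1(\mathbb R_+^2)\). The Lipschitz-in-time estimate of Lemma~\ref{lem:uniform-equicontinuity} then gives continuity of \(t\mapsto\widetilde m(t)\) in \(W_1\).

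Next come the three properties.
\begin{enumerate}
\item The uniform first-moment bound is exactly Lemma~\ref{lem:uniform-first-moment}.
\item The tail bound for every \(R>0\) is Lemma~\ref{lem:uniform-tail}.
\item The temporal modulus is Lemma~\ref{lem:uniform-equicontinuity}, applied with \(\omega_T^{\mathrm{time}}(r)=C_T(1+\int|x|\,m_0(\diff x))\,r\). This choice satisfies \(\omega_T^{\mathrm{time}}(0)=0\), as the definition requires.
\end{enumerate}
Since all three bounds hold uniformly over \(m\in\mathcal C_T\), we get \(\mathcal T(\mathcal C_T)\subseteq\mathcal M_T\). Because \(\mathcal M_T\subseteq\mathcal C_T\), the inclusion \(\mathcal T(\mathcal M_T)\subseteq\mathcal M_T\) follows at once.

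The main obstacle is consistency of the constants. The three lemmas each use a symbol \(C_T\), but these are different constants (the growth constant versus the moment bound). I would fix one \(C_T\) as the maximum of the relevant constants, and define \(\omega_T^{\mathrm{tail}}\) with the "harmless modification for small \(R\)". For small \(R\), the tail integral is bounded by the total first moment, so I would set \(\omega_T^{\mathrm{tail}}(R)\) to be at least that moment bound there; this keeps the function decreasing with the right limit. I would also double-check that the characteristic flow keeps \(\mathbb R_+^2\) invariant, so that the push-forward is supported in \(\mathbb R_+^2\). This is implicit in Proposition~\ref{prop:closed-loop-wellposed}, and I would simply invoke it.
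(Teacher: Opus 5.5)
Your proposal is correct and follows the paper's proof. For arbitrary \(m\in\mathcal C_T\), the paper checks the three defining properties of \(\mathcal M_T\) for \(\mathcal T(m)\) directly via Lemmas~\ref{lem:uniform-first-moment}, \ref{lem:uniform-tail} and~\ref{lem:uniform-equicontinuity}, whose bounds are uniform in \(m\), and then deduces \(\mathcal T(\mathcal M_T)\subseteq\mathcal M_T\) from \(\mathcal M_T\subseteq\mathcal C_T\); your additional points (taking the parameters of Definition~\ref{def:MT} from the lemmas, reconciling the different constants called \(C_T\), adjusting \(\omega_T^{\mathrm{tail}}\) for small \(R\), and citing Proposition~\ref{prop:closed-loop-wellposed} for the flow staying in \(\mathbb R_+^2\)) only make explicit what the paper leaves tacit.
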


\begin{proof}
Let \(m\in\mathcal C_T\) and set \(\widetilde m=\mathcal T(m)\). By Lemma~\ref{lem:uniform-first-moment}, we deduce 
\(\sup_{t\in[0,T]} \int_{\mathbb R_+^2}|x|\,\widetilde m(t,\diff x) \le C_T\). By Lemma~\ref{lem:uniform-tail}, we observe that
\[
\sup_{t\in[0,T]} \int_{\mathbb R_+^2\setminus B_R} |x|\,\widetilde m(t,\diff x) \le \omega_T^{\mathrm{tail}}(R), \qquad R>0.
\]
And finally, by Lemma~\ref{lem:uniform-equicontinuity},
\(W_1(\widetilde m(t),\widetilde m(s)) \le \omega_T^{\mathrm{time}}(|t-s|)\), for \(s,t\in[0,T]\). Thus \(\widetilde m\) satisfies all three defining properties of \(\mathcal M_T\), and hence \(\widetilde m\in\mathcal M_T\). Therefore \(\mathcal T(\mathcal C_T)\subseteq\mathcal M_T\), and the invariance of \(\mathcal M_T\) follows immediately.
\end{proof}

We now show the convexity and compactness of \(\mathcal M_T\).

\begin{proposition}[Convexity and compactness of $\mathcal M_T$]
\label{prop:MT-convex-compact}
Assume that {\rm (H1)--(H4)} and {\rm (E1)--(E2)} hold. The set $\mathcal M_T$ is convex and compact in
$\mathcal C_T$ endowed with the metric \(d_T\).
\end{proposition}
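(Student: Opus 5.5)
Convexity and compactness will be handled separately.

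\emph{Convexity.} Fix $m^1,m^2\in\mathcal M_T$, $\lambda\in[0,1]$, and set $m^\lambda(t)=\lambda m^1(t)+(1-\lambda)m^2(t)$. The first-moment and tail functionals $\mu\mapsto\int|x|\,\mu(\diff x)$ and $\mu\mapsto\int_{\mathbb R_+^2\setminus B_R}|x|\,\mu(\diff x)$ are linear in $\mu$, so both bounds pass to $m^\lambda(t)$ for every $t$. For the temporal condition, use the joint convexity of $W_1$: given optimal couplings $\pi^i$ of $m^i(t)$ and $m^i(s)$, the mixture $\lambda\pi^1+(1-\lambda)\pi^2$ couples $m^\lambda(t)$ and $m^\lambda(s)$. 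This gives $W_1(m^\lambda(t),m^\lambda(s))\le\lambda W_1(m^1(t),m^1(s))+(1-\lambda)W_1(m^2(t),m^2(s))\le\omega_T^{\mathrm{time}}(|t-s|)$. The same mixture argument with $s$ near $t$ shows $m^\lambda\in\mathcal C_T$. Hence $m^\lambda\in\mathcal M_T$.

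\emph{Compactness.} Since $(\mathcal C_T,d_T)$ is a metric space, it suffices to show that $\mathcal M_T$ is closed and relatively compact. For relative compactness, apply the Arzel\`a--Ascoli theorem for continuous maps from the compact interval $[0,T]$ into the complete metric space $(\mathcal P_1(\mathbb R_+^2),W_1)$. Two ingredients are needed. Equicontinuity is exactly the third condition of Definition~\ref{def:MT}. Pointwise relative compactness requires that $\mathcal K:=\{\mu\in\mathcal P_1 : \int_{\mathbb R_+^2\setminus B_R}|x|\,\mu(\diff x)\le\omega_T^{\mathrm{tail}}(R)\ \forall R\}$ be relatively compact in $W_1$; this is the step I expect to need the most care. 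Tightness follows from Markov's inequality: $\mu(\mathbb R_+^2\setminus B_R)\le R^{-1}\omega_T^{\mathrm{tail}}(R)\to0$ uniformly on $\mathcal K$. Prokhorov's theorem then gives narrow subsequential limits. The uniform integrability of $|x|$ provided by the tail bound upgrades narrow convergence to $W_1$ convergence, using the standard characterization that $W_1$ convergence is equivalent to narrow convergence plus convergence of first moments (\cite[Chapter~7]{AmbrosioGigliSavare2008}). The first moments converge because $\int|x|\,\mu_n=\int(|x|\wedge R)\,\mu_n+O(\omega_T^{\mathrm{tail}}(R))$, and $|x|\wedge R$ is bounded and continuous.

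\emph{Closedness.} Let $m^n\to m$ in $d_T$ with $m^n\in\mathcal M_T$. Then $m^n(t)\to m(t)$ in $W_1$, hence narrowly, for each $t$. The temporal bound passes to the limit by the triangle inequality for $W_1$. For the tail bound, note that $x\mapsto|x|\mathbf 1_{\{|x|>R\}}$ is lower semicontinuous, since the set $\{|x|>R\}$ is open. The Portmanteau lower semicontinuity of integrals of nonnegative lower semicontinuous functions then yields $\int_{\mathbb R_+^2\setminus B_R}|x|\,m(t,\diff x)\le\liminf_n\int_{\mathbb R_+^2\setminus B_R}|x|\,m^n(t,\diff x)\le\omega_T^{\mathrm{tail}}(R)$. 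Here the tail is integrated over $\{|x|>R\}$, consistent with $B_R$ being closed. The first-moment bound is preserved because first moments converge under $W_1$ convergence.

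Combining these, Arzel\`a--Ascoli shows that $\mathcal M_T$ is relatively compact in $\mathcal C_T$, and together with closedness it is compact. Nonemptiness is not part of the statement, but it follows from Proposition~\ref{prop:MT-invariance}, provided the constants of Definition~\ref{def:MT} are chosen as in Lemmas~\ref{lem:uniform-first-moment}--\ref{lem:uniform-equicontinuity}.
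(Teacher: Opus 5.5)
Your proof is correct and follows essentially the same route as the paper. Convexity comes from linearity of the moment and tail functionals together with joint convexity of $W_1$. Compactness comes from Arzel\`a--Ascoli (Markov tightness, uniform integrability upgrading narrow to $W_1$ convergence, and the temporal modulus for equicontinuity), plus closedness via Portmanteau lower semicontinuity. The only differences are cosmetic. You derive tightness from the tail bound rather than the first-moment bound, and you preserve the first-moment bound through continuity of first moments under $W_1$ rather than lower semicontinuity.
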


\begin{proof}
We first prove convexity. Let \(m^1,m^2\in\mathcal M_T\) and let \(\lambda\in[0,1]\). Define \(m^\lambda(t) = \lambda m^1(t)+(1-\lambda)m^2(t)\).
Since the first moment is linear with respect to the measure,
\[
\int |x|\,m^\lambda(t,\mathrm dx) = \lambda\int |x|\,m^1(t,\mathrm dx) + (1-\lambda)\int |x|\,m^2(t,\mathrm dx),
\]
the first-moment bound is preserved. The tail estimate is also preserved by convexity,
\[
\int_{\mathbb R_+^2\setminus B_R} |x|\,m^\lambda(t,\mathrm dx) \le \lambda\omega_T^{\mathrm{tail}}(R) + (1-\lambda)\omega_T^{\mathrm{tail}}(R) = \omega_T^{\mathrm{tail}}(R).
\]
Finally, the Wasserstein distance is convex with respect to convex combinations of measures. Hence,
\[
W_1(m^\lambda(t),m^\lambda(s)) \le \lambda W_1(m^1(t),m^1(s)) + (1-\lambda) W_1(m^2(t),m^2(s))
\le \omega_T^{\mathrm{time}}(|t-s|).
\]
Therefore, \(m^\lambda\in\mathcal M_T\).

We next prove compactness. We first establish relative compactness. Fix $t\in[0,T]$ and consider the family
\(\mathcal F_t = \bigl\{m(t)\suchthat m\in\mathcal M_T\bigr\} \subset\mathcal P_1(\mathbb R_+^2)\). The uniform first-moment bound, in the definition of \(\mathcal M_T\), implies tightness of $\mathcal F_t$. Indeed, by Markov's inequality,
\[
\sup_{m\in\mathcal M_T} m(t)(\mathbb R_+^2\setminus B_R) \le \frac{1}{R} \sup_{m\in\mathcal M_T} \int_{\mathbb R_+^2}|x|\,m(t,\diff x) \le \frac{C_T}{R},
\]
which tends to zero as $R\longrightarrow +\infty$. Moreover, the uniform tail estimate gives uniform integrability of the first moments
\[
\sup_{m\in\mathcal M_T} \int_{\mathbb R_+^2\setminus B_R} |x|\,m(t,\diff x) \le \omega_T^{\mathrm{tail}}(R)
\longrightarrow 0 \qquad\text{as }R\longrightarrow +\infty.
\]
Consequently, the family $\mathcal F_t$ is relatively compact in $(\mathcal P_1(\mathbb R_+^2),W_1)$. The temporal continuity estimate provides a common modulus of continuity for the family $\mathcal M_T$, \(W_1\bigl(m(t),m(s)\bigr) \le \omega_T^{\mathrm{time}}(|t-s|)\), for \(m\in\mathcal M_T\). Thus, by the Arzel\`a--Ascoli theorem for metric-valued functions, the family $\mathcal M_T$ is relatively compact in $\mathcal C_T$ endowed with $d_T$. It remains to prove that $\mathcal M_T$ is closed. Let $(m_n)_{n\in\mathbb N}\subset\mathcal M_T$ and suppose that \(d_T(m_n,m)\longrightarrow 0\) for some $m\in\mathcal C_T$. Then, for every $t\in[0,T]$, \(m_n(t)\longrightarrow m(t)\) in \((\mathcal P_1(\mathbb R_+^2),W_1)\). By lower semicontinuity of the first moment,
\[
\int_{\mathbb R_+^2}|x|\,m(t,\diff x) \le \liminf_{n\to\infty} \int_{\mathbb R_+^2}|x|\,m_n(t,\diff x) \le C_T.
\]
Likewise, for every \(R>0\), since \(B_R\) is closed, the function \(x\longmapsto |x|\mathbf 1_{\mathbb R_+^2\setminus B_R}(x)\) is nonnegative and lower semicontinuous. Hence, by the Portmanteau theorem (see, e.g.\@\cite[Theorem~2.1]{Billingsley1999}),
\[
\int_{\mathbb R_+^2\setminus B_R}|x|\,m(t,\diff x) \le
\liminf_{n\to\infty} \int_{\mathbb R_+^2\setminus B_R}|x|\,m_n(t,\diff x) \le \omega_T^{\mathrm{tail}}(R).
\]
Finally, since $W_1$ is continuous with respect to both of its arguments,
\[
W_1\bigl(m(t),m(s)\bigr) = \lim_{n\to\infty} W_1\bigl(m_n(t),m_n(s)\bigr) \le \omega_T^{\mathrm{time}}(|t-s|), \quad \text{ for all } s,t\in[0,T].
\]
Therefore $m\in\mathcal M_T$, and $\mathcal M_T$ is closed in $\mathcal C_T$. Since $\mathcal M_T$ is both relatively compact and closed, it is compact in $(\mathcal C_T,d_T)$.
\end{proof}


The fixed-point argument does not require a stability result for the value functions themselves. In particular, since the Hamilton--Jacobi equation is understood in the viscosity sense, we do not assume that the value functions are differentiable, nor do we deduce convergence of their gradients from convergence of the value functions. What is needed for the fixed-point construction is the stability of the selected optimal feedback entering the closed-loop dynamics. We therefore impose the following assumption.
\begin{enumerate}
    \item[(E3)] \textbf{Stability of the selected optimal feedback.}
    Let \(m_n,m\) be measurable population flows such that \(m_n\longrightarrow m\) uniformly in the Wasserstein distance on every compact time interval. Let \(u^{*,m_n}\) and \(u^{*,m}\) denote the selected optimal feedbacks associated with the corresponding individual minimal-time problems. Then \(u^{*,m_n}\longrightarrow u^{*,m}\) locally uniformly on \([0,+\infty)\times(\mathbb R_+^2\setminus\Gamma)\). Moreover, the corresponding extended feedbacks satisfy
    \(\widehat u^{\,m_n}\longrightarrow \widehat u^{\,m}\), locally uniformly on \([0,+\infty)\times\mathbb R_+^2\).
\end{enumerate}

Assumption {\rm (E3)} is a stability assumption on the selected optimal feedback rather than on the value function. It allows the fixed-point operator to be studied without requiring classical differentiability of
the value function or convergence of its spatial gradients.

\begin{remark}
    Assumptions {\rm (E1)--(E3)} are imposed directly on the population flow and on the closed-loop dynamics induced by the individual optimization problem. In particular, {\rm (E2)--(E3)} are conditions on the regularity and stability of the optimal feedback and the associated velocity field, rather than primitive assumptions stated directly in terms of \(K\), \(w\), and \(c\). Such properties can be obtained in more regular settings from additional regularity of the value function together with suitable non-degeneracy and uniqueness conditions for the Hamiltonian minimizer. Establishing general primitive conditions guaranteeing {\rm (E2)--(E3)} is beyond the scope of the present work. The assumptions are satisfied in particular classes of regular models and are adopted here to isolate the fixed-point argument from the separate regularity analysis of the individual control problem.
\end{remark}

\begin{lemma}[Stability of the closed-loop dynamics]
\label{lem:closed-loop-stability}
Assume that {\rm (H1)--(H4)} and {\rm (E1)--(E3)} hold. Let \(m_n\) and \(m\) be measurable population flows such that \(\sup_{t\in[0,T]}W_1(m_n(t),m(t))\longrightarrow 0\). Then the corresponding extended closed-loop vector fields satisfy \(\widehat F^{m_n}\longrightarrow \widehat F^m\), locally uniformly on \([0,T]\times\mathbb R_+^2\). Moreover, if \(\Phi^{m_n}_{s,t}\) and \(\Phi^m_{s,t}\) denote the corresponding characteristic flows, then
\[
\sup_{t\in[0,T]} \left|\Phi^{m_n}_{0,t}(x)-\Phi^m_{0,t}(x)\right| \longrightarrow 0,
\]
locally uniformly with respect to the initial condition \(x\).

\end{lemma}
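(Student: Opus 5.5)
The plan is to prove the two assertions in order: first the local uniform convergence of the vector fields, then the convergence of the characteristic flows by a Grönwall estimate that uses the uniform Lipschitz bound from {\rm (E2)}.

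\emph{Step 1: convergence of the vector fields.} Fix a compact set $Q=[0,T]\times(B_R\cap\mathbb R_+^2)$. By \eqref{eq:compact_time_dependent_dynamics}, for $(t,x)\in Q$ with $x=(z,b)$ we split
\[
\widehat F^{m_n}(t,x)-\widehat F^m(t,x)=
\begin{pmatrix}
\bigl(K(m_n(t),z)-K(m(t),z)\bigr)\widehat u^{\,m_n}(t,x)+K(m(t),z)\bigl(\widehat u^{\,m_n}-\widehat u^{\,m}\bigr)(t,x)\\[1mm]
-\bigl(c(\widehat u^{\,m_n}(t,x))-c(\widehat u^{\,m}(t,x))\bigr)
\end{pmatrix}.
\]
The first term is bounded by $L_m\sup_{t}W_1(m_n(t),m(t))$ by {\rm (E1)}, since $0\le\widehat u^{\,m_n}\le1$. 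For the second term, $K(m(t),z)$ is bounded on $Q$ by (H4). The extended flows $\mathfrak E_Tm_n\to\mathfrak E_Tm$ converge uniformly on every compact interval, because they are constant after $T$. Hence {\rm (E3)} gives $\widehat u^{\,m_n}\to\widehat u^{\,m}$ uniformly on $Q$, so the second term tends to zero. The third term is bounded by $\mathrm{Lip}(c)\,\sup_Q|\widehat u^{\,m_n}-\widehat u^{\,m}|$ by (H3). Together these give $\sup_Q|\widehat F^{m_n}-\widehat F^m|\to0$.

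\emph{Step 2: convergence of the flows.} Fix $R_0>0$ and $|x|\le R_0$. By Proposition~\ref{prop:trajectory-growth}, every trajectory $\Phi^{m_n}_{0,t}(x)$ and $\Phi^m_{0,t}(x)$ stays in the ball $B_{R_1}$ with $R_1=C_T(1+R_0)$, uniformly in $n$. Set $e_n(t)=|\Phi^{m_n}_{0,t}(x)-\Phi^m_{0,t}(x)|$. Writing both trajectories in integral form and adding and subtracting $\widehat F^{m_n}(s,\Phi^m_{0,s}(x))$, we get
\[
e_n(t)\le\int_0^t L_F\,e_n(s)\,\diff s+T\sup_{[0,T]\times B_{R_1}}|\widehat F^{m_n}-\widehat F^m|,
\]
where the Lipschitz constant $L_F$ is uniform in $n$ by {\rm (E2)}. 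Grönwall's inequality then yields
\[
\sup_{t\in[0,T]}e_n(t)\le Te^{L_FT}\sup_{[0,T]\times B_{R_1}}|\widehat F^{m_n}-\widehat F^m|.
\]
The right-hand side is independent of $x\in B_{R_0}$ and tends to zero by Step 1, which gives the claimed local uniform convergence.

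\emph{Main obstacle.} The delicate point is Step 1, namely using {\rm (E3)} on the full closed set $[0,T]\times\mathbb R_+^2$ including the boundary of $\Gamma$, where the extended feedback is pieced together. Since {\rm (E3)} postulates local uniform convergence of the extended feedbacks on $[0,+\infty)\times\mathbb R_+^2$, this is handled directly by the assumption. One only has to make sure that the hypothesis of {\rm (E3)} is checked for the extended flows $\mathfrak E_Tm_n$ rather than for $m_n$ on $[0,T]$ alone; this holds because constant extension preserves uniform convergence.
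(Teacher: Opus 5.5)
Your proof is correct and follows the same route as the paper: {\rm (E1)} together with {\rm (E3)} gives local uniform convergence of $\widehat F^{m_n}$ to $\widehat F^m$, and the common Lipschitz constant from {\rm (E2)} then gives convergence of the flows. You make explicit what the paper leaves implicit. This includes the Gr\"onwall estimate behind its appeal to the ``continuous dependence theorem'' and the confinement of trajectories to a fixed ball via Proposition~\ref{prop:trajectory-growth}. It also includes the check that {\rm (E3)} applies because $\mathfrak E_T m_n\to\mathfrak E_T m$ uniformly on every compact time interval.
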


\begin{proof}
By {\rm (E1)}, one observes that \(\sup_{t\in[0,T]}
\left|K(m_n(t),z)-K(m(t),z)\right| \longrightarrow 0\), locally uniformly with respect to \(z\). Assumption {\rm (E3)} gives the local uniform convergence of the corresponding extended feedbacks.
Together with the continuity of the controlled vector field, this yields \(\widehat F^{m_n}\longrightarrow \widehat F^m\), locally uniformly on \([0,T]\times\mathbb R_+^2\). Since, by {\rm (E2)}, the vector fields are globally Lipschitz with respect
to the state variable with a common Lipschitz constant, the continuous
dependence theorem for Carath\'eodory differential equations yields
\[
\sup_{t\in[0,T]} \left|\Phi^{m_n}_{0,t}(x)-\Phi^m_{0,t}(x)\right| \longrightarrow 0,
\]
uniformly for \(x\) in compact subsets of \(\mathbb R_+^2\).
\end{proof}

\begin{proposition}[Continuity of the fixed-point operator]
\label{prop:T-continuity}
Assume that {\rm (H1)--(H4)} and {\rm (E1)--(E3)} hold. Then
\(\mathcal T\colon \mathcal C_T\longrightarrow\mathcal C_T\) is continuous
with respect to the metric \(d_T\).
\end{proposition}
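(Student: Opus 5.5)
Take a sequence \(m_n\to m\) in \((\mathcal C_T,d_T)\), set \(\widetilde m_n=\mathcal T(m_n)\) and \(\widetilde m=\mathcal T(m)\), and show \(d_T(\widetilde m_n,\widetilde m)\to0\). Since \(\mathcal C_T\) is a metric space, sequential continuity suffices. The approach is to couple \(\widetilde m_n(t)\) and \(\widetilde m(t)\) through the common initial measure \(m_0\) via the push-forward representation \eqref{eq:push-forward}. The transport plan \((\Phi^{m_n}_{0,t},\Phi^m_{0,t})_\# m_0\) belongs to \(\Pi(\widetilde m_n(t),\widetilde m(t))\), so for every \(t\in[0,T]\)
\[
W_1\bigl(\widetilde m_n(t),\widetilde m(t)\bigr)\le\int_{\mathbb R_+^2}\bigl|\Phi^{m_n}_{0,t}(x)-\Phi^m_{0,t}(x)\bigr|\,m_0(\diff x).
\]
Taking the supremum in \(t\) inside the integral gives
\[
d_T(\widetilde m_n,\widetilde m)\le\int_{\mathbb R_+^2}g_n(x)\,m_0(\diff x),\qquad g_n(x):=\sup_{t\in[0,T]}\bigl|\Phi^{m_n}_{0,t}(x)-\Phi^m_{0,t}(x)\bigr|.
\]
It therefore remains to prove that \(\int g_n\,\diff m_0\to0\).

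For this I split the integral over a ball \(B_R\) and its complement. On \(B_R\), Lemma~\ref{lem:closed-loop-stability} (which applies because \(d_T(m_n,m)\to0\) and, through the extension \(\mathfrak E_T\), \(\mathfrak E_T m_n\to\mathfrak E_T m\) uniformly on \([0,+\infty)\), so (E3) is triggered) gives \(g_n\to0\) uniformly on the compact set \(B_R\). Hence \(\int_{B_R}g_n\,\diff m_0\le\sup_{B_R}g_n\to0\). On the complement, Proposition~\ref{prop:trajectory-growth} supplies a bound uniform in \(n\):
\[
g_n(x)\le|\Phi^{m_n}_{0,t}(x)|+|\Phi^m_{0,t}(x)|\le2C_T(1+|x|).
\]
Consequently
\[
\int_{\mathbb R_+^2\setminus B_R}g_n\,\diff m_0\le2C_T\int_{\{|x|>R\}}(1+|x|)\,m_0(\diff x),
\]
and this tends to \(0\) as \(R\to\infty\) uniformly in \(n\), because \(m_0\in\mathcal P_1(\mathbb R_+^2)\). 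Given \(\varepsilon>0\), I first choose \(R\) so that the tail term is below \(\varepsilon/2\), and then \(n\) large so that the \(B_R\) term is below \(\varepsilon/2\). Alternatively, dominated convergence with the dominating function \(2C_T(1+|x|)\) and pointwise convergence \(g_n\to0\) gives the same conclusion directly.

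The main obstacle is the input from Lemma~\ref{lem:closed-loop-stability}, that is, obtaining uniform-in-time convergence of the flows from the merely locally uniform convergence of \(\widehat F^{m_n}\). I would re-derive it with a Grönwall argument, using the common Lipschitz constant \(L_F\) from (E2):
\[
|\Phi^{m_n}_{0,t}(x)-\Phi^m_{0,t}(x)|\le\int_0^t L_F|\Phi^{m_n}_{0,s}(x)-\Phi^m_{0,s}(x)|\,\diff s+\int_0^T\bigl|\widehat F^{m_n}-\widehat F^m\bigr|\bigl(s,\Phi^m_{0,s}(x)\bigr)\diff s.
\]
The last integral is evaluated only along the trajectory \(\Phi^m_{0,\cdot}(x)\). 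By Proposition~\ref{prop:trajectory-growth}, for \(x\in B_R\) this trajectory stays in the compact ball \(B_{C_T(1+R)}\), where local uniform convergence applies. Grönwall then yields \(\sup_{x\in B_R}g_n(x)\le e^{L_FT}\sup_{[0,T]\times B_{C_T(1+R)}}|\widehat F^{m_n}-\widehat F^m|\to0\). The remaining routine point is that \(\widetilde m_n,\widetilde m\) indeed lie in \(\mathcal C_T\), which is already guaranteed by Proposition~\ref{prop:MT-invariance}.
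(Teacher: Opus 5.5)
Your proof is correct and follows the paper's argument: the coupling $(\Phi^{m_n}_{0,t},\Phi^m_{0,t})_\# m_0$, the function $g_n$ dominated via Proposition~\ref{prop:trajectory-growth}, and dominated convergence (your $B_R$/tail split is that argument done by hand), with Lemma~\ref{lem:closed-loop-stability} supplying $g_n\to0$. Your Gr\"onwall re-derivation of that lemma and your remark that $\mathfrak E_T m_n\to\mathfrak E_T m$ uniformly on $[0,+\infty)$ (needed to invoke (E3)) usefully make explicit what the paper leaves implicit; the only slip is a missing factor $T$ in your final Gr\"onwall bound, which is harmless.
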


\begin{proof}
Let \(m_n\longrightarrow m\) in \(\mathcal C_T\). By
Lemma~\ref{lem:closed-loop-stability}, the corresponding characteristic flows satisfy
\[
\sup_{t\in[0,T]} \left|\Phi^{m_n}_{0,t}(x)-\Phi^m_{0,t}(x)\right| \longrightarrow 0,
\]
locally uniformly with respect to the initial condition \(x\in\mathbb R_+^2\). Using the above expression as \(g_n(x):=\sup_{t\in[0,T]} \left|\Phi^{m_n}_{0,t}(x)-\Phi^m_{0,t}(x)\right|\), we have \(g_n(x)\longrightarrow 0\), for every \(x\in\mathbb R_+^2\). By the linear growth estimate from Proposition~\ref{prop:trajectory-growth}, there exists a constant \(C_T>0\), independent of \(n\), such that \(g_n(x) \le C_T(1+|x|)\), for \(x\in\mathbb R_+^2\). Since \(m_0\in\mathcal P_1(\mathbb R_+^2)\), the function
\(x\mapsto C_T(1+|x|)\) is integrable with respect to \(m_0\). Hence, by the dominated convergence theorem, we observe
\[
\int_{\mathbb R_+^2} g_n(x)\,m_0(\mathrm dx) \longrightarrow 0.
\]
On the other hand, since \(\mathcal T(m_n)(t) = \Phi^{m_n}_{0,t}\#m_0\) and \(\mathcal T(m)(t) = \Phi^m_{0,t}\#m_0\), the coupling induced by the common initial distribution \(m_0\) gives, for every \(t\in[0,T]\),
\[
W_1\bigl(\mathcal T(m_n)(t),\mathcal T(m)(t)\bigr)
\le \int_{\mathbb R_+^2} \left| \Phi^{m_n}_{0,t}(x)-\Phi^m_{0,t}(x) \right| \,m_0(\mathrm dx).
\]
Taking the supremum over \(t\in[0,T]\), we obtain
\[
\begin{aligned}
d_T\bigl(\mathcal T(m_n),\mathcal T(m)\bigr)
= \sup_{t\in[0,T]} W_1\bigl(\mathcal T(m_n)(t),\mathcal T(m)(t)\bigr) 
&\le \int_{\mathbb R_+^2} \sup_{t\in[0,T]} \left|
\Phi^{m_n}_{0,t}(x)-\Phi^m_{0,t}(x) \right| \,m_0(\mathrm dx) \\
&= \int_{\mathbb R_+^2} g_n(x)\,m_0(\mathrm dx) \longrightarrow 0.
\end{aligned}
\]
Therefore, \(d_T\bigl(\mathcal T(m_n),\mathcal T(m)\bigr)\longrightarrow 0\), which proves the continuity of \(\mathcal T\).
\end{proof}

Although the relative compactness of \(\mathcal T(\mathcal M_T)\) follows immediately from the compactness of \(\mathcal M_T\) established in Proposition~\ref{prop:MT-convex-compact} and the invariance property \(\mathcal T(\mathcal M_T)\subseteq\mathcal M_T\) established in
Proposition~\ref{prop:MT-invariance}, the preceding uniform estimates stated in Lemmas~\ref{lem:uniform-first-moment}--\ref{lem:uniform-equicontinuity} also provide a direct proof of this property. In particular, we record the following relative compactness result.


\begin{proposition}[Relative compactness of $\mathcal T(\mathcal M_T)$]
\label{prop:relative-compactness}
Assume that {\rm (H1)--(H4)} and {\rm (E1)--(E3)} hold. Then \(\mathcal T(\mathcal M_T)\subset\mathcal C_T\)
is relatively compact with respect to the metric $d_T$.
\end{proposition}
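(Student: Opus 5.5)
The proposal is to prove the statement directly from the uniform estimates of Lemmas~\ref{lem:uniform-first-moment}--\ref{lem:uniform-equicontinuity}, and to record the short indirect route as a check. For the indirect route: by Proposition~\ref{prop:MT-invariance}, \(\mathcal T(\mathcal M_T)\subseteq\mathcal M_T\), and by Proposition~\ref{prop:MT-convex-compact}, \(\mathcal M_T\) is compact in \((\mathcal C_T,d_T)\). A subset of a compact set in a metric space has compact closure, so the claim follows at once. Since the text announces a direct proof, I will present the argument through the metric-valued Arzel\`a--Ascoli theorem applied to the family \(\mathcal T(\mathcal M_T)\) itself.

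\textbf{Step 1: pointwise relative compactness.} Fix \(t\in[0,T]\) and consider \(\mathcal G_t=\{\mathcal T(m)(t)\suchthat m\in\mathcal M_T\}\subset\mathcal P_1(\mathbb R_+^2)\). Lemma~\ref{lem:uniform-first-moment} gives a uniform first-moment bound \(C_T\). By Markov's inequality, \(\sup_{\mu\in\mathcal G_t}\mu(\mathbb R_+^2\setminus B_R)\le C_T/R\), so \(\mathcal G_t\) is tight, and Prokhorov's theorem gives weak relative compactness. Lemma~\ref{lem:uniform-tail} supplies uniform integrability of \(|x|\), namely \(\sup_{\mu\in\mathcal G_t}\int_{\mathbb R_+^2\setminus B_R}|x|\,\diff\mu\le\omega_T^{\mathrm{tail}}(R)\to0\). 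Weak convergence together with uniform integrability of first moments is equivalent to \(W_1\)-convergence, so every sequence in \(\mathcal G_t\) has a \(W_1\)-convergent subsequence. Its limit stays in \(\mathcal P_1(\mathbb R_+^2)\), because \(\mathbb R_+^2\) is closed and the first moment is lower semicontinuous. Hence \(\mathcal G_t\) is relatively compact in \((\mathcal P_1(\mathbb R_+^2),W_1)\).

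\textbf{Step 2: equicontinuity.} Lemma~\ref{lem:uniform-equicontinuity} gives \(W_1(\mathcal T(m)(t),\mathcal T(m)(s))\le\omega_T^{\mathrm{time}}(|t-s|)\) uniformly in \(m\). Its proof in fact produces a Lipschitz modulus, so the family is uniformly equicontinuous on the compact interval \([0,T]\).

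\textbf{Step 3: conclusion.} I will apply the Arzel\`a--Ascoli theorem for continuous maps from the compact space \([0,T]\) into the complete metric space \((\mathcal P_1(\mathbb R_+^2),W_1)\). Pointwise relative compactness together with equicontinuity yields relative compactness of \(\mathcal T(\mathcal M_T)\) in \((\mathcal C_T,d_T)\).

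The only delicate point is Step 1, the passage from tightness to \(W_1\)-relative compactness. It rests on the standard characterization of \(W_1\)-convergence as weak convergence plus convergence of first moments, for which uniform integrability is sufficient; I would cite \cite{AmbrosioGigliSavare2008} for this. Everything else is a direct assembly of the earlier lemmas. Note also that none of these lemmas uses (E3), even though the statement assumes it.
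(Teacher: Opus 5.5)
Your proof is correct and follows essentially the same route as the paper. Both arguments get pointwise relative compactness in $(\mathcal P_1(\mathbb R_+^2),W_1)$ from the uniform first-moment and tail lemmas, take a common temporal modulus from Lemma~\ref{lem:uniform-equicontinuity}, and conclude with the metric-valued Arzel\`a--Ascoli theorem; the paper also notes the indirect route via $\mathcal T(\mathcal M_T)\subseteq\mathcal M_T$ just before the proposition. Your version spells out the tightness, Prokhorov and uniform-integrability step that the paper leaves implicit, and your remark that (E3) is not used is accurate.
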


\begin{proof}
By Lemma~\ref{lem:uniform-first-moment}, the family
\(\left\{(\mathcal T(m))(t)\suchthat m\in\mathcal M_T,\ t\in[0,T] \right\}\) has uniformly bounded first moments. By Lemma~\ref{lem:uniform-tail}, its first moments are uniformly integrable. Hence, for every fixed $t\in[0,T]$, the set \(\left\{(\mathcal T(m))(t)\suchthat m\in\mathcal M_T \right\}\) is relatively compact in $(\mathcal P_1(\mathbb R_+^2),W_1)$. Moreover, Lemma~\ref{lem:uniform-equicontinuity} provides a common modulus of continuity in time. Since $(\mathcal P_1(\mathbb R_+^2),W_1)$ is a complete separable metric space, the Arzel\`a--Ascoli theorem for metric-valued functions implies that a family of curves which is pointwise relatively compact and uniformly equicontinuous is relatively compact in \(C([0,T];\mathcal P_1(\mathbb R_+^2))\). Therefore, $\mathcal T(\mathcal M_T)$ is relatively compact in
$(\mathcal C_T,d_T)$.
\end{proof}

The preceding results provide all the ingredients required for the application of Schauder's fixed-point theorem. The set \(\mathcal M_T\) is nonempty, convex, and compact by
Proposition~\ref{prop:MT-convex-compact}, the operator
\(\mathcal T\) is continuous on \(\mathcal M_T\) by
Proposition~\ref{prop:T-continuity}, and
Proposition~\ref{prop:MT-invariance} shows that
\(\mathcal T(\mathcal M_T)\subseteq\mathcal M_T\).
We can therefore apply Schauder's fixed-point theorem to
\(\mathcal T\colon\mathcal M_T\longrightarrow\mathcal M_T\).

\begin{theorem}[Existence of a finite-horizon MFG equilibrium]
\label{thm:existence-mfg}
Assume that {\rm(H1)--(H4)} and {\rm(E1)--(E3)} hold. Let \(T>0\) and let \(m_0\in\mathcal P_1(\mathbb R_+^2)\) be the initial population distribution.
Assume furthermore that the post-qualification feedback
\(\bar u\colon [0,T]\times\Gamma\longrightarrow[0,1]\) is measurable and that the associated closed-loop dynamics are well posed. Then there exists a population flow \(m^\star\in\mathcal C_T\) such that \(\mathcal T(m^\star)=m^\star\). Consequently, \(m^\star\) is an MFG equilibrium on \([0,T]\) in the sense of Definition~\ref{def: MFG equilibrium}. Moreover, if \(V^{m^\star}\) denotes the value function associated with \(m^\star\) and \(u^{*,m^\star}\) the corresponding selected optimal feedback, then \(m^\star\) is generated by the associated optimal closed-loop dynamics on \([0,T]\).
\end{theorem}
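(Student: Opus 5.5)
The plan is to apply Schauder's fixed-point theorem (\cite[Theorem~11.2]{Zeidler1986}) to the restriction \(\mathcal T\colon\mathcal M_T\longrightarrow\mathcal M_T\). The set \(\mathcal M_T\) is built from the constants and moduli produced by Lemmas~\ref{lem:uniform-first-moment}--\ref{lem:uniform-equicontinuity}: \(C_T\), \(\omega_T^{\mathrm{tail}}\), and \(\omega_T^{\mathrm{time}}\). We regard \(\mathcal C_T\) as a subset of a locally convex space, namely \(C([0,T];\mathcal M(\mathbb R_+^2))\), where the space of finite signed measures with finite first moment is equipped with the Kantorovich--Rubinstein norm. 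On \(\mathcal P_1\) this norm induces exactly \(W_1\), so convex combinations in \(\mathcal M_T\) are the linear ones used in Proposition~\ref{prop:MT-convex-compact}. Schauder's theorem then applies to compact convex subsets of such a space.

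\textbf{Step 1: \(\mathcal M_T\) is nonempty.} This is the only ingredient not stated explicitly earlier. Pick any \(m\in\mathcal C_T\); the constant flow \(m(t)\equiv m_0\) will do, since it is continuous in \(W_1\) because \(m_0\in\mathcal P_1\). By Proposition~\ref{prop:closed-loop-wellposed}, \(\mathcal T(m)\) is well defined, and Proposition~\ref{prop:MT-invariance} gives \(\mathcal T(m)\in\mathcal M_T\). Here we use the hypothesis of the theorem that the closed-loop dynamics with the chosen \(\bar u\) are well posed, which supplies a measurable \(\widehat u^{\,m}\) and, via (E2), the flow \(\Phi^m_{0,t}\). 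Before this, I would check that \(\mathcal T(m)\) is indeed continuous into \(\mathcal P_1\). This follows from the push-forward representation together with Lemma~\ref{lem:uniform-equicontinuity} and Lemma~\ref{lem:uniform-first-moment}.

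\textbf{Step 2: the remaining Schauder hypotheses.} Proposition~\ref{prop:MT-convex-compact} gives that \(\mathcal M_T\) is convex and compact in \((\mathcal C_T,d_T)\). Proposition~\ref{prop:MT-invariance} gives \(\mathcal T(\mathcal M_T)\subseteq\mathcal M_T\). Proposition~\ref{prop:T-continuity}, which relies on (E3) through Lemma~\ref{lem:closed-loop-stability}, gives continuity of \(\mathcal T\). One subtlety must be addressed here. In (E3) and Lemma~\ref{lem:closed-loop-stability}, the individual problem is posed with the extended flow \(\mathfrak E_T m\). So I would verify that \(d_T(m_n,m)\to0\) implies that \(\mathfrak E_T m_n\to\mathfrak E_T m\) uniformly on \([0,+\infty)\). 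This holds because the extensions are constant after \(T\), which yields
\[
\sup_{t\ge0}W_1\bigl((\mathfrak E_Tm_n)(t),(\mathfrak E_Tm)(t)\bigr)=d_T(m_n,m).
\]
Hence (E3) applies to the extended flows. Schauder's theorem then yields \(m^\star\in\mathcal M_T\) with \(\mathcal T(m^\star)=m^\star\).

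\textbf{Step 3: interpreting the fixed point.} By Definition~\ref{def: population operator}, the fixed-point identity reads \(m^\star(t)=\Phi^{m^\star}_{0,t}\#m_0\) on \([0,T]\), which is Definition~\ref{def: MFG equilibrium}. Proposition~\ref{prop:weak-continuity} shows that \(m^\star\) is the unique weak solution of the continuity equation driven by \(\widehat F^{m^\star}\). Off \(\Gamma\), this field is built from the selected optimal feedback \(u^{*,m^\star}\) associated with \(V^{m^\star}\) through Theorem~\ref{thm:feedback}. Hence \(m^\star\) is generated by the optimal closed-loop dynamics.

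\textbf{Main obstacle.} I expect the main obstacle to be the functional-analytic framing of Schauder's theorem rather than any estimate. \(\mathcal C_T\) is only a metric space, so convexity must be interpreted inside the ambient normed space given by the Kantorovich--Rubinstein norm. In addition, the topology of \(d_T\) must agree with the norm topology on \(\mathcal M_T\), or at least compactness must transfer. An alternative I would try first, if that identification is awkward, is the version of Schauder's theorem for compact convex subsets of metrizable locally convex spaces. Another option is to note that any continuous self-map of a compact convex set that is homeomorphic to a convex subset of a normed space has a fixed point.
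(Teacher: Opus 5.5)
Your proposal is correct and follows the paper's proof: Schauder's theorem applied to $\mathcal T\colon\mathcal M_T\to\mathcal M_T$, using Propositions~\ref{prop:MT-convex-compact}, \ref{prop:MT-invariance} and~\ref{prop:T-continuity}. The differences are refinements. You obtain nonemptiness from $\mathcal T(\mathcal C_T)\subseteq\mathcal M_T$, whereas the paper places the constant flow $m\equiv m_0$ in $\mathcal M_T$ after enlarging $C_T$ and $\omega_T^{\mathrm{tail}}$, so your version avoids retuning the constants; you also make explicit the Kantorovich--Rubinstein embedding and the identity $\sup_{t\ge0}W_1\bigl((\mathfrak E_Tm_n)(t),(\mathfrak E_Tm)(t)\bigr)=d_T(m_n,m)$, both of which the paper leaves implicit.
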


\begin{proof}
We verify the hypotheses of Schauder's fixed-point theorem.

\paragraph*{Nonemptiness, convexity, and compactness.}
The set $\mathcal M_T$ is nonempty. Indeed, the constant population flow $m(t)\equiv m_0$ belongs to $\mathcal M_T$ provided that $C_T$ is chosen larger than $\int_{\mathbb R_+^2}|x|\,m_0(\mathrm dx)$ and that $\omega_T^{\mathrm{tail}}$ is chosen so as to dominate the corresponding tail integral of $m_0$. The temporal continuity condition is automatically satisfied since $W_1(m(t),m(s))=0$ for all $s,t\in[0,T]$. Convexity and
compactness then follow from Proposition~\ref{prop:MT-convex-compact}.


\paragraph*{Invariance of $\mathcal M_T$.}
By Proposition~\ref{prop:MT-invariance}, we have \(\mathcal T(\mathcal C_T)\subseteq\mathcal M_T\), and hence, in particular, we deduce that \(\mathcal T(\mathcal M_T)\subseteq\mathcal M_T\).

\paragraph*{Continuity of the fixed-point operator.}
The continuity of \(\mathcal T\) is already established in Proposition~\ref{prop:T-continuity}. Hence, its restriction \(\mathcal T\colon\mathcal M_T\longrightarrow\mathcal M_T\) is continuous with respect to the metric \(d_T\).

\paragraph*{Application of Schauder's theorem.}
All the hypotheses of Schauder's fixed-point theorem are now satisfied. More precisely, \(\mathcal M_T\) is nonempty, convex, and compact, \(\mathcal T\) is continuous, and \(\mathcal T(\mathcal M_T)\subseteq\mathcal M_T\). Therefore, there exists
\(m^\star\in\mathcal M_T\) such that \(\mathcal T(m^\star)=m^\star\). By Definition~\ref{def: MFG equilibrium}, \(m^\star\) is therefore an
MFG equilibrium on \([0,T]\). The corresponding value function \(V^{m^\star}\) and selected optimal feedback \(u^{*,m^\star}\) provide the individual optimality conditions associated with the extended population flow \(\mathfrak E_T m^\star\), while \(m^\star\) is generated by the corresponding optimal closed-loop dynamics on \([0,T]\).
\end{proof}

The previous theorem establishes the existence of a self-consistent population trajectory on every finite time interval \([0,T]\), while the individual minimum-time problem remains formulated on the infinite time horizon. For a given population flow \(m\in\mathcal C_T\), the associated infinite-horizon individual problem is defined using the extension \(\mathfrak E_Tm\in\mathcal C_\infty\) with \((\mathfrak E_Tm)(t)=m(t)\), for \(t\in[0,T]\), as specified earlier. Thus, \(T\) should not be interpreted as a terminal time for the individual optimization problem, but only as the horizon of the population fixed-point construction. The equilibrium population flow is obtained as a fixed point of the nonlinear operator
\begin{equation}\label{fixed-point sequence}
m \longmapsto V^m \longmapsto u^{*,m}
\longmapsto \widehat F^m \longmapsto \mathcal T(m),
\end{equation}
which combines the individual optimization problem with the aggregate population dynamics. The finite-horizon formulation is particularly suitable for the fixed-point argument because it provides uniform moment, tail, and temporal continuity estimates for the population
flows, yielding the compactness required by Schauder's theorem. 
Since the individual optimization problem remains formulated on the infinite time horizon, it is natural to ask whether the finite-time population equilibria can be extended to a global-in-time equilibrium. Such a result, however, does not follow directly from the finite-horizon fixed-point construction. 

Let \(T_n\longrightarrow +\infty\), and for each \(T_n\) let \(m^{T_n}\in\mathcal C_{T_n}\) be a finite-horizon equilibrium. A natural strategy is to extract a subsequence which converges on every compact time interval. The key issue is the availability of estimates which are uniform with respect to the horizon. More precisely, suppose that for every \(S>0\), there exist constants \(C_S>0\) and a modulus \(\omega_S\), independent of \(T\geq S\), such that
\[
\sup_{t\in[0,S]} \int_{\mathbb R_+^2}|x|\,m^T(t,\diff x)
\leq C_S,  \,\, \text{ and }  \,\, W_1\bigl(m^T(t),m^T(s)\bigr)
\leq \omega_S(|t-s|), \,\, \text{ for } s,t\in[0,S].
\]
If, in addition, a uniform tail estimate holds on every compact time interval, then the same compactness argument as above implies that the family \(\{m^T\suchthat T\geq S\}\) is relatively compact in
\(C\bigl([0,S];\mathcal P_1(\mathbb R_+^2)\bigr)\).
Applying this argument successively on the intervals \([0,S]\) and using a diagonal extraction procedure, one may obtain a subsequence, still denoted by \(m^{T_n}\), and a limit measurable distribution \(m\in C_{\mathrm{loc}}\bigl([0,+\infty);
\mathcal P_1(\mathbb R_+^2)\bigr)\) such that, for every finite \(S>0\), \(m^{T_n}\longrightarrow m\) in \(C\bigl([0,S];\mathcal P_1(\mathbb R_+^2)\bigr)\).
The convergence of the population flows alone, however, is not sufficient to pass to the limit in the MFG system. One must also establish suitable stability properties for the individual value functions and the selected optimal feedbacks. In the present framework, the stability of the selected feedback is imposed through assumption {\rm(E3)} for the finite-horizon fixed-point construction. A global limit \(T\longrightarrow +\infty\) would require a corresponding stability property which is uniform with respect to the horizon, together with suitable compactness or stability estimates for the associated value functions.

A further difficulty concerns the passage to the limit in the continuity equation. The velocity field depends on both the population flow and the selected optimal feedback. Consequently, convergence of the population trajectories must be combined with sufficient stability of the individual optimization problem and of the resulting closed-loop dynamics in order to identify the limiting population flow as a solution of the coupled MFG system.
Thus, the construction of a global-in-time MFG equilibrium would require additional uniform-in-horizon estimates, including uniform moment and tail bounds, local compactness of the population flows, and suitable stability properties for the value functions and selected
optimal feedbacks. Establishing these properties is beyond the scope of the present work. The finite-horizon result obtained above therefore constitutes the rigorous existence result of this paper. It establishes the existence of a self-consistent MFG equilibrium on every finite time interval \([0,T]\) and provides a compactness framework that can serve as a starting point for a future analysis of global-in-time equilibria or equivalently the analysis of MFG equilibria with free final time.

\section{Discussion and economic interpretation}
\label{sec: disscussion}

The MFG framework developed in this paper provides a dynamic model of human-capital accumulation in which individual educational decisions interact with the aggregate population distribution. The central feature of the model is that the productivity of learning depends not only on an individual's own state but also on the population environment. As a consequence, educational choices generate external effects that influence the future incentives of other agents. The state of an individual is described by the pair \(x(t)\), and the control variable is the learning effort \(u(t)\in[0,1]\), which determines the intensity of investment in human-capital accumulation. The controlled dynamics are given by~\eqref{eq:control-system}. The first equation describes the accumulation of human capital, in which the learning effort increases the stock of human capital, while the productivity of this effort depends on the learning technology \(K\), which is influenced by the aggregate population distribution. The second equation describes the evolution of financial resources. Income depends on the current level of human capital through the wage function \(w\), while educational effort generates a direct cost represented by the function \(c\).

The individual objective is to reach the qualification threshold \(\Gamma\) in the shortest possible time. The optimization problem is therefore a minimum-time problem with a state constraint \(b(t)\geq0\). The budget constraint plays a central role in the model, reflecting the fact that the educational effort is financially costly, and individuals cannot sustain learning policies that would drive their financial resources below zero. The model captures a fundamental economic trade-off between the speed of human-capital accumulation and the preservation of financial resources. A higher learning effort accelerates qualification, but it also increases the cost of education and may reduce the accumulation of financial resources. Conversely, a lower effort reduces the educational cost and may allow financial resources to accumulate more rapidly. The optimal policy must balance these two opposing forces over time.

A distinctive feature of the model is that learning productivity is endogenous at the aggregate level. The function \(K(m,z)\) depends on the population distribution \(m\), which means that the return to educational effort varies with the composition of the population. For example, a population with a larger concentration of highly educated individuals may generate knowledge spillovers, better access to information, or more productive learning interactions. Conversely, congestion effects or competition for educational resources may reduce the effectiveness of learning. The framework therefore accommodates both positive and negative aggregate interactions through the dependence of \(K\) on \(m\). This endogenous learning productivity is the mechanism that transforms the individual optimal control problem into an MFG. Individual educational decisions affect the evolution of the population distribution, and the resulting population distribution modifies the productivity of future educational decisions. The economy is therefore described by a dynamic feedback loop between individual human-capital accumulation and aggregate educational conditions.


The MFG equilibrium is fundamentally a fixed-point problem linking individual optimization and aggregate population dynamics. Unlike the individual optimal control problem studied in the previous sections, the population environment entering the individual optimization problem is not exogenous at equilibrium. The productivity of learning depends on the population distribution, while the population distribution itself is generated by the optimal learning decisions of the agents. The equilibrium is therefore characterized by a consistency condition between the population environment used by individuals when making their decisions and the population environment generated by their collective behavior.

More precisely, we have considered a candidate population flow \(m\in\mathcal C_T = C\bigl([0,T];\mathcal P_1(\mathbb R_+^2)\bigr)\). The horizon \(T\) refers to the time interval on which the population fixed-point construction is performed. The individual minimum-time problem, however, remains formulated on the infinite time horizon. Accordingly, the candidate population flow \(m\) is incorporated into the individual optimization problem through the extension \(\mathfrak E_Tm\in\mathcal C_\infty\), with \(\mathcal C_\infty = C\bigl([0,+\infty);\mathcal P_1(\mathbb R_+^2)\bigr)\),
which coincides with \(m\) on \([0,T]\) and is taken to be constant, equal to \(m(T)\), after time \(T\). Thus, for a fixed population flow and its extension, each agent solves the infinite-horizon minimum-time optimal control problem introduced in Section~\ref{sec: Human-capital model}. The solution of this problem is described by the value function, which represents the minimal remaining time required for an individual with human capital \(z\) and budget \(b\) at time \(t\) to reach the qualification threshold. The value function summarizes the future optimization possibilities and therefore provides a characterization of optimal individual behavior.

The value function determines the selected optimal feedback control \(u^{*,m}(t,x)\), which specifies the learning effort that minimizes the remaining qualification time in the population environment generated by \(\mathfrak E_Tm\). This feedback induces a velocity field in the state space, denoted by \(\widehat F^m(t,x)\), and therefore determines the evolution of individual trajectories. The aggregate dynamics are described by the continuity equation, which transports the initial population distribution under this optimal velocity field. Consequently, the candidate population flow \(m\) generates a new population trajectory, denoted by \(\mathcal T(m)\). The equilibrium condition is precisely the fixed-point identity \(m=\mathcal T(m)\). This mechanism may be summarized by the sequence~\eqref{fixed-point sequence}, established in Section~\ref{sec:MFG-system}. The first arrow represents the individual optimization problem in a given population environment. The second arrow determines the optimal learning policy. The third arrow constructs the induced aggregate velocity field, and the final arrow transports the population distribution according to this velocity field. The role of the value function is therefore substantially richer than in a standard optimal control problem. In addition to characterizing optimal individual behavior, it determines the aggregate evolution of the population through the induced feedback. The continuity equation plays the complementary role of translating the microscopic decisions of a continuum of agents into the macroscopic evolution of the population distribution.

This distinguishes the present model from a collection of independent optimal control problems. If the learning technology were independent of the population distribution, each agent would solve an optimization problem unaffected by the behavior of the rest of the population. The aggregate distribution would then simply result from the aggregation of individual decisions, without feeding back into those decisions. In the present framework, however, the population distribution enters the learning technology through the term \(K(m(t),z)\). Consequently, the productivity of learning depends on the aggregate state of the population, and individual incentives are affected by the distribution of human capital and financial resources across agents. The equilibrium is thus a self-consistent dynamic configuration in which individual optimization and aggregate evolution interact. Agents optimize in a given population environment, while that population environment evolves as a consequence of their optimal decisions. The fixed-point condition ensures that the population environment used in the individual optimization problem coincides with the population environment generated by the resulting collective behavior. In this sense, the MFG equilibrium provides a mathematical representation of a decentralized economy in which human-capital accumulation is simultaneously an individual decision and a collective phenomenon driven by endogenous interactions across the population.


The MFG framework developed in this paper provides a natural setting for analyzing policies that affect the process of human-capital accumulation. The comparative statics results of Section~\ref{sec: Human-capital model} show that improvements in the learning technology, increases in income, and reductions in the cost of educational effort weakly decrease the minimal time required to reach the qualification threshold. In the MFG, however, these interventions have an additional equilibrium effect, as they modify the population distribution, which in turn changes the learning environment faced by other agents. Consequently, the aggregate impact of a policy cannot be inferred solely from its effect on an individual agent.

A first class of policies concerns improvements in the learning technology \(K\). Such policies may represent investments in educational infrastructure, improvements in teaching quality, digital learning technologies, access to information, or institutional reforms that increase the efficiency of skill acquisition. At the individual level, a larger value of \(K(m,z)\) increases the rate of human-capital accumulation generated by a given learning effort. As established by the comparative statics results, this weakly decreases the time required to attain the qualification threshold. In equilibrium, however, the effect may be amplified. Faster human-capital accumulation modifies the population distribution, potentially increasing the mass of qualified or highly skilled individuals. If the learning technology exhibits positive dependence on the population distribution, this change may further increase learning productivity for other agents. The initial policy intervention may therefore generate a dynamic amplification mechanism through endogenous aggregate spillovers. Conversely, if learning is subject to congestion effects, the aggregate response may be weaker than the individual response.

A second class of policies consists of increasing the income generated by human capital. Such interventions may represent wage subsidies, educational stipends, income support during training, or labor-market policies that increase the returns to acquired skills. In the model, a higher income function \(w(z)\) increases the accumulation of financial resources and can enlarge the set of feasible learning strategies. Individuals are less likely to interrupt or reduce their educational effort because of financial limitations. The equilibrium consequences are again broader than the individual effects. Higher income may enable a larger fraction of the population to engage in intensive learning, thereby accelerating aggregate human-capital accumulation. The resulting change in the population distribution feeds back into the learning technology through \(K(m,z)\). Consequently, income-support policies may generate indirect productivity effects that are absent from a partial-equilibrium analysis.

A third class of interventions reduces the cost of learning effort \(c(u)\). This may correspond to tuition reductions, subsidized training programs, public financing of education, or policies that lower the opportunity cost of acquiring skills. A reduction in \(c(u)\) makes intensive learning more attractive and enlarges the set of feasible trajectories for financially constrained individuals. The value function consequently decreases because agents can sustain higher learning effort without exhausting their available budget.

The interaction between these three policy channels is particularly important. Improvements in learning technology, increases in income, and reductions in educational costs all accelerate human-capital accumulation, but they operate through different mechanisms. Technological improvements affect the productivity of effort, income policies affect the accumulation of financial resources, and cost reductions affect the budgetary burden of education. In equilibrium, these mechanisms interact through the endogenous evolution of the population distribution.

The dependence of \(K\) on the population distribution is therefore central for policy evaluation. If the learning technology exhibits positive spillovers from aggregate human capital, educational policies may generate aggregate benefits that exceed the direct benefits accruing to individual agents. Individual learning decisions generate benefits for other agents by improving the learning environment itself. In this case, decentralized individual incentives may therefore differ from the socially efficient allocation, potentially providing a rationale for public intervention. On the other hand, if learning is subject to congestion effects, competition for educational resources, or limited training capacity, the aggregate benefits of educational expansion may be smaller than the sum of individual benefits. Policies that appear highly effective in partial equilibrium may therefore produce more moderate aggregate improvements once the endogenous adjustment of the population distribution is taken into account.

This distinction between partial-equilibrium and equilibrium effects is one of the main contributions of the MFG approach. In a standard optimal control framework, the population environment is fixed, and policy evaluation concerns only the response of an individual agent. In the present framework, the population distribution is endogenous. A policy affects individual incentives, which modify optimal learning trajectories, which alter the population distribution, which then changes the learning technology faced by other agents. The equilibrium response is therefore the outcome of a dynamic feedback loop rather than a simple aggregation of individual responses. From a policy perspective, this implies that interventions aimed at accelerating qualification and human-capital accumulation should be evaluated not only by their direct effects on individual behavior but also by their indirect effects on the aggregate learning environment. The model suggests that educational and labor-market policies may generate important dynamic spillovers through the endogenous evolution of the population distribution. These spillovers may substantially amplify or attenuate the effectiveness of public interventions, depending on the nature of the interaction encoded in the learning technology \(K\). The MFG framework therefore provides a natural tool for distinguishing private incentives from aggregate outcomes and for analyzing the consequences of policies that affect human-capital accumulation in an interacting population.


The analysis developed in this paper provides a mathematical framework for studying human-capital accumulation under financial constraints and aggregate learning interactions. The main theoretical contribution is the formulation of the individual optimization problem as a time-dependent minimum-time control problem and the establishment of the existence of an MFG equilibrium on arbitrary finite time horizons. Here, \(T\) is the horizon of the population fixed-point construction, whereas the individual minimum-time problem remains formulated on the infinite time horizon through the extension \(\mathfrak E_Tm\).

Several important questions remain open, however, and suggest natural directions for future research. The principal limitation of the present analysis concerns the passage from finite population horizons to a global-in-time MFG equilibrium. The existence theorem established in Section~\ref{sec:MFG-system} is formulated on an arbitrary finite interval \([0,T]\), where compactness arguments and uniform estimates on the population dynamics can be exploited. This finite interval does not constitute a terminal time for the individual optimization problem, which remains infinite-horizon through the extension \(\mathfrak E_Tm\). Extending the population equilibrium construction to the infinite horizon would require substantially stronger a priori estimates and a compactness argument as \(T\longrightarrow+\infty\). In particular, one would need estimates on the population distributions that are uniform with respect to the population horizon on every fixed compact time interval, together with appropriate stability properties for the value functions and the associated optimal feedbacks on increasingly long horizons. The discussion at the end of Section~\ref{sec:MFG-system} outlines a possible compactness approach based on a sequence of finite-horizon equilibria, but a complete global-in-time existence theory remains an open problem.

A second issue concerns the treatment of agents after qualification. The conservative formulation adopted in this work retains qualified agents in the population and specifies their subsequent behavior through an exogenous post-qualification feedback. An alternative formulation would treat the qualification set as an absorbing boundary and remove agents from the active learning population once they reach the target. This would lead to a transport equation with absorption and a decreasing total mass outside the qualification set. The two formulations correspond to different economic interpretations and may lead to different equilibrium dynamics. 

The model could also be extended in several directions. A natural extension consists of introducing stochastic shocks in the human-capital dynamics or in the income process. Random learning outcomes, uncertain labor-market conditions, or idiosyncratic income fluctuations would transform the individual optimization problem into a stochastic control problem and the Hamilton--Jacobi equation into a second-order Hamilton--Jacobi--Bellman equation. The corresponding MFG would involve a Fokker--Planck equation instead of a purely deterministic continuity equation. Another important extension concerns the determination of wages. In the present model, the income function \(w(z)\) is prescribed and does not depend on the population distribution. A richer economic framework would allow wages to depend on the population distribution, reflecting labor-market equilibrium, skill premia, or endogenous demand for qualified workers. In that case, aggregate human-capital accumulation would affect not only the productivity of learning through \(K(m,z)\), but also the returns to education through an endogenous wage function. This would create an additional equilibrium feedback channel between educational decisions and labor-market outcomes.

The interaction kernel itself may also be generalized. The function \(K(m,z)\) has been treated abstractly, but one may consider local interactions, network effects, peer-group influences, or spatially structured populations. Such extensions could generate multiple equilibria, path dependence, or persistent inequality in human-capital accumulation, depending on the strength and nature of the aggregate interactions. Finally, the framework is well suited for quantitative applications. The model could be calibrated using data on educational attainment, training costs, income dynamics, and transitions between skill levels. The value function provides a measure of the time required to attain qualification, while the equilibrium population dynamics describe the evolution of the distribution of human capital and financial resources. This makes the model potentially useful for evaluating educational subsidies, income-support policies, training programs, and other interventions aimed at accelerating human-capital accumulation.

These extensions would considerably enrich the economic content of the model while preserving the central mathematical structure developed in this paper. The combination of optimal control, Hamilton--Jacobi theory, transport equations, and fixed-point methods provides a flexible framework for analyzing the interaction between individual educational decisions and aggregate population dynamics. The present work therefore constitutes a first step toward a broader class of MFG models of human-capital accumulation with endogenous learning environments, financial constraints, and heterogeneous populations.

\bibliographystyle{abbrv}
\bibliography{bib}

@book{Becker1964Human,
  author = {Becker, Gary S.},
  title = {Human Capital: A Theoretical and Empirical Analysis, with Special Reference to Education},
  publisher = {The University of Chicago Press},
  address = {Chicago},
  year = {1964},
  edition = {3}
}

@article{BenPorath1967Production,
  author = {Ben-Porath, Yoram},
  title = {The Production of Human Capital and the Life Cycle of Earnings},
  journal = {Journal of Political Economy},
  volume = {75},
  number = {4},
  pages = {352--365},
  year = {1967},
  doi = {10.1086/259291}
}

@article{Lucas1988Mechanics,
  author = {Lucas, Robert E.},
  title = {On the Mechanics of Economic Development},
  journal = {Journal of Monetary Economics},
  volume = {22},
  number = {1},
  pages = {3--42},
  year = {1988},
  doi = {10.1016/0304-3932(88)90168-7}
}

@article{Romer1990Endogenous,
  author = {Romer, Paul M.},
  title = {Endogenous Technological Change},
  journal = {Journal of Political Economy},
  volume = {98},
  number = {5},
  pages = {S71--S102},
  year = {1990},
  doi = {10.1086/261725}
}

@article{BenhabibSpiegel1994Role,
  author = {Benhabib, Jess and Spiegel, Mark M.},
  title = {The Role of Human Capital in Economic Development: Evidence from Aggregate Cross-Country Data},
  journal = {Journal of Monetary Economics},
  volume = {34},
  number = {2},
  pages = {143--173},
  year = {1994},
  doi = {10.1016/0304-3932(94)90047-7}
}

@article{Moretti2004Workers,
  author = {Moretti, Enrico},
  title = {Workers' Education, Spillovers, and Productivity: Evidence from Plant-Level Production Functions},
  journal = {American Economic Review},
  volume = {94},
  number = {3},
  pages = {656--690},
  year = {2004},
  doi = {10.1257/0002828041464623}
}

@techreport{WinstonZimmerman2003Peer,
  author      = {Winston, Gordon C. and Zimmerman, David J.},
  title       = {Peer Effects in Higher Education},
  institution = {National Bureau of Economic Research},
  type        = {NBER Working Paper},
  number      = {9501},
  year        = {2003},
  month       = feb,
  doi         = {10.3386/w9501},
  url         = {https://doi.org/10.3386/w9501}
}

@article{Lyle2009Effects,
  author = {Lyle, David S.},
  title = {The Effects of Peer Heterogeneity on the Production of Human Capital at West Point},
  journal = {American Economic Journal: Applied Economics},
  volume = {1},
  number = {1},
  pages = {69--84},
  year = {2009},
  doi = {10.1257/app.1.1.69}
}

@article{HidalgoHidalgo2011Optimal,
  author = {Hidalgo-Hidalgo, Manuel},
  title = {The Optimal Investment in Education in the Presence of Peer Effects},
  journal = {Journal of Economic Dynamics and Control},
  volume = {35},
  number = {9},
  pages = {1585--1599},
  year = {2011}
}

@article{FeldZolitz2017Understanding,
  author = {Feld, Jan and Z{\"o}litz, Ulf},
  title = {Understanding Peer Effects: On the Nature, Identification, and Potential of Peer Effects},
  journal = {Journal of Labor Economics},
  volume = {35},
  number = {2},
  pages = {387--428},
  year = {2017},
  doi = {10.1086/689472}
}

@article{WuZhangWang2023,
  title={Student Performance, Peer Effects, and Friend Networks: Evidence from a Randomized Peer Intervention},
  author={Wu, Jia and Zhang, Junsen and Wang, Chunchao},
  journal={American Economic Journal: Economic Policy},
  volume={15},
  number={1},
  pages={510--542},
  year={2023},
  doi={10.1257/pol.20200563}
}

@article{LasryLionen,
  author = {Lasry, Jean-Michel and Lions, Pierre-Louis},
  title = {Jeux {\`a} champ moyen. I -- Le cas stationnaire},
  journal = {Comptes Rendus Math{\'e}matique},
  volume = {343},
  number = {9},
  pages = {619--625},
  year = {2006},
  doi = {10.1016/j.crma.2006.09.019}
}

@article{LasryLionsfr1,
  author = {Lasry, Jean-Michel and Lions, Pierre-Louis},
  title = {Jeux {\`a} champ moyen. II -- Horizon fini et contr{\^o}le optimal},
  journal = {Comptes Rendus Math{\'e}matique},
  volume = {343},
  number = {10},
  pages = {679--684},
  year = {2006},
  doi = {10.1016/j.crma.2006.09.018}
}

@article{LasryLionsfr2,
  author = {Lasry, Jean-Michel and Lions, Pierre-Louis},
  title = {Jeux {\`a} champ moyen. III -- Identification de la solution d'un probl{\`e}me de Nash},
  journal = {Comptes Rendus Math{\'e}matique},
  volume = {343},
  number = {8},
  pages = {477--482},
  year = {2006},
  doi = {10.1016/j.crma.2006.08.004}
}

@article{HuangMalhame,
  author  = {Huang, Minyi and Malhamé, Roland P. and Caines, Peter E.},
  title   = {Large Population Stochastic Dynamic Games: Closed-Loop McKean--Vlasov Systems and the Nash Certainty Equivalence Principle},
  journal = {IEEE Transactions on Automatic Control},
  volume  = {52},
  number  = {6},
  pages   = {1026--1041},
  year    = {2007},
  doi     = {10.1109/TAC.2007.899849}
}

@article{2HuangMalhame,
  author = {Huang, Minyi and Malham{\'e}, Roland P. and Caines, Peter E.},
  title = {Nash Equilibria for Large-Population Linear-Quadratic-Gaussian Games with Weakly Coupled Agents},
  journal = {Proceedings of the 17th International Symposium on Mathematical Theory of Networks and Systems},
  year = {2006}
}

@article{Huang2003Individual,
  author = {Huang, Minyi and Caines, Peter E. and Malham{\'e}, Roland P.},
  title = {Individual and Mass Behaviour in Large Population Stochastic Wireless Power Control Problems: Centralized and Nash Equilibrium Solutions},
  journal = {Proceedings of the 42nd IEEE Conference on Decision and Control},
  pages = {Individual},
  year = {2003}
}

@article{Aumann1964Markets,
  author = {Aumann, Robert J.},
  title = {Markets with a Continuum of Traders},
  journal = {Econometrica},
  volume = {32},
  number = {1/2},
  pages = {39--50},
  year = {1964},
  doi = {10.2307/1913732}
}

@article{Aumann1974Values,
  author = {Aumann, Robert J.},
  title = {Subjectivity and Correlation in Randomized Strategies},
  journal = {Journal of Mathematical Economics},
  volume = {1},
  number = {1},
  pages = {67--96},
  year = {1974}
}

@article{Jovanovic1988Anonymous,
  author = {Jovanovic, Boyan and Rosenthal, Robert W.},
  title = {Anonymous Sequential Games},
  journal = {Journal of Mathematical Economics},
  volume = {17},
  number = {1},
  pages = {77--87},
  year = {1988}
}

@article{Achdou2020Mean,
  author = {Achdou, Yves and Cardaliaguet, Pierre and Delarue, Fran{\c{c}}ois and Lasry, Jean-Michel and Lions, Pierre-Louis},
  title = {Mean Field Games: Numerical Methods},
  journal = {Oxford Research Encyclopedia of Economics and Finance},
  year = {2020}
}

@book{Carmona2018ProbabilisticI,
  author = {Carmona, Ren{\'e} and Delarue, Fran{\c{c}}ois},
  title = {Probabilistic Theory of Mean Field Games with Applications I: Mean Field FBSDEs, Control, and Games},
  publisher = {Springer},
  series = {Probability Theory and Stochastic Modelling},
  volume = {83},
  address = {Cham},
  year = {2018},
  doi = {10.1007/978-3-319-58920-6}
}

@book{Carmona2018ProbabilisticII,
  author = {Carmona, Ren{\'e} and Delarue, Fran{\c{c}}ois},
  title = {Probabilistic Theory of Mean Field Games with Applications II: Mean Field Games with Common Noise and Master Equations},
  publisher = {Springer},
  series = {Probability Theory and Stochastic Modelling},
  volume = {84},
  address = {Cham},
  year = {2018},
  doi = {10.1007/978-3-319-58920-7}
}

@misc{CardaliaguetNotes,
  author = {Cardaliaguet, Pierre},
  title = {Notes on Mean Field Games},
  note = {From P.-L. Lions' lectures at the Coll{\`e}ge de France},
  year = {2010}
}

@book{Cardaliaguet2019Master,
  author = {Cardaliaguet, Pierre and Delarue, Fran{\c{c}}ois and Lasry, Jean-Michel and Lions, Pierre-Louis},
  title = {The Master Equation and the Convergence Problem in Mean Field Games},
  publisher = {Princeton University Press},
  series = {Annals of Mathematics Studies},
  volume = {201},
  address = {Princeton},
  year = {2019}
}

@article{Gomes2016Regularity,
  author = {Gomes, Diogo A. and Pimentel, Edgard and Voskanyan, Vardan},
  title = {Regularity Theory for Mean Field Game Systems},
  journal = {Springer},
  year = {2016}
}

@misc{LasryLionsGrowth,
  author = {Lasry, Jean-Michel and Lions, Pierre-Louis and Guéant, Olivier},
  title  = {Application of Mean Field Games to Growth Theory},
  year   = {2008}
}

@article{AchdouBueraLasryLionsMoll,
  author = {Achdou, Yves and Buera, Francisco J. and Lasry, Jean-Michel and Lions, Pierre-Louis and Moll, Benjamin},
  title = {Partial Differential Equation Models in Macroeconomics},
  journal = {Philosophical Transactions of the Royal Society A},
  volume = {372},
  number = {2028},
  year = {2014},
  doi = {10.1098/rsta.2013.0580}
}

@article{CarmonaEconomicApplications,
  title={Applications of Mean Field Games in Financial Engineering and Economic Theory},
  author={Carmona, Ren{\'e}},
  journal = {arXiv preprint arXiv:2012.05237},
  year={2020},
  eprint={2012.05237},
  archivePrefix={arXiv},
  primaryClass={q-fin.EC}
}

@article{GhilliRicciZancoHumanCapital,
  title={A Mean Field Game Model for COVID-19 with Human Capital Accumulation},
  author={Ghilli, Daria and Ricci, Cristiano and Zanco, Giovanni},
  journal={Economic Theory},
  volume={77},
  number={1--2},
  pages={533--560},
  year={2024},
  doi={10.1007/s00199-023-01505-0}
}

@book{Aubin1991Viability,
  author = {Aubin, Jean-Pierre},
  title = {Viability Theory},
  publisher = {Birkh{\"a}user},
  address = {Boston},
  year = {1991},
  doi = {10.1007/978-0-8176-4910-4}
}

@book{AubinFrankowska1990SetValued,
  title={Set-Valued Analysis},
  author={Aubin, Jean-Pierre and Frankowska, Halina},
  publisher={Birkh{\"a}user},
  address={Boston},
  year={1990},
  doi={10.1007/978-0-8176-4848-0}
}

@book{BardiCapuzzo,
  author = {Bardi, Martino and Capuzzo-Dolcetta, Italo},
  title = {Optimal Control and Viscosity Solutions of Hamilton-Jacobi-Bellman Equations},
  publisher = {Birkh{\"a}user},
  address = {Boston},
  year = {1997},
  doi = {10.1007/978-0-8176-4755-1}
}

@article{Soner1986Optimal,
  author = {Soner, Halil Mete},
  title = {Optimal Control with State-Space Constraint},
  journal = {SIAM Journal on Control and Optimization},
  volume = {24},
  number = {3},
  pages = {552--561},
  year = {1986},
  doi = {10.1137/0324032}
}

@article{CannarsaFrankowska,
  author = {Cannarsa, Piermarco and Frankowska, H{\'e}l{\`e}ne},
  title = {Some Characterizations of Optimal Trajectories in Control Theory},
  journal = {SIAM Journal on Control and Optimization},
  volume = {29},
  number = {6},
  pages = {1322--1347},
  year = {1991},
  doi = {10.1137/0329068}
}

@article{Mazanti2019Minimal,
  author = {Mazanti, Guilherme and Santambrogio, Filippo},
  title = {Minimal-time mean field games},
  journal = {Math. Models Methods Appl. Sci.},
  fjournal = {Mathematical Models and Methods in Applied Sciences},
  volume = {29},
  number = {8},
  pages = {1413--1464},
  issn = {0218-2025},
  year = {2019},
  doi = {10.1142/S0218202519500258}
}

@article{Sadeghi_Arjmand_2022,
   title={Nonsmooth mean field games with state constraints},
   volume={28},
   ISSN={1262-3377},
   url={http://dx.doi.org/10.1051/cocv/2022069},
   DOI={10.1051/cocv/2022069},
   journal={ESAIM: Control, Optimisation and Calculus of Variations},
   publisher={EDP Sciences},
   author={Sadeghi Arjmand, Saeed and Mazanti, Guilherme},
   year={2022},
   pages={74} }

@article{Arjmand_2022,
  title     = {Multipopulation Minimal-Time Mean Field Games},
  author    = {Sadeghi Arjmand, Saeed and Mazanti, Guilherme},
  journal   = {SIAM Journal on Control and Optimization},
  volume    = {60},
  number    = {4},
  pages     = {1942--1969},
  year      = {2022},
  month     = jul,
  publisher = {Society for Industrial \& Applied Mathematics},
  doi       = {10.1137/21M1407306},
  url       = {https://doi.org/10.1137/21M1407306},
  issn      = {1095-7138}
}

@book{Zeidler1986,
  author    = {Zeidler, Eberhard},
  title     = {Nonlinear Functional Analysis and its Applications I: Fixed-Point Theorems},
  publisher = {Springer},
  address   = {New York},
  year      = {1986}
}

@book{CoddingtonLevinson1955,
  author    = {Earl A. Coddington and Norman Levinson},
  title     = {Theory of Ordinary Differential Equations},
  publisher = {McGraw--Hill},
  address   = {New York},
  year      = {1955}
}

@book{AmbrosioGigliSavare2008,
  author    = {Ambrosio, Luigi and Gigli, Nicola and Savar{\'e}, Giuseppe},
  title     = {Gradient Flows: In Metric Spaces and in the Space of Probability Measures},
  series    = {Lectures in Mathematics ETH Z{\"u}rich},
  edition   = {2},
  publisher = {Birkh{\"a}user},
  address   = {Basel},
  year      = {2008},
  doi       = {10.1007/978-3-7643-8722-8}
}

@book{Rudin1991,
  author    = {Rudin, Walter},
  title     = {Functional Analysis},
  edition   = {2},
  series    = {International Series in Pure and Applied Mathematics},
  publisher = {McGraw-Hill, Inc.},
  address   = {New York},
  year      = {1991},
  isbn      = {0-07-054236-8}
}

@article{Arjmand_2026,
   title={Dynamic resource allocation in eukaryotic {R}esource {B}alance {A}nalysis},
   volume={93},
   ISSN={1432-1416},
   url={http://dx.doi.org/10.1007/s00285-026-02436-9},
   DOI={10.1007/s00285-026-02436-9},
   number={1},
   journal={Journal of Mathematical Biology},
   publisher={Springer Science and Business Media LLC},
   author={Arjmand, Saeed Sadeghi},
   year={2026},
}

@book{Billingsley1999,
  author    = {Billingsley, Patrick},
  title     = {Convergence of Probability Measures},
  edition   = {2},
  series    = {Wiley Series in Probability and Statistics},
  publisher = {John Wiley \& Sons},
  address   = {New York},
  year      = {1999},
  doi       = {10.1002/9780470316962},
  isbn      = {978-0-471-19745-4}
}

\end{document}